\documentclass[a4paper]{amsart}
\usepackage{amsmath,amsthm, amscd, amssymb, amsfonts}
\usepackage[all]{xy}
\usepackage{leftidx}

\numberwithin{equation}{section}\theoremstyle{plain}

\newtheorem{theorem}{Theorem}[section]
\newtheorem{corollary}[theorem]{Corollary}

\newtheorem{proposition}[theorem]{Proposition}
\newtheorem{lemma}[theorem]{Lemma}

\theoremstyle{definition}
\newtheorem{definition}[theorem]{Definition}
\newtheorem{example}[theorem]{Example}

\theoremstyle{remark}
\newtheorem{remark}[theorem]{Remark}

\newcommand{\un}{\textbf{1}}

\newcommand{\C}{{\mathcal C}}
\newcommand{\D}{{\mathcal D}}

\newcommand{\Z}{{\mathcal Z}}

\newcommand{\TY}{\mathcal{TY}}

\newcommand\SL{\operatorname{SL}}
\newcommand{\G}{{\mathcal G}}
\newcommand{\E}{{\mathcal E}}

\newcommand{\cd}{\operatorname{cd}}

\newcommand{\Rep}{\operatorname{Rep}}

\newcommand{\KER}{\mathfrak{Ker}}

\newcommand\Aut{\operatorname{Aut}}
\newcommand\Irr{\operatorname{Irr}}
\newcommand\FPdim{\operatorname{FPdim}}

\newcommand\vect{\operatorname{Vec}}
\newcommand\svect{\operatorname{sVec}}
\newcommand\id{\operatorname{id}}

\newcommand\Infl{\operatorname{Infl}}

\newcommand\rev{\operatorname{rev}}

\newcommand\Hom{\operatorname{Hom}}

\newcommand\cs{\operatorname{cs}}

\newcommand\IP{\operatorname{IP}}

\begin{document}
\title[Frobenius-Perron graphs of an integral fusion category]{Graphs attached to simple Frobenius-Perron dimensions of an integral fusion category}
\author{Sonia Natale}
\author{Edwin Pacheco Rodr\' \i guez}
\address{Facultad de Matem\'atica, Astronom\'\i a y F\'\i sica.
Universidad Nacional de C\'ordoba. CIEM -- CONICET. Ciudad
Universitaria. (5000) C\'ordoba, Argentina}
\email{natale@famaf.unc.edu.ar, \emph{URL:}\/ http://www.famaf.unc.edu.ar/$\sim$natale}
\email{efpacheco@famaf.unc.edu.ar}
\thanks{The research of S. Natale is partially supported by  CONICET and SeCYT--UNC. The research of E. Pacheco Rodr\' \i guez is partially supported by  CONICET, SeCYT--UNC and ANPCyT--FONCYT}

\keywords{fusion category; Frobenius-Perron dimension; Frobenius-Perron graph; equivariantization; braided fusion category; modular category; solvability}

\subjclass[2010]{18D10; 05C25}

\date{November 14, 2014}

\begin{abstract} Let $\C$ be an integral fusion category. We study some graphs, called the prime graph and the common divisor graph, related to the Frobenius-Perron dimensions of simple objects in the category $\C$, that extend the corresponding graphs associated to the irreducible character degrees and the conjugacy class sizes of a finite group. We describe these graphs in several cases, among others, when $\C$ is an equivariantization under the action of a finite group, a $2$-step nilpotent fusion category, and the representation category of a twisted quantum double.
We prove generalizations of known results on the number of connected components of the corresponding graphs for finite groups in the context of braided fusion categories. In particular, we show that if $\C$ is any  integral non-degenerate braided fusion category, then the prime graph of $\C$ has at most $3$ connected components, and it has at most $2$ connected components if $\C$ is in addition solvable.
As an application we prove a classification result for weakly integral braided fusion categories  all of whose simple objects have prime power Frobenius-Perron dimension.
\end{abstract}

\maketitle

\section{Introduction}

Throughout this paper we shall work over an algebraically closed base field $k$ of characteristic zero.

Let $G$ be a finite group. Several graphs that can be attached to the the set $\cd(G)$ of irreducible character degrees of $G$ over $k$ and to the set $\cs (G)$ of conjugacy class sizes in $G$, have been intensively studied. The knowledge of these graphs provides important information on the structure of the group $G$. See for instance \cite{camina}, \cite{lewis}, \cite{manz-wolf} and references therein.

The graphs we are going to consider in this paper are mainly the prime graph $\Delta(G)$  and the related common divisor graph $\Gamma(G)$. These  graphs are defined as follows: the vertex set of $\Delta(G)$ is the set of prime numbers $p$ such that $p$ divides some irreducible character degree of $G$. Two vertices $p$ and $q$ are joined by an edge if the product $pq$ divides an irreducible character degree. On the other hand, the vertex set of the common divisor graph $\Gamma(G)$ is the set of nontrivial irreducible character degrees, and two vertices $a$ and $b$ are joined by an edge if $a$ and $b$ are not relatively prime.

Similar graphs can be attached to the set $\cs(G)$. The graphs thus obtained are denoted by $\Delta'(G)$ and $\Gamma'(G)$.

It is known that, for any finite group $G$, the graph $\Delta(G)$ has at most three connected components. In the case where  $G$ is solvable,  $\Delta(G)$ has at most two connected components; furthermore, if $\Delta(G)$ is connected, then its diameter is at most $3$, while if it is not connected, then each connected component is a complete graph \cite{manz-sw}, \cite{manz} \cite{manz-ww}.

On the other hand, the graph $\Delta^{\prime}(G)$ has at most two connected components for any group $G$, and it is not connected if and only if $G$ is a quasi-Frobenius group with abelian complement and kernel: in this case each connected component is a complete graph \cite{BHM}, \cite{kazarin}.

\medbreak A generalization of the notion of finite group is given by that of a \emph{fusion category}. A fusion category $\mathcal{C}$ over $k$ is a $k$-linear semisimple rigid tensor category with finitely many simple objects, finite dimensional Hom spaces such that the unit object $\un$ is simple. Thus, for instance, the category $\Rep G$ of finite-dimensional $k$-representations of a finite group $G$ is a fusion category over $k$.

Let $\C$ be an integral fusion category over $k$.
In this paper we study the prime graph $\Delta(\C)$ and the common divisor graph $\Gamma(\C)$ of $\C$, that we call the Frobenius-Perron graphs of $\C$. These are defined as the graphs associated with the set $\cd (\C)$ of Frobenius-Perron dimensions of simple objects of $\C$.

The graphs $\Delta(\C)$ and $\Gamma(\C)$ coincide with $\Delta(G)$ and $\Gamma(G)$, respectively, when  $\C$ is the category $\Rep G$ of finite-dimensional representations of $G$ over $k$. On the other hand, the prime graph of the category $\C$ of finite-dimensional representations of a twisted quantum double $D^{\omega}(G)$, $\omega \in H^3(G, k^*)$, appears related to the prime graph $\Delta'(G)$. In fact, $\Delta'(G)$ is always a subgraph of $\Delta(\Rep D^{\omega}(G))$, and we give some examples where these graphs coincide.

The classes of group-theoretical and weakly group-theoretical fusion categories were introduced in the papers \cite{ENO}, \cite{ENO2}, relying on certain notions of group extensions of fusion categories. A fusion category $\C$ is called group-theoretical if it is Morita equivalent to a pointed fusion
category, and it is called  weakly group-theoretical if it is Morita equivalent to a nilpotent fusion category. If $\C$ is moreover Morita equivalent to a cyclically nilpotent fusion category, then it is called solvable. We shall recall these definitions in Section \ref{fc}.

An important open question related to the classification of fusion categories is whether any fusion category whose Frobenius-Perron dimension is a natural integer is weakly group-theoretical \cite[Question 2]{ENO2}.

\medbreak Let $G$ be a finite group and let $\C$ be an integral fusion category endowed with an action of $G$ by tensor autoequivalences. We prove several results on the prime graph of the equivariantization $\C^G$. Under the assumption that the group $G$ is not abelian, we show that the graph $\Delta(\C^G)$ has at most three connected components (Theorem \ref{thm-leq3}), and it has at most two connected components if $G$ is solvable (Theorem \ref{equiv-solv}). In order to prove Theorems \ref{thm-leq3} and \ref{equiv-solv} we adapt some arguments used in the proofs of the corresponding results for the character degree graphs of a finite group in \cite[Theorem 18.4]{manz-wolf} and \cite[Proposition 2]{manz-sw}, respectively. An important tool in our proof is an analogue of a theorem of Gallagher for fusion categories that we establish in Proposition \ref{gallagher} (see also Corollary \ref{gallagher-exact}). Moreover, we show that if $\C$ is pointed then $\Delta(\C^G)$ has at most three connected components, for any group $G$ (Theorem \ref{eq-pted}). This is done by applying properties of another graph, called the IP-graph, introduced by Isaacs and Praeger in \cite{ipgraph}

\medbreak Among all fusion categories, a distinguished class is that of braided fusion categories and in particular, that of  non-degenerate braided fusion categories. These notions are recalled in Section \ref{braided}. Every non-degenerate fusion category with integral Frobenius-Perron dimension admits a canonical spherical structure that makes it a \emph{modular} category. This type of categories are of relevance in distinct areas of mathematics and mathematical physics. See for instance \cite{BK}, \cite{turaev-b}.

\medbreak As a consequence of the above mentioned results for equivariantizations, we obtain the following theorems in the context of integral braided fusion categories, which are analogues of the corresponding results for finite groups:

\begin{theorem}\label{main-nondeg-wgt} Let $\C$ be an integral non-degenerate fusion category.
Then we have:
\begin{enumerate}\item[(i)] The graph $\Delta(\C)$ has at most three connected components.
\item[(ii)] Suppose $\C$ is solvable. Then the graph $\Delta(\C)$ has at most two connected components.
\end{enumerate}
\end{theorem}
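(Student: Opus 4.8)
The plan is to derive both parts from the results on equivariantizations established above, by presenting $\C$ as an equivariantization $\D^{G}$ of a fusion category $\D$ under a finite group $G$ and then invoking Theorems \ref{thm-leq3}, \ref{eq-pted} and \ref{equiv-solv}. This is legitimate because the graph $\Delta(\C)$ depends only on the set $\cd(\C)$ of Frobenius--Perron dimensions of simple objects, which is unchanged under a fusion equivalence. As a preliminary reduction, if $\C$ is pointed then every simple object has Frobenius--Perron dimension $1$, so $\Delta(\C)$ has no vertices and both bounds hold vacuously; hence I may assume that $\C$ is not pointed.

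The key structural input is that a non-pointed integral non-degenerate braided fusion category contains a nontrivial Tannakian subcategory $\E\cong\Rep(G)$. Granting this, the function algebra $A=\mathcal{O}(G)$ is a connected \'etale algebra in $\C$, and the de-equivariantization $\D:=\C_{G}=\mathrm{mod}_{\C}(A)$ is a fusion category equipped with an action of $G$ satisfying $\D^{G}\cong\C$ as fusion categories; in particular $\Delta(\D^{G})=\Delta(\C)$. Note that although $\C$ is braided, $\D$ need only be regarded as a fusion category with $G$-action, which is exactly the setting of the equivariantization theorems.

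For part (i), I would argue according to the group $G$ that can be realized. If $\C$ admits a Tannakian subcategory $\Rep(G)$ with $G$ nonabelian, then applying Theorem \ref{thm-leq3} to $\C=\D^{G}$ shows that $\Delta(\C)$ has at most three connected components. If the available group is abelian, the aim is instead to choose $\E$ so that the de-equivariantization $\D$ becomes pointed --- for instance by taking $\E$ to be a Lagrangian subcategory when $\C$ is group-theoretical --- and then to conclude by Theorem \ref{eq-pted}, which bounds by three the number of components of the equivariantization of a pointed category under an arbitrary acting group. The content of the proof is therefore to show that one of these two situations can always be arranged.

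For part (ii), assume in addition that $\C$ is solvable. By the closure properties of solvable fusion categories, the equivariantization $\D^{G}\cong\C$ is solvable precisely when $\D$ and $G$ are both solvable; in particular the group $G$ occurring above is solvable, and solvability also guarantees the existence of the required nontrivial Tannakian subcategory, built from its cyclic subquotients, whenever $\C\neq\vect$. Applying Theorem \ref{equiv-solv} to $\C=\D^{G}$ with $G$ solvable then yields at most two connected components. The main obstacle, and the technical heart of the deduction, lies in part (i): establishing that every non-pointed integral non-degenerate braided fusion category admits a Tannakian subcategory whose associated group is nonabelian, or whose de-equivariantization is pointed. This is exactly the step where the hypotheses of integrality and non-degeneracy are indispensable, since they are what force enough Tannakian subcategories to exist and make the comparison with the group-theoretic graphs possible.
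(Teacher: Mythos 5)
Your treatment of one half of the problem coincides with the paper's: when $\C$ contains a Tannakian subcategory $\E\cong\Rep G$ with $G$ nonabelian, write $\C\cong(\C_G)^G$ and apply Theorem \ref{thm-leq3} for (i) and Theorem \ref{equiv-solv} for (ii), using that solvability of $\C$ passes to the subcategory $\Rep G$ and hence to $G$. The genuine gap is the complementary case, which you yourself flag as ``the technical heart'' and then never supply: you need that every non-pointed integral non-degenerate braided fusion category contains either a Tannakian subcategory with nonabelian group, or one whose de-equivariantization is pointed. Neither alternative is a known result, and the dichotomy is in fact untenable. Nothing forces an integral modular category to contain \emph{any} nontrivial Tannakian subcategory (the twists can be nontrivial on every nontrivial fusion subcategory), and a pointed de-equivariantization makes $\C$ an equivariantization of a pointed category, i.e.\ group-theoretical --- an extra hypothesis you cannot impose. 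The paper's own Example \ref{ej-36} illustrates the problem: $\C(\mathfrak{sl}_3,q,6)$ is non-group-theoretical, its invertible objects form $\mathbb Z_3$ (ruling out $\Rep S_3$ and every other nonabelian candidate except possibly $\Rep \mathbb A_4$, which would require its unique three-dimensional simple object to generate a symmetric subcategory with trivial twist), yet the theorem must still hold for it. Note also that for part (ii) abelian Tannakian subcategories are useless even when they exist, since Theorem \ref{equiv-solv} requires the acting group to be nonabelian; and Theorem \ref{eq-pted} is only available once group-theoreticity is known.

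What the paper does instead, and what your proposal is missing, is Proposition \ref{nd-connect}: if \emph{every} Tannakian subcategory of $\C$ is pointed (which covers both ``no nontrivial Tannakian subcategory'' and ``only abelian, pointed ones''), then $\Gamma(\C)$ has at most two connected components. This is proved by a genuinely different mechanism --- the Verlinde formula together with $S$-matrix vanishing (Lemma \ref{s-null}, Corollary \ref{cor-fpcoprime}) and the twist argument of Lemma \ref{fermionic} --- and via Remark \ref{graphs-sets} it settles both (i) and (ii) at once in that case. The paper's proof is then a clean dichotomy: either every Tannakian subcategory is pointed (Verlinde-formula case, at most two components), or there is a non-pointed Tannakian subcategory $\Rep G$, in which case $G$ is automatically nonabelian and your equivariantization argument applies. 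Without Proposition \ref{nd-connect} or a substitute for it, your proof cannot be completed.
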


Let us observe that integral non-degenerate categories which are solvable and such that $\Delta(\C)$ has two connected components do exist. See Examples \ref{ej-36} and \ref{ej-zty}.

\medbreak For general braided group-theoretical fusion categories, which are always integral, we show:

\begin{theorem}\label{main-braidedgt} Let $\C$ be a braided group-theoretical fusion category. Then we have:
\begin{enumerate}
\item[(i)] The graph $\Delta(\C)$ has at most three connected components.
\item[(ii)] If $\C$ is non-degenerate, then the graph  $\Delta(\C)$ has at most two connected components and its diameter is at most $3$. \qed
\end{enumerate} \end{theorem}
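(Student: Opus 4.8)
The plan is to reduce both parts to results already established in the paper, using de-equivariantization along the M\"uger center to handle degeneracy, and the realization of non-degenerate braided group-theoretical categories as twisted doubles. For part (i): since $\C$ is group-theoretical it is integral. If $\C$ is non-degenerate, Theorem \ref{main-nondeg-wgt}(i) applies verbatim and gives at most three components. Otherwise the M\"uger center $\C'$ is a nontrivial symmetric, hence (super-)Tannakian, subcategory; choose a maximal Tannakian subcategory $\Rep(\Gamma)\subseteq\C'$. Assuming $\Gamma\neq 1$, de-equivariantization along $\Rep(\Gamma)$ produces an integral braided fusion category $\C_\Gamma$ carrying an action of $\Gamma$ with $\C\cong(\C_\Gamma)^\Gamma$, and I would read the bound off the equivariantization theorems: for $\Gamma$ non-abelian Theorem \ref{thm-leq3} gives at most three components, while for $\Gamma$ abelian (hence solvable) Theorem \ref{equiv-solv} gives at most two. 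The sole remaining possibility is $\C'\cong\svect$, where $\Gamma$ is trivial but a single fermion survives; being invertible it contributes nothing to $\cd(\C)$, so $\Delta(\C)$ is governed by the non-degenerate data exactly as before.

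For part (ii) the decisive input is structural: a non-degenerate braided fusion category is group-theoretical precisely when it admits a Lagrangian subcategory, and in that case it is braided equivalent to the representation category $\Rep(D^\omega(G))$ of a twisted double of some finite group $G$, with $\omega\in H^3(G,k^*)$ (Drinfeld--Gelaki--Nikshych--Ostrik, Naidu--Nikshych). Hence $\Delta(\C)=\Delta(\Rep(D^\omega(G)))$, and I would invoke the description of the prime graph of a twisted double developed earlier in the paper. The simple objects of $\Rep(D^\omega(G))$ are the pairs $(C,\rho)$ with $C$ a conjugacy class of $G$ and $\rho$ an irreducible projective representation of the centralizer (with cocycle determined by $\omega$), of Frobenius-Perron dimension $|C|\dim\rho$; in particular the conjugacy class sizes occur in $\cd(\C)$, so that $\Delta'(G)$ is a subgraph of $\Delta(\C)$. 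Combining the fact that $\Delta'(G)$ has at most two components, each of diameter at most $3$, with the Isaacs--Praeger common divisor graph machinery already underlying Theorem \ref{eq-pted}, one derives the asserted bounds on $\Delta(\C)$.

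The hard part, I expect, lies in part (ii), in the passage from the subgraph $\Delta'(G)$ to the whole graph $\Delta(\Rep(D^\omega(G)))$. When $\omega$ is nontrivial the projective dimensions $\dim\rho$ may be divisible by primes dividing no conjugacy class size, so $\Delta(\C)$ can have strictly more vertices than $\Delta'(G)$, and a priori these extra primes could split off into new components or lengthen paths. Verifying that the full set $\{\,|C|\dim\rho\,\}$ still obeys the common divisor graph axioms, so that the Isaacs--Praeger bounds of at most two components and diameter at most $3$ persist, is the technical heart of the argument. A milder point, in part (i), is to confirm that braided de-equivariantization along $\Rep(\Gamma)$ preserves integrality and to dispose cleanly of the borderline case $\C'\cong\svect$.
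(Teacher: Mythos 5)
Your proposal diverges from the paper's proof and both halves contain genuine gaps. In part (i), your reduction through the M\"uger center breaks down in exactly the cases that forced the paper to prove Theorem \ref{eq-pted}. If the maximal Tannakian subcategory $\Rep \Gamma \subseteq \C'$ has $\Gamma$ abelian and nontrivial, you invoke Theorem \ref{equiv-solv}; but that theorem, like Theorem \ref{thm-leq3}, is stated (and proved) only for \emph{nonabelian} $G$ --- its proof hinges on $\pi(\cd G)\neq\emptyset$ and Gallagher-type arguments, which are vacuous for abelian $G$. No result in the paper bounds the components of $\Delta(\D^\Gamma)$ for abelian $\Gamma$ acting on a \emph{general} fusion category $\D$, and your de-equivariantization $\C_\Gamma$ need not be pointed, so Theorem \ref{eq-pted} is not available as a substitute. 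The slightly degenerate case $\C'\cong\svect$ is likewise unproved: there is then no nontrivial Tannakian subcategory of $\C'$ to de-equivariantize by, and ``governed by the non-degenerate data'' is not an argument. The paper sidesteps all of this using the result of Naidu--Nikshych--Witherspoon \cite[Theorem 7.2]{NNW}: a braided group-theoretical category contains a Tannakian subcategory $\Rep G$ (not necessarily inside $\C'$) whose de-equivariantization $\C_G$ is \emph{pointed}; then $\C\cong(\C_G)^G$ and Theorem \ref{eq-pted} --- whose entire purpose is to handle arbitrary, in particular abelian, $G$ via the Isaacs--Praeger IP-graph and Lemma \ref{group-homo} --- gives at most three components (together with Remark \ref{graphs-sets}).

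In part (ii), your structural input is false as stated: it is not true that a non-degenerate braided fusion category is group-theoretical precisely when it admits a Lagrangian subcategory. A non-degenerate pointed category of prime Frobenius--Perron dimension $p$ is group-theoretical, yet it has no Lagrangian subcategory (its dimension is not a perfect square) and is not equivalent to any $\Rep D^\omega(G)$, whose dimension is always $|G|^2$. So the identification $\Delta(\C)=\Delta(\Rep D^\omega(G))$ cannot be obtained this way in general. The paper's route fixes this: taking $\E\cong\Rep G$ Tannakian with $\C_G$ pointed, non-degeneracy yields a braided equivalence $\C\boxtimes(\C_G^0)^{\rev}\cong\Z(\C_G)\cong\Rep D^\omega(\Gamma)$, where the neutral component $\C_G^0$ is also pointed; since the pointed factor contributes only invertible objects, $\cd(\C)=\cd(\Rep D^\omega(\Gamma))$ and hence $\Delta(\C)=\Delta(D^\omega(\Gamma))$, \emph{without} claiming $\C$ itself is a twisted double. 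From there Theorem \ref{tqd} gives at most two components and diameter at most $3$. Finally, the ``technical heart'' you flag --- primes dividing projective representation dimensions but no conjugacy class size --- is not handled by Isaacs--Praeger machinery at all; it is exactly the content of the set $S_\omega$ and Propositions \ref{contain1} and \ref{connected1}, which are already packaged into Theorem \ref{tqd}, so once the reduction $\Delta(\C)=\Delta(D^\omega(\Gamma))$ is correctly established there is nothing left to verify.
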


The proofs of Theorems \ref{main-nondeg-wgt} and \ref{main-braidedgt} are given in Subsection \ref{pfs}. Theorem \ref{main-braidedgt} follows from a description of the graphs of the twisted quantum double of a finite group (Theorem \ref{tqd}) and that of an equivariantization of a pointed fusion category (Theorem \ref{eq-pted}).  The proof of Theorem \ref{main-nondeg-wgt} relies on a study of Tannakian subcategories of braided fusion categories and its connection with the equivariantization construction. We make use in the proofs the fact that the Frobenius-Perron graph of an integral non-degenerate braided fusion category without non-pointed Tannakian subcategories has at most two connected components. See Proposition \ref{nd-connect}. This result follows from an application of the  Verlinde formula for modular categories.

\medbreak Let $G$ be a finite group and assume that all irreducible degrees of $G$ are prime powers.  A result of  Willems \cite{willems} says that $G$ must be solvable,  unless $G \cong S \times A$ where
$A$ is an abelian group and $S$ is one of the groups $\mathbb A_5$ or $\SL(2,8)$. An alternative proof is given in \cite{manz-sw} as a consequence of the fact that the prime graph of $G$ has at most three connected components.

The connected components of the graph $\Delta(G)$ are $\{2\}, \{3\}, \{5\}$, if $G =\mathbb A_5$, and  $\{2\}, \{3\}, \{7\}$, if $G = \SL(2, 8)$.

\medbreak Let $\C$ be a braided fusion category such that $\FPdim \C \in \mathbb Z$. It was shown in \cite[Theorem 7.2]{witt-wgt} that if there exists a prime $p$ such that the Frobenius-Perron dimension of every simple object of $\C$ is a power of $p$, then $\C$ is solvable. In the case where $\C$ is integral, this can be rephrased in terms of the prime graph of $\C$ saying that if $\Delta(\C)$ consists of a single isolated vertex, then $\C$ is solvable.

As an application of the main theorems of this paper, we prove in Section \ref{application} the following generalization of these results:

\begin{theorem}\label{appl} Let $\C$ be a braided fusion category such that $\FPdim \C \in \mathbb Z$ and let $p_1, \dots, p_r$ be prime numbers. Suppose that the Frobenius-Perron dimensions of any simple object of $\C$ is a $p_i$-power, for some $1 \leq i \leq r$. Then $\C$ is weakly group-theoretical.

Assume in addition that one of the following conditions is satisfied:
\begin{enumerate}
\item[(a)] $r \leq 2$, or
\item[(b)] $p_i > 7$, for all $i = 1, \dots, r$.
\end{enumerate} Then $\C$ is solvable.  \end{theorem}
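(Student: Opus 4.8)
The plan is to reduce to the non-degenerate case by induction on $\FPdim\C$, and then to combine the component bounds of Theorem \ref{main-nondeg-wgt} with the single-prime solvability result of \cite[Theorem 7.2]{witt-wgt} and a Willems-type dichotomy for the finite groups that arise. First, since every simple object of $\C$ has Frobenius--Perron dimension a prime power, in particular a positive integer, $\C$ is integral; moreover no simple dimension is divisible by two distinct primes, so $\Delta(\C)$ has no edges, its vertex set lies in $\{p_1,\dots,p_r\}$, and it is a disjoint union of at most $r$ isolated vertices (at most two under hypothesis (a)). Throughout I would use that the classes of weakly group-theoretical and of solvable fusion categories are stable under equivariantization (by finite, resp. solvable, groups), de-equivariantization, Deligne products, subcategories and condensation by connected \'etale algebras.

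Next I would carry out the reduction. Let $\C'$ be the M\"uger center; being integral and symmetric it contains a maximal Tannakian subcategory $\Rep G$, whose simple objects are simple objects of $\C$, so the irreducible degrees of $G$ are prime powers with primes among $p_1,\dots,p_r$. If $G\neq 1$, de-equivariantize: $\C_G$ is braided integral with $\FPdim\C_G=\FPdim\C/|G|<\FPdim\C$, and by the Gallagher-type formula of Proposition \ref{gallagher} each simple dimension of $\C_G$ divides one of $\C$, hence is again a prime power. By induction $\C_G$ is weakly group-theoretical (resp. solvable); since $\C=(\C_G)^G$, weak group-theoreticity follows because $G$ is a finite group, while for solvability one needs $G$ solvable. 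This is exactly where the hypotheses enter: by Willems' theorem \cite{willems} a group with prime-power irreducible degrees is solvable unless it is $S\times A$ with $S\in\{\mathbb{A}_5,\SL(2,8)\}$, whose degree-primes are $\{2,3,5\}$ and $\{2,3,7\}$; either hypothesis (a) (at most two primes) or (b) (all primes $>7$) excludes these, so $G$ is solvable and $\C=(\C_G)^G$ is solvable. The case $\C'=\svect$ (a single fermion) is treated the same way on the associated slightly degenerate category, leaving us with $\C$ non-degenerate.

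For the non-degenerate core, fix the canonical spherical (modular) structure and use M\"uger's decomposition $\C\simeq\C_1\boxtimes\cdots\boxtimes\C_n$ into prime modular factors, each inheriting prime-power simple dimensions. Since a simple dimension of $\C$ is a product of simple dimensions of the factors, if two distinct factors were non-pointed with nontrivial simples of dimensions $p^a$ and $q^b$ ($a,b\ge 1$) then $p^aq^b$ would be a non-prime-power dimension; hence either all non-pointed factors use one common prime $p$ (so $\C$ is a single-prime category and is solvable by \cite[Theorem 7.2]{witt-wgt}), or at most one factor $\C_1$ is non-pointed. In the latter case the remaining factors are pointed, hence solvable, and one is reduced to a single prime modular category $\C_1$ with prime-power dimensions. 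If only one prime occurs there, \cite[Theorem 7.2]{witt-wgt} again applies; otherwise Theorem \ref{main-nondeg-wgt} bounds the number of primes by three and Proposition \ref{nd-connect} forces a nontrivial Tannakian subcategory $\Rep H\subseteq\C_1$. Taking $\M=C_{\C_1}(\Rep H)$, the double-centralizer theorem shows $\Rep H$ is the M\"uger center of $\M$, so condensing the canonical connected \'etale algebra of $\Rep H$ (equivalently de-equivariantizing $\M$ by $H$ and passing to local modules) yields a modular category of strictly smaller Frobenius--Perron dimension with prime-power simple dimensions, to which the induction applies; solvability and weak group-theoreticity are transported back along these steps, with $H$ controlled by Willems under (a) or (b).

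The main obstacle is precisely this last, non-degenerate step: because the M\"uger center is now trivial one cannot de-equivariantize $\C$ directly, so everything rests on producing a usable Tannakian subcategory from the disconnectedness of $\Delta(\C)$ and on transporting solvability correctly through the \'etale-algebra condensation. This is where the Verlinde-formula input of Proposition \ref{nd-connect} and the component bound of Theorem \ref{main-nondeg-wgt} do the genuine work; the surrounding bookkeeping is only to check that every finite group entering the construction has prime-power irreducible degrees, so that Willems' dichotomy applies and the exceptions $\mathbb{A}_5$ and $\SL(2,8)$ are ruled out exactly by conditions (a) and (b).
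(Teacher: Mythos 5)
Your reduction through the M\"uger center stalls at the case $\C' \cong \svect$, which you dismiss with ``treated the same way on the associated slightly degenerate category.'' It cannot be treated the same way: when the M\"uger center is $\svect$ there is no Tannakian subcategory \emph{of the center} to de-equivariantize by, and fermionic condensation leaves the world of fusion categories. This case needs a genuine extra input, which in the paper is \cite[Proposition 7.4]{ENO2}: a slightly degenerate category containing a simple object of odd prime-power dimension $>1$ must contain a nontrivial Tannakian subcategory (not central in general). One then continues not via a braided de-equivariantization but via the $G$-crossed de-equivariantization, inducting on its braided \emph{neutral component} $\C_G^0$ and transporting solvability/weak group-theoreticity back by \cite[Proposition 4.1]{witt-wgt}. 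Without this, your claim ``leaving us with $\C$ non-degenerate'' is unjustified.

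The second gap is in the non-degenerate core. Your use of M\"uger's prime decomposition is a nice observation the paper does not make: if two distinct prime factors are non-pointed, all simple dimensions are powers of one prime and \cite[Theorem 7.2]{witt-wgt} finishes. But when a single non-pointed prime factor $\C_1$ carries exactly \emph{two} primes, your tools give nothing: \cite[Theorem 7.2]{witt-wgt} needs one prime, and Proposition \ref{nd-connect} \emph{allows} two connected components, so it does not ``force a nontrivial Tannakian subcategory'' --- it only does so when there are at least three. This two-prime situation is precisely the critical case under hypothesis (a) with $r=2$, and it genuinely occurs: Example \ref{ej-36}, the category $\C(\mathfrak{sl}_3,q,6)$ of type $(1,3;2,6;3,1)$, is integral, non-degenerate, with graph $\{2\}\cup\{3\}$ and nontrivial $\C_{pt}$. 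What is missing is the paper's adjoint reduction, which you skipped entirely: if $\C_{ad}\subsetneq\C$ one inducts on $\C_{ad}$, since $\C$ is an extension of it by the abelian group $U(\C)$; and if $\C=\C_{ad}$ then $\C_{pt}\subseteq\C'=\vect$, so the \emph{stronger} Proposition \ref{npt-connected} (connectedness of $\Gamma(\C)$ when $\C_{pt}=\vect$ and there are no nontrivial Tannakian subcategories) yields the contradiction with having two isolated vertices. Proposition \ref{nd-connect} alone cannot substitute for this. (A minor point: the fact that simple dimensions of a de-equivariantization divide those of $\C$ comes from formula \eqref{dim-equiv}, i.e.\ \cite[Corollary 2.13]{fusionrules-equiv}, not from Proposition \ref{gallagher}, which concerns simplicity of certain tensor products.)
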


\medbreak The paper is organized as follows. In Section \ref{preliminaries} we recall the definitions and some of the main properties of the graphs associated to irreducible character degrees and conjugacy class sizes of finite groups. Another graph associated to transitive actions of groups, called the IP-graph, is also discussed in this section.

In Section \ref{fc} we give an account of the relevant definitions and several features of fusion categories that will be needed in the rest of the paper. In particular, we discuss in this section the notions of group extensions and equivariantizations, and the notions of exact sequence of fusion categories and its relation with the equivariantization under a finite group action.
Notions and results about braided fusion categories are discussed later on at the beginning of Section \ref{braided}.

In Section \ref{fp-graph} we introduce the Frobenius-Perron graphs of an integral fusion category and
give some examples. The  Frobenius-Perron graph of some classes of graded extensions, namely, $2$-step nilpotent fusion categories and braided nilpotent fusion categories, are described in Section \ref{fp-extensions}.

In Section \ref{s-gallagher} we prove the mentioned analogue of Gallagher's Theorem. After that we prove in Section \ref{graph-equiv} our main results on the Frobenius-Perron graphs of equivariantizations of fusion categories. Our results concern equivariantizations under the action of a non-abelian group and equivariantizations of pointed fusion categories.
The graph of the representation category of a twisted quantum double and its relation with the graphs $\Delta(G)$ and $\Delta'(G)$ of a finite group $G$ are described in Section \ref{s-tqd}. Section \ref{braided} is devoted to the proof of the main results on the Frobenius-Perron graphs of braided fusion categories. The last section contains the applications of the main results to the classification of certain braided fusion categories.

\subsection*{Acknowledgement} The research of S. Natale was done in part during a stay at the Erwin Schr\" odinger Institute, Vienna, in the frame of the Programme 'Modern Trends in Topological Quantum Field Theory' in February 2014. She thanks the ESI and the organizers of the Programme for the support and kind hospitality.

\section{Some graphs associated to a finite group}\label{preliminaries}

Throughout this section $G$ will be a finite group. We shall recall here the definitions and main properties of some graphs associated to $G$.

\medbreak Let $n$ be a natural number. We shall use the notation $\pi(n)$ to indicate the set of prime divisors of $n$. If $X$ is a set of positive integers, the notation $\pi(X)$ will indicate the set of prime numbers $p$ such that $p$ divides an element of $X$.

\medbreak All graphs considered in this paper are non-oriented graphs. For a graph $\G$, the vertex set of $\G$ will be denoted by $\vartheta(\G)$. Let  $x, y$ be two vertices connected by a path in the graph $\G$. The \emph{distance} between $x$ and $y$ will be denoted by $d(x, y)$; by definition, $d(x, y)$ is the minimum length of a path in $\G$ connecting $x$ and $y$. The \emph{diameter} of $\G$ is the maximum among all distances between vertices in $\G$.

\begin{definition}  Let $S$ be a set of positive integers. The \emph{prime vertex graph} $\Delta(S)$ and the \emph{common divisor graph} $\Gamma (S)$ of $S$ are the graphs defined as follows:

\medbreak The graph $\Delta(S)$ has as vertex set the set $\pi(S)$. Two vertices $p$ and $q$ are joined by an edge if and only if exist $a \in S$ such that $pq$ divides $a$.

\medbreak The graph $\Gamma(S)$ has vertex set $S - \{1\}$. Two vertices $a$ and $b$ are joined by an edge if and only if $a$ and $b$ are not coprime. \end{definition}

\begin{remark}\label{graphs-sets} We have that the number of connected components of $\Gamma(S)$ is equal to the number of connected components of $\Delta(S)$; see for instance \cite[Corollary 3.2]{lewis}.
\end{remark}

\subsection{The graph $\Delta(G)$}\label{prime-G}  Let $\cd(G)$ be the set of irreducible character degrees of the group $G$, that is,  $$\cd(G) = \{\chi(1) : \chi \in \Irr(G)\}.$$ The set $\pi(\cd(G))$ is thus the set of prime numbers $p$ such that $p$ divides $\chi(1)$ for some $\chi \in \Irr(G)$.
We shall use the notation $\Delta(G)$ to indicate the prime vertex graph of the set $\cd(G)$.
The relations between the structure of the graph $\Delta(G)$ and the structure of $G$ have been studied extensively. See for instance \cite[Chapter V]{manz-wolf}, \cite{lewis} and references therein.

In the following theorem, we summarize some results due to Manz, Staszewski and Willems \cite{manz-sw}, Manz \cite{manz} and Manz, Willems and Wolf \cite{manz-ww}; see \cite[Corollary 4.2 and Theorem 6.4]{lewis}:

\begin{theorem}\label{ppties-graphg-1} Let $G$ be a finite group. Then the following hold:
 \begin{enumerate}
  \item[(i)] The graph $\Delta(G)$ has at most three connected components.
       \end{enumerate}
Suppose that the group $G$ is solvable. Then we have:
 \begin{enumerate}
  \item[(ii)] The graph $\Delta(G)$ has at most two connected components.
  \item[(iii)] If $\Delta(G)$ is connected, then its diameter is at most $3$. \qed
       \end{enumerate}
\end{theorem}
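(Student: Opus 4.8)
The plan is to treat the three assertions by separate structural analyses of how the primes in $\pi(\cd(G))$ distribute across the degrees in $\cd(G)$; throughout I may pass freely between $\Delta(G)$ and $\Gamma(G)$ using Remark~\ref{graphs-sets}. For part (ii) the key input is Pálfy's \emph{three primes condition}: if $G$ is solvable and $p,q,r \in \pi(\cd(G))$ are pairwise distinct, then at least one of the products $pq$, $pr$, $qr$ divides some element of $\cd(G)$; equivalently, among any three vertices of $\Delta(G)$ at least two are joined by an edge. Granting this, (ii) is immediate: if $\Delta(G)$ had three or more connected components one could select a single vertex from each of three distinct components and obtain three pairwise non-adjacent primes, contradicting the condition. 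To prove the condition itself I would argue by minimal counterexample. Choosing $G$ of least order admitting non-adjacent $p,q,r$, I would first reduce to the case $O_{\{p,q,r\}'}(G)=1$ by quotienting out the corresponding normal subgroup (which does not affect the relevant degrees, by Clifford theory and the standard handling of characters of quotients), and then examine the Fitting subgroup and a minimal normal subgroup $N$. The heart of the argument is module-theoretic: analyzing the coprime action of suitable sections on abelian chief factors and counting orbit sizes of linear characters forces a product of two of the three primes to occur as a constituent of an induced character degree, contradicting non-adjacency.

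For part (iii) I would refine the same circle of ideas. Assuming $G$ solvable and $\Delta(G)$ connected, the goal is to show that any two vertices lie at distance at most $3$. Using the Fitting series together with Clifford theory one locates, for each prime $p\in\pi(\cd(G))$, a degree divisible by $p$ and a prime attached either to $|G/F(G)|$ or to the action on a chief factor; tracking these attachments shows that every vertex is within distance one or two of a fixed distinguished set of primes, whence two arbitrary vertices are within distance $3$. This step requires sharper bookkeeping of which primes can simultaneously divide a single degree than part (ii) alone supplies.

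Part (i), the bound of three components for arbitrary $G$, is the genuinely hard case, and I would reduce it to the solvable statement together with the classification of finite simple groups. The strategy is again a minimal counterexample combined with Clifford theory to transfer information between a minimal normal subgroup, its centralizer, and the corresponding quotient: the solvable result supplies the base bound of two, and the point to be established is that the presence of a non-abelian composition factor can raise the component count by at most one. Establishing this rests on the classification: one checks, simple group by simple group, that the prime sets occurring among the character degrees of a non-abelian simple group and its almost-simple extensions are sufficiently interconnected, the extremal cases being precisely $\Aa_5$ and $\SL(2,8)$, which exhibit three components and show the bound to be sharp.

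I expect the main obstacle to be twofold. In the solvable range the difficulty is concentrated in the orbit-counting module theory underlying Pálfy's condition, where one must rule out configurations in which a solvable linear group acts on an abelian group so as to avoid producing any of the three pairwise products. In the general range the essential obstacle is the unavoidable reliance on the classification of finite simple groups: the sharp examples $\Aa_5$ and $\SL(2,8)$ reflect specific arithmetic of simple-group character tables, so no purely elementary argument can be expected.
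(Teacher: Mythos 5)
First, a point of comparison: the paper does not prove this theorem at all. It is quoted as a summary of known results, with citations to \cite{manz-sw}, \cite{manz} and \cite{manz-ww} (see also \cite[Corollary 4.2 and Theorem 6.4]{lewis}), and the paper explicitly notes that part (i) rests on the classification of finite simple groups. So there is no internal argument to measure your attempt against; what you have written is a reconstruction, at the level of strategy, of the proofs in that literature. As a roadmap it is largely faithful: the deduction of (ii) from P\'alfy's three-primes condition is logically correct (three components would yield three pairwise non-adjacent vertices), the identification of $\Aa_5$ and $\SL(2,8)$ as the extremal cases for (i) is right, and the reliance on the classification for (i) is unavoidable, as the paper itself remarks. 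One historical caveat: part (ii) is due to Manz and predates P\'alfy's condition by over a decade, so you are deriving it from a strictly stronger and later theorem; legitimate, but worth flagging, and note that P\'alfy's condition also yields the completeness of components of a disconnected $\Delta(G)$, which the paper attributes to P\'alfy as a separate statement.

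Judged as a proof, however, your proposal has genuine gaps at precisely the points where these theorems are hard, and you should be explicit that what you have is an outline. For (ii), the entire mathematical content lies in the sentence ``analyzing the coprime action of suitable sections on abelian chief factors and counting orbit sizes of linear characters forces a product of two of the three primes to occur'': ruling out the configurations in which a solvable linear group acts on a module without producing any of the three products is the whole of P\'alfy's (or Manz's) paper, and nothing in your sketch indicates how to carry it out. For (iii), ``sharper bookkeeping'' of distances to ``a fixed distinguished set of primes'' names neither the set nor the mechanism; the Manz--Willems--Wolf argument is a delicate induction, not a refinement one gets for free from (ii). For (i), the reduction you propose --- that a nonabelian composition factor ``can raise the component count by at most one'' over the solvable bound of two --- is not a statement that follows formally from the solvable case plus a check of simple groups, and it is not how the actual proof is organized; note that $\Delta(\Aa_5)$ already has three components with $G$ itself simple. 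In the literature (and in the paper's own adaptation of these arguments in its Theorem \ref{thm-leq3}), the classification enters through Michler's theorem \cite{michler} (if $p$ divides no irreducible degree of $G$, then $G$ has a normal abelian Sylow $p$-subgroup), which is combined with an induction on $|G|$ and Clifford-theoretic arguments; your sketch would need to be reworked around an input of that kind rather than a group-by-group interconnectedness check of simple-group degree sets.
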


We point out that the proof of part (i) of the theorem given in \cite{manz-sw} relies on the classification of finite simple groups.

When $G$ is solvable, a result of P\' alfy asserts moreover that if the graph $\Delta(G)$ is not connected, then each connected component is a complete graph.

\subsection{The graph $\Delta^{\prime}(G)$}  Let $\cs(G)$ be the set $$\cs(G) = \{|C| : C \textrm{ is a conjugacy class of  } G\}.$$

The prime graph and the common divisor graph of the set $\cs(G)$ will be  denoted, respectively, by $\Delta^{\prime}(\emph{G})$ and $\Gamma'(G)$.
In this paper we shall use some important properties of the graph $\Delta'(G)$, that we list in Theorem \ref{ppties-graphg} below. These results can be found in the surveys \cite{camina} and \cite{lewis}.



\begin{remark} The vertex set of the graph $\Delta(G)$ is contained in the vertex set of the graph $\Delta'(G)$, in other words, $\pi(\cd(G)) \subseteq \pi(\cs(G))$.

\medbreak Indeed, if $p$ is a prime divisor of the order of $G$, the condition $p \notin \pi(\cs G)$ is equivalent to the existence of a central Sylow $p$-subgroup in $G$. (Suppose $p$ does not divide  $|^Ga|$ for all $a \in G$, then $G$ has a central Sylow $p$-subgroup $S_p$ by \cite[Corollary 4]{camina}. The other implication follows from the fact that if $G$ has a central Sylow $p$-subgroup $S_p$, then $S_p$ is an abelian direct factor of $G$ and therefore  $\cs(G) = \cs(G/S_p)$. But the numbers in $\cs(G/S_p)$ all divide the order of $G/S_p$, which is relatively prime to $p$.)

Assume $p \notin \pi(\cs G)$. Since $G$ has a central Sylow $p$-subgroup then, by a result of Ito \cite{ito}, the degree of any irreducible character divides the index $[G: S_p]$ and therefore it is not divisible by  $p$.  Hence $\pi(\cd(G)) \subseteq \pi(\cs(G))$, as claimed.

\medbreak Let us observe that the condition $p \notin \pi(\cd G)$ is equivalent to the existence of a normal abelian Sylow $p$-subgroup, in view of Ito's theorem and a result of Michler \cite[Theorem 5.4]{michler}.

\medbreak A result of Dolfi establishes that, when $G$ is a solvable group, $\Delta(G)$ is in fact a subgraph of $\Delta'(G)$. See \cite[Theorem 8.4]{lewis}.
\end{remark}

\medbreak Recall that a group $G$ is called a  \emph{Frobenius} group with \emph{Frobenius complement} $H$ if there exists a proper subgroup $H \subseteq G$ such that $H \cap {}^g\!H = \{ e\}$ for all $g \in G\backslash H$. If $G$ is a Frobenius group with Frobenius complement $H$, then $G$ is a semi-direct product $G = N\rtimes H$, where the normal subgroup $N \subseteq G$, which is uniquely determined by $H$, is called the \emph{Frobenius kernel} of $G$. If $G$ is a Frobenius group with complement $H$ and kernel $N$, then the orders of $H$ and $N$ are relatively prime.

More generally, a group $G$ is called a \emph{quasi-Frobenius} group with Frobenius kernel $\tilde N \subseteq G$ and Frobenius complement $\tilde H \subseteq G$, if  $G/Z(G)$ is a Frobenius group with kernel $\tilde N/Z(G)$ and complement $\tilde H/Z(G)$.

\medbreak The results listed in the following theorem are due to  Bertram, Herzog and Mann, Kazarin, Casolo and Dolfi and Alfandary \cite{BHM}, \cite{kazarin}, \cite{casolo-dolfi}, \cite{alfandary}. See \cite[Section 8]{lewis}.

\begin{theorem}\label{ppties-graphg} Let $G$ be a finite group. Then the following hold:
 \begin{enumerate}
  \item[(i)] The graph $\Delta^{\prime}(G)$ has at most two connected components.
  \item[(ii)] The graph $\Delta^{\prime}(G)$ has two connected components if and only if $G$ is a quasi-Frobenius group with abelian complement and kernel. In this case each connected component is a complete graph.
  \item[(iii)] If the group $G$ is not solvable, then the graph $\Delta^{\prime}(G)$ is connected and its diameter is at most two.
  \item[(iv)] If the graph $\Delta^{\prime}(G)$ is connected, then its diameter is at most three.
\qed       \end{enumerate}
\end{theorem}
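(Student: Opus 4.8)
The plan is to count connected components via the common divisor graph $\Gamma'(G)$ and to extract everything from a single structural dichotomy together with two direct computations. Recall that the size of the conjugacy class of $g \in G$ equals the index $|G : C_G(g)|$, so the entire analysis is governed by how centralizers sit inside $G$. By Remark \ref{graphs-sets} the graphs $\Delta'(G)$ and $\Gamma'(G)$ have the same number of connected components, so for parts (i)--(iii) I may argue with whichever is more convenient. The basic arithmetic input is the elementary lemma that if two elements $x, y$ have coprime class sizes, i.e. $\gcd(|G:C_G(x)|, |G:C_G(y)|) = 1$, then $G = C_G(x) C_G(y)$: indeed $|G : C_G(x) \cap C_G(y)|$ is divisible by both indices, hence by their product since they are coprime, while the set product $C_G(x)C_G(y)$ cannot exceed $|G|$ in cardinality, forcing $|C_G(x)C_G(y)| = |G|$.

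First I would dispose of the easy direction of (ii) and the completeness assertion by direct computation. Suppose $G/Z(G)$ is Frobenius with abelian kernel $\bar N = N/Z(G)$ and abelian complement $\bar H = H/Z(G)$. Since the center contributes nothing to class sizes, it suffices to read off centralizers modulo $Z(G)$: a noncentral element lying over $\bar N$ has centralizer $N$ (the complement acts without fixed points), so its class size is $|G:N| = |\bar H|$; a noncentral element lying over $\bar H$ has centralizer a conjugate of $H$, so its class size is $|\bar N|$. Because $\gcd(|\bar N|, |\bar H|) = 1$ in a Frobenius group, every class size is either a $\pi(\bar N)$-number or a $\pi(\bar H)$-number, and the two prime sets are disjoint. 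Hence $\Delta'(G)$ splits into the two pieces $\pi(\bar N)$ and $\pi(\bar H)$; moreover each is complete, since any two primes in $\pi(\bar N)$ both divide the single class size $|\bar N|$, and likewise for $\pi(\bar H)$.

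The heart of the matter is the converse structural theorem: \emph{if $\Delta'(G)$ is disconnected, then $G$ is quasi-Frobenius with abelian complement and kernel}. Granting this, parts (i)--(iii) follow at once. Such a quasi-Frobenius group is solvable (it is a central extension of a metabelian group), so a non-solvable group cannot be disconnected, which gives the connectivity asserted in (iii); and the computation above shows that a disconnected graph has exactly two complete components, which yields (i) and the remaining content of (ii). To prove the structural theorem I would begin from a partition of $\pi(\cs(G))$ into nonempty sets $\pi_1, \pi_2$ with no class size divisible by primes from both, let $K$ be generated by the elements whose class size is a $\pi_1$-number, and exploit the coprimeness lemma $G = C_G(x)C_G(y)$ to force, modulo $Z(G)$, a Frobenius factorization $G/Z(G) = (K/Z(G)) \rtimes (H/Z(G))$ with both factors abelian and the action fixed-point-free.

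This last step is exactly where the original arguments of Bertram--Herzog--Mann, Kazarin and Casolo--Dolfi become delicate, and it is the step I expect to be the main obstacle; the two computations above are, by comparison, routine. The sharp diameter bounds — diameter at most $2$ in the non-solvable case of (iii) and at most $3$ for a connected graph in (iv) — require a finer and largely independent arithmetic analysis of how any two primes in the same component are linked through a chain of class sizes, and it is precisely here (together with the non-solvable case) that the proofs in the literature invoke the classification of finite simple groups.
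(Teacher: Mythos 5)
Two points of comparison. First, the paper itself does not prove this theorem: it is stated as a summary of known results, with the attribution to \cite{BHM}, \cite{kazarin}, \cite{casolo-dolfi}, \cite{alfandary} (via \cite{lewis}) in the preceding sentence and the statement closed without a proof. Your proposal ultimately defers to the literature for exactly the same things: the converse direction of (ii) (disconnected $\Rightarrow$ quasi-Frobenius with abelian kernel and complement), the diameter bound in (iv), and the non-solvable case (iii). But those deferred items are essentially the entire content of the theorem, so as a proof your proposal is incomplete; what you actually supply is the coprimality lemma (correct), the logical deduction of (i) and of (iii)-connectivity from the unproved structural theorem (correct as logic), and one ``routine'' computation --- and that computation contains a genuine error. (A side remark: the paper invokes the classification of finite simple groups only for the character-degree graph, Theorem \ref{ppties-graphg-1}(i), and for Michler's theorem; it makes no such claim for the class-size results quoted here.)

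The error is in the hypothesis of your ``easy direction''. You read ``abelian complement and kernel'' as saying that $\bar N = N/Z(G)$ and $\bar H = H/Z(G)$ are abelian, and you then assert that ``the center contributes nothing to class sizes'' and that a noncentral element lying over $\bar N$ has centralizer exactly $N$. Both assertions fail at that level of generality. Take $G = \SL(2,3)$: then $G/Z(G) \cong \mathbb A_4$ is Frobenius with abelian kernel $V_4$ and abelian complement $\mathbb Z_3$, yet $\cs(\SL(2,3)) = \{1,4,6\}$ while $\cs(\mathbb A_4) = \{1,3,4\}$; an element $i$ of order $4$ in the kernel $Q_8$ has centralizer $\langle i \rangle$ of order $4$, not $Q_8$; and $\Delta'(\SL(2,3))$ is connected (the class size $6$ joins $2$ and $3$) although $\Delta'(\mathbb A_4)$ is disconnected. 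So under your reading the ``if'' direction of (ii) is false, and the theorem itself would be false. The intended hypothesis --- the one used in the remark that the paper places right after this theorem --- is that the subgroups $\tilde N, \tilde H \subseteq G$ themselves (the preimages in $G$ of the Frobenius kernel and complement of $G/Z(G)$) are abelian. With that hypothesis your computation does go through: abelianity of $\tilde N$ gives $\tilde N \subseteq C_G(g)$ for noncentral $g \in \tilde N$, while projecting to $G/Z(G)$ gives $C_G(g) \subseteq \tilde N$, so $C_G(g) = \tilde N$ exactly, and similarly $C_G(g)$ is a conjugate of $\tilde H$ for the remaining noncentral elements; hence $\cs(G) = \{1, n, m\}$ with $(n,m) = 1$, and the two components are the complete graphs on $\pi(n)$ and $\pi(m)$, as in the paper's remark.
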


\begin{remark} Let $G$ be a quasi-Frobenius group with abelian kernel $\tilde N$ and abelian complement $\tilde H$.
The conjugacy class sizes of $G$ are the same as those of the Frobenius group $G/Z(G)$, that is, $\cs(G) = \cs(G/Z(G))$. Therefore $\Delta'(G) = \Delta'(G/Z(G))$.

Let $n$ and $m$ be the orders of the groups $N = \tilde N/Z(G)$ and $H =\tilde H/Z(G)$, respectively, so that $(n, m) = 1$. If $a \in G$, then we have $|^Ga| = n$ or $m$; see \cite[Section 2]{camina}. Therefore in this case the two connected components of graph $\Delta^{\prime}(G)$ are the complete graphs on the vertex sets $\pi(n)$ and $\pi(m)$, respectively.
\end{remark}

\subsection{The graph $\Gamma'(G)$ and the $\IP$-graph}

We shall consider in this subsection a generalization of the graph $\Gamma'(G)$ introduced by Isaacs and Praeger in \cite{ipgraph} and known as the $\IP$-graph.

Let $G$ be a group acting transitively on a set $\Omega$. Consider an element $\alpha\in\Omega$ and let $G_{\alpha}$ be its stabilizer subgroup. The subgroup $G_{\alpha}$ acts on the set $\Omega$ by restriction of the original action.

\medbreak The subdegrees of $(G,\Omega)$ are the cardinalities of the orbits of the action of a stabilizer $G_{\alpha}$, $\alpha \in \Omega$, on the set $\Omega$.

The set of subdegrees of $(G,\Omega)$ is denoted by $D =D(G,\Omega)$. Because of the transitivity of the original action, the set $D$ is well-defined, independently of the choice of $\alpha\in\Omega$.

\begin{definition}[\cite{ipgraph}] Suppose that all subdegrees of $(G,\Omega)$ are finite. Then the \emph{$\IP$-graph} of $(G,\Omega)$ is the common divisor graph $\Gamma(D)$ of $D$.
\end{definition}

The main properties of the $\IP$-graph are summarized in the following theorem. See \cite[Theorem A and Theorem C]{ipgraph}.

\begin{theorem}\label{iptheorem} Let $G$ be a group. Suppose that $G$ acts transitively on the set $\Omega$ and that all subdegrees are finite. Then the $\IP$-graph of $(G,\Omega)$ has at most two connected components. Moreover, the following hold:
    \begin{enumerate}
     \item If the $\IP$-graph has just one connected component, then this component has diameter at most 4.
     \item If the $\IP$-graph has two connected components, one of these is a complete graph and the other has diameter at most 2.
    \end{enumerate}
\end{theorem}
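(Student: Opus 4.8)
The plan is to reduce everything to a single arithmetic property of the set $D$ of subdegrees and then read off the graph-theoretic conclusions. Fix $\alpha \in \Omega$, set $H = G_\alpha$, and identify $\Omega$ with the coset space $G/H$; the nontrivial suborbits are the $H$-orbits $\Delta \subsetneq \Omega$ of size $>1$, with $|\Delta| = [H : H \cap {}^gH]$ for a suitable $g$, while the fixed point $\alpha$ contributes the excluded subdegree $1$. Since the $\IP$-graph is by definition $\Gamma(D)$, Remark \ref{graphs-sets} lets me count its connected components on the prime vertex graph $\Delta(D)$ instead, and throughout I would phrase the combinatorics in terms of which primes divide which subdegrees. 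A clean observation to record at the outset is that choosing one vertex from each connected component of $\Gamma(D)$ produces a set of pairwise coprime subdegrees; hence the number of components is at most the maximal size of a pairwise coprime subset of $D$, and in particular it suffices to prove that $D$ contains no three pairwise coprime elements larger than $1$.

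The heart of the argument — and the step I expect to be the main obstacle — is therefore the following purely permutation-group statement: there do not exist three nontrivial subdegrees that are pairwise coprime. I would attack this through the orbital graphs of the suborbits, together with the classical fact that a suborbit and its paired suborbit have equal length. Given a nontrivial suborbit $\Delta$ and a point $\beta \in \Delta$, the subgroup $H \cap G_\beta$ has index $|\Delta|$ in $H$, so each $H$-orbit on $\Omega$ refines into at most $|\Delta|$ orbits of $H \cap G_\beta$; comparing the $H$-orbit lengths with the $(H \cap G_\beta)$-orbit lengths through this refinement yields divisibility relations linking $|\Delta|$ to the lengths of the suborbits reachable from $\alpha$ by short walks in the orbital graph. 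The delicate task is to extract from these relations, under the assumption that three subdegrees $d_1, d_2, d_3$ are pairwise coprime, a suborbit whose length is divisible by two of the three corresponding prime sets, contradicting coprimality. Establishing the exact divisibility needed, and controlling the degenerate cases where a walk collapses onto already-counted suborbits, is where essentially all the content of the theorem is concentrated.

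Granting the coprimality lemma, the remaining assertions follow from a finer version of the same orbital-graph analysis. The diameter bound in the connected case would come from showing that any two vertices of $\Gamma(D)$ are joined by a path obtained from a bounded number of product steps between suborbits, the bookkeeping of which caps the distance at $4$. In the disconnected case, the two pairwise coprime blocks of primes produced by the failure of further coprime triples force a rigid structure: one block yields a family of subdegrees that pairwise share a prime, giving a complete component, while the reachability estimates confine the other component to diameter at most $2$. The precise constants and the completeness claim emerge from the extremal case analysis of these walks; as a sanity check, specializing the whole framework to the action of $G \times G$ on $G$ by left and right translation recovers the conjugacy class sizes as subdegrees and reproduces the two-component bound for $\Delta'(G)$ already recorded in Theorem \ref{ppties-graphg}.
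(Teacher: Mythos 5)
You have not proved the theorem; you have produced a plan whose hard steps are exactly the ones you defer. Note first that the paper itself offers no proof of Theorem \ref{iptheorem}: it is quoted from Isaacs and Praeger \cite{ipgraph} (their Theorems A and C), so the comparison here is with that original argument. Your reduction of the component bound to the statement that $D$ contains no three pairwise coprime elements $>1$ is correct but content-free: the only implication you actually establish is the trivial one, that representatives of distinct components of $\Gamma(D)$ are pairwise coprime, so the reduced statement is exactly equivalent to Theorem A of \cite{ipgraph} and exactly as hard. You then explicitly leave it unproved (``the main obstacle'', ``where essentially all the content of the theorem is concentrated''), and the same happens with parts (1) and (2): the diameter bound $4$, the completeness of one component, and the diameter bound $2$ for the other are attributed to ``bookkeeping'' and ``extremal case analysis'' of walks that are never carried out. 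A referee cannot check divisibility relations that are only promised; as it stands, none of the three assertions of the theorem is established.

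There is also a concrete mathematical error in the planned attack. You invoke ``the classical fact that a suborbit and its paired suborbit have equal length.'' That is a theorem about \emph{finite} transitive groups; Theorem \ref{iptheorem} is stated for an arbitrary group $G$ acting transitively with finite subdegrees, and in that generality paired subdegrees can differ. For example, the orientation-preserving automorphism group of the directed tree in which every vertex has out-degree $2$ and in-degree $3$ acts vertex-transitively with all subdegrees finite, and the out-neighbour suborbit (length $2$) is paired with the in-neighbour suborbit (length $3$). Handling this failure, and more generally avoiding any argument of the form ``orbit sizes divide the group order'' (here $H=G_\alpha$ may be infinite, so only the finite indices $[H:H\cap G_\beta]$ are usable), is one of the genuine difficulties that Isaacs and Praeger had to overcome; an argument resting on equal paired subdegrees could at best yield the finite case. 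Your closing sanity check (the action of $G\times G$ on $G$ recovering $\Delta'(G)$ and Theorem \ref{ppties-graphg}) is fine, but it checks the statement, not your proof.
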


\begin{example}\label{orbits} Theorem \ref{iptheorem} will be applied later on in this paper (see Theorem \ref{eq-pted}) in the following context. Let $G, A$ be finite groups such that $G$ acts on $A$ by automorphisms.
As explained in \cite{ipgraph}, a consequence of Theorem \ref{iptheorem} is that common divisor graph  of the set  of orbit sizes of the action of $G$ on $A$ has at most two connected components. See \cite[Corollary B]{ipgraph}.

\medbreak If, furthermore, the orders of $G$ and $A$ are relatively prime, then a result of T. Yuster (\cite{yuster}) implies that the common divisor graph  of the set  of orbit sizes is connected and its diameter is at most $2$.
\end{example}

\section{Fusion categories}\label{fc}

In this section we discuss several definitions and results on fusion categories that will be needed throughout the paper. We refer the reader to \cite{ENO}, \cite{ENO2}.

Let $\C$ be a fusion category over $k$. A \emph{fusion subcategory} $\D\subset \mathcal{C}$ is a full tensor subcategory with the following property: if $X\in \mathcal{C}$ is isomorphic to direct summand of an object of $\D$ then $X\in\mathcal D$. It is shown in \cite[Appendix F]{DGNOI} that such a subcategory is rigid, so a fusion subcategory is itself a fusion category.

Let $\C, \D$ be fusion categories over $k$. A \emph{tensor functor} $F:\C \to \D$ is a strong monoidal $k$-linear exact
functor.

\medbreak Let $\mathcal{C}$ be a fusion category, and let $K(\mathcal{C})$ be its Grothendieck ring. We shall denote by $\Irr(\mathcal{C})$ the set of isomorphism classes of simple objects, so that $\Irr(\C)$ is a basis of $K(\C)$. For every $X, Y, Z \in \Irr(\C)$, let also $N_{XY}^Z \in \mathbb Z_+$ denote the multiplicity of the simple object $Z$ in the tensor product $X \otimes Y$, so that $N_{XY}^Z = \dim \Hom_\C(Z, X \otimes Y)$, and we have a decomposition $$X \otimes Y \cong \bigoplus_{Z \in \Irr(\C)} N_{XY}^Z \, Z.$$

There exists a unique ring homomorphism $\FPdim\!: K(\mathcal{C})\rightarrow \mathbb{R}$,  such that $\FPdim(X) > 0$, for all $X\in\Irr (\C)$. The number $\FPdim(X)$ is called the \emph{Frobenius-Perron dimension} of the object $X$, it is the largest positive eigenvalue of the matrix $N^X$ of left multiplication by $X$ in $K(\mathcal{C})$. See \cite[Section 8.1]{ENO}.

The Frobenius-Perron dimension of $\C$ is defined as $$\FPdim \C =
\sum_{X \in \Irr(\C)} (\FPdim X)^2.$$ The category $\C$ is called \emph{integral}  if
$\FPdim X \in \mathbb Z$, for all simple object $X \in \C$, and it is called
\emph{weakly integral} if $\FPdim \C \in \mathbb Z$.

\medbreak An object $g$ of $\C$ is \emph{invertible} if  $g \otimes g^* \cong \un$ (equivalently, if the Frobenius-Perron dimension of $g$ is 1). The set of isomorphism classes of invertible objects of $\C$ is a subgroup of the group of units of $K(\C)$, and we have $g^{-1} = g^*$, for all invertible object $g$. Let $X, Y \in \Irr(\C)$ and let $g$ be an invertible object. The multiplicity of  $g$ in the tensor product $X \otimes Y$ is either 0 or 1, and $g$ is a constituent in $X \otimes Y^*$ if and only if $g \otimes Y \cong X$.

\medbreak The category $\C$ is called \emph{pointed} if all its simple objects are invertible.   Every pointed fusion category is equivalent to the category $\C(G, \omega)$ of finite-dimensional vector spaces graded by a finite group $G$, with associativity constraint given by a $3$-cocycle $\omega:G \times G \times G \to k^*$.

The full subcategory of $\C$ generated by its invertible objects is the largest pointed fusion subcategory of $\C$ and it is denoted by $\C_{pt}$.

\subsection{Exact sequences and equivariantization}\label{exact-seq}
Let $\C, \D$ be fusion categories and let $F:\C \to \D$ be a tensor functor. The functor $F$ is \emph{dominant} if any object $Y$ of $\D$ is a subobject of $F(X)$ for some object $X\in \C$.

Let $\KER_F \subset \C$ denote the fusion subcategory of objects $X$ of $\C$ such that $F(X)$ is a trivial object of $\D$, that is, such that $F(X)$ is isomorphic to $\un^{(n)}$ for some natural integer $n$.
The functor $F$ is \emph{normal} if for every simple object $X \in \C$ such that $\Hom_\D(\un, F(X)) \neq 0$, we have that $X \in \KER_F$.

\medbreak Recall from \cite{tensor-exact} that an \emph{exact sequence of fusion categories} is a sequence of tensor functors between fusion categories
\begin{equation}\label{suite}\C' \overset{i}\to \C \overset{F}\to \C''
\end{equation}
if the functor $F$ is dominant and normal, and $i$ is a full embedding whose essential image is  $\KER_F$.

\medbreak Let  $G$ be a finite group. Denote by $\underline{G}$ the monoidal category whose objects are the elements of $G$,  morphisms are identities and the tensor product is given by the multiplication in $G$.

Let $\C$ be a fusion category and let $\underline{\Aut}_{\otimes}(\mathcal{C})$ be the monoidal category of tensor autoequivalences $\mathcal{C}$.  An \emph{action by tensor autoequivalences} of $G$ on  $\mathcal{C}$ is a monoidal functor $\rho: \underline{G}\rightarrow \underline{\Aut}_{\otimes}(\mathcal{C})$. In other words, for every $g\in G$, there is a $k$-linear functor $\rho^g:\mathcal{C}\rightarrow\mathcal{C}$ and natural isomorphisms of tensor functors
    $$\rho_2^{g,h}:\rho^g\rho^h\rightarrow\rho^{gh}, \; g, h\in G,\quad \rho_0: id_{\mathcal{C}}\rightarrow\rho^e, \hspace{2.9cm} $$
satisfying the following conditions:
$$(\rho_2^{gh,l})_X(\rho_2^{g,h})_{\rho^l(X)} = (\rho_2^{g,hl})_X\rho^g((\rho_2^{h,l})_X),$$
$$(\rho_2^{g,e})_X\rho_{\rho_0(X)}^g = (\rho_2^{e,g})_X(\rho_0)_{\rho^g(X)}$$
for every object $X$ of $\mathcal{C}$ and for all $g, h, l\in G$.

Let $\rho: \underline{G}\rightarrow \underline{\Aut}_{\otimes}(\mathcal{C})$ be an action of $G$ on $\mathcal{C}$ by tensor autoequivalences. A \emph{G-equivariant object} of $\mathcal{C}$ is a pair $(X,\{\mu^g\}_{g\in G})$ where $X$ is an object of $\mathcal{C}$, and   $\mu^g:\rho^g(X)\simeq X$, $g\in G$, is a collection of isomorphisms such that
$$\mu^g\rho^g(\mu^h)=\mu^{gh}(\rho_2^{g,h})_X, \hspace{1.5cm} \mu_e\rho_{0X} =\id_X,$$ for all $g,h\in G$, $X \in \C$.

The \emph{equivariantization} $\mathcal{C}^G$ of $\C$ under the action of $G$ is the category whose objects are $G$-equivariant objects of $\mathcal{C}$, and  morphisms $f:(X,\mu)\rightarrow(X',\mu^{\prime})$ are morphisms $f:X\rightarrow X^{\prime}$ in $\mathcal{C}$ such that $f\mu^g = \mu^{\prime g}\rho^g(f)$ for all $g\in G$.
The equivariantization of a fusion category is again a fusion category.

Is shown in \cite[Section 5.3]{tensor-exact} that the forgetful functor $F: \C^G \to \C$, $F(X, \mu) = X$,
is a normal dominant tensor functor that gives rise to an exact sequence of
fusion categories
\begin{equation}\label{ees}\Rep G \to \C^G \to \C. \end{equation}

\subsection{Weakly group-theoretical and solvable fusion categories}

Let $\mathcal{C}$ be a fusion category and let $G$ be a finite group. A $G$-\emph{grading} on $\mathcal{C}$ is a decomposition into a direct sum of full abelian subcategories $\mathcal{C}=\bigoplus_{g\in G}\mathcal{C}_g$, such that $\mathcal{C}^*_g=\mathcal{C}_{g^{-1}}$ and, for all $g,h\in G$, the tensor product maps $\mathcal{C}_g \times \mathcal{C}_h$ to $\mathcal{C}_{gh}$.

The \emph{neutral component}  $\mathcal{C}_e$ of the grading is a fusion subcategory. If $\mathcal{C}_g\neq 0$, for all $g\in G$, then grading is called \emph{faithful}; in this case, $\mathcal{C}$ is called an $G$-\emph{extension} of $\mathcal{C}_e$.

\medbreak The notion of nilpotency of a fusion category was introduced in the paper \cite{gel-nik}. A fusion category $\mathcal{C}$ is \emph{nilpotent} if there exists a sequence of fusion categories $\mathcal{C}_0= \vect, \mathcal{C}_1, \ldots, \mathcal{C}_n = \mathcal{C}$ and a sequence $G_1,\ldots, G_n$ of finite groups such that $\mathcal{C}_i$ is obtained from $\mathcal{C}_{i-1}$ by a $G_i$-extension. $\mathcal{C}$ is called \emph{cyclically nilpotent} if the groups $G_i$ can be chosen to be cyclic.

\medbreak Let $\mathcal{C}$ be a fusion category. A \emph{left module category} over $\mathcal{C}$ (or $\C$-\emph{module category}) is a category $\mathcal{M}$ equipped with an action bifunctor $\otimes:\mathcal{C}\times\mathcal{M}\rightarrow\mathcal{M}$ and natural isomorphisms
$$ m_{X,Y,M}:(X\otimes Y)\otimes M\rightarrow X\otimes(Y\otimes M),\hspace{1.5cm} u_M:\textbf{1}\otimes M\rightarrow M, $$
$X, Y \in \C$, $M \in \mathcal M$, satisfying appropriate coherence conditions.  A $\mathcal{C}$-module category is called \emph{indecomposable} if it is not equivalent to a direct sum of two nontrivial $\mathcal{C}$-submodule categories.

Let $\mathcal{M}$ be an indecomposable $\mathcal{C}$-module category. Then the category $\mathcal{C}^*_{\mathcal{M}}$ of $\mathcal{C}$-module endofunctors of $\mathcal{M}$ is a fusion category.
A fusion category $\mathcal{D}$ is called \emph{(categorically) Morita equivalent} to $\mathcal{C}$  if there an indecomposable $\mathcal{C}$-module category $\mathcal{M}$ such that $\mathcal{D}\cong (\mathcal{C}^*_{\mathcal{M}})^{op}$.

\medbreak A fusion category $\C$ is called group-theoretical if it is categorically Morita equivalent to a pointed fusion category \cite{ENO}. Every group-theoretical fusion category is integral.

Weakly group-theoretical and solvable fusion categories were introduced in the paper \cite{ENO2}. A fusion category $\mathcal{C}$ is \emph{weakly group-theoretical} if it is Morita equivalent to a nilpotent fusion category.   On the other hand, $\mathcal{C}$ is \emph{solvable} if any of the following two equivalent conditions are satisfied:
\begin{itemize}
  \item[(i)] $\mathcal{C}$ is Morita equivalent to a cyclically nilpotent fusion category.
  \item[(ii)] There is a sequence of fusion categories $\mathcal{C}_0 = Vec, \mathcal{C}_1,\ldots, \mathcal{C}_n=\mathcal{C}$ and a sequence $G_1,\ldots, G_n$ of cyclic groups of prime order such that $\mathcal{C}_i$ is obtained from $\mathcal{C}_{i-1}$ by a $G_i$-equivariantization or as $G_i$-extension.
\end{itemize}

A weakly group-theoretical or solvable fusion category $\C$ is weakly integral, that is, $\FPdim \C \in \mathbb Z$, but not always integral. It is proved in \cite[section 4]{ENO2} that the class of weakly group-theoretical categories is closed under taking extensions, equivariantizations,  Morita equivalent categories, tensor products, subcategories, the center and component categories of quotient categories. Similarly, the class of solvable categories is closed under taking  Morita equivalent categories, tensor products, subcategories, the center, component categories of quotient categories and extensions and equivariantizations by solvable groups.

\section{The Frobenius-Perron graphs of an integral fusion category}\label{fp-graph}

Let $\C$ be an integral fusion category.  By analogy with the standard notation in the character theory of finite groups, we shall denote by $\cd(\C)$ the set
$$\cd(\C) = \{ \FPdim X:\, X \in \Irr(\C) \}.$$

\begin{definition} Let $\C$ be an integral fusion category. The prime graph $\Delta(\C)$ and the common divisor graph $\Gamma(\C)$ of $\C$ are, respectively, the prime graph and the common divisor graph on the set $\cd(\C) -\{ 1 \}$.

\medbreak We shall call $\Delta(\C)$ and $\Gamma(\C)$ the \emph{Frobenius-Perron graphs} of $\C$.
\end{definition}

It is clear that if $\D$ is a fusion subcategory of $\C$, then the graphs $\Delta(\D)$ and $\Gamma(\D)$ are subgraphs of $\Delta(\C)$ and $\Gamma(\C)$, respectively.

\medbreak
Let $G$ be a finite group. When $\C =\Rep G$ is the category of finite dimensional representations of $G$, the Frobenius-Perron graphs $\Delta(\C)$ and $\Gamma(\C)$ of $\C$ coincide with the graphs $\Delta (G)$ and $\Gamma(G)$ discussed in Subsection \ref{prime-G}.

\medbreak In the rest of this section we discuss the Frobenius-Perron graphs of some examples of integral fusion categories in the literature. Recall that a fusion category $\C$ is called \emph{of type} $(d_0,n_0;
d_1,n_1;\cdots;d_s,n_s)$, where $1=d_0, d_1,\cdots, d_s$, $s \geq 0$, are positive
real numbers such that $1 = d_0 < d_1< \cdots < d_s$, and 
$n_1,n_2,\cdots,n_s$ are positive integers,  if for all $i = 0, \cdots, s$, $n_i$ is
the number of the non-isomorphic simple objects of Frobenius-Perron dimension $d_i$ in $\C$.

\begin{example}\label{ej-36} Let $\C$ be one of the non-group theoretical modular categories $\C(\mathfrak{sl}_3, q, 6)$, where $q^2$ is a primitive $6$th root of unity \cite[Example 4.14]{NR}, \cite[Section 4.1]{int-mod}.  We have $\FPdim \C = 36$ and thus $\C$ is solvable in view of \cite[Theorem 1.6]{ENO2}.  The fusion category $\C$ is of type $(1, 3; 2, 6; 3, 1)$. Thus we see that the graph $\Delta(\C)$ consists of two isolated vertices $\Delta(\C) = \{ 2\} \cup \{ 3\}$.
\end{example}

\begin{example} Suppose $\C$ has a unique non-invertible simple object $X$; that is, $\C$ is a so-called \emph{near-group} fusion category. Assume in addition that $\FPdim X \in \mathbb Z$. Then $\Delta(\C)$ is the complete graph on the vertex set $\pi(\FPdim X)$ of prime divisors of $\FPdim X$. \end{example}

\begin{example}\label{ej-zty} Let $\C = \Z(\TY(A, \chi, \tau))$ be the Drinfeld  center of a Tambara-Yamagami fusion category $\TY(A, \chi, \tau)$, where $A$ is a finite abelian group, $\chi$ is a non-degenerate symmetric bicharacter on $A$ and $\tau =\pm 1$. The fusion category $\C$ is integral if and only if the order of $A$ is a square. The Frobenius-Perron dimension of $\C$ is $4n^2$, where $n = |A|$. In addition, being the center of a cyclically nilpotent fusion category, $\C$ is solvable.

It follows from \cite[Proposition 4.1]{GNN} that $\C$ is of type $(1, 2n; 2, \frac{n(n-1)}{2}; \sqrt{n}, 2n)$.
Assume that $\C$ is integral. Then the vertex set of the graph $\Delta(\C)$ consists of the prime $2$ and the prime divisors of $|A|$. Moreover, if $|A|$ is even, then $\Delta(\C)$ is a complete graph, while if $|A|$ is odd,   $\Delta(\C)$ has two connected components: one of them is the complete graph on the set of prime divisors of $|A|$ and the other consists of an isolated vertex $\{2\}$.

We point out that the same holds for the fusion subcategories $\E = \E(q, \pm)$ of $\C$ constructed in \cite[Section 5]{GNN}.
\end{example}

\section{Frobenius-Perron graph of some classes of graded extensions}\label{fp-extensions}

In this section we study the prime graph of some classes of nilpotent fusion categories.

\begin{proposition}\label{ext-pointed} Let $\C$ be an integral fusion category. Suppose that $\C$ is a $2$-step nilpotent fusion category, that is, $\C$ is an extension of a pointed fusion category. Then the graph $\Delta(\C)$ is connected and its diameter is at most $2$.
\end{proposition}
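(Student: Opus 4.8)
The plan is to prove the stronger statement that $\Delta(\C)$ is in fact a \emph{complete} graph, which immediately gives connectedness and diameter at most $2$. Write $\C = \bigoplus_{g \in G} \C_g$ for the given faithful $G$-grading, with $\C_e$ pointed, and let $\Gamma = \Irr(\C_e)$ be the finite group of invertible objects of $\C_e$, so $\FPdim \C_e = |\Gamma|$. The first and crucial step is to show that all simple objects lying in a fixed homogeneous component $\C_g$ share a single Frobenius-Perron dimension $d_g$. Here $\Gamma$ acts on $\Irr(\C_g)$ by left tensor multiplication $h \cdot X = h \otimes X$, and for $X \in \Irr(\C_g)$ the stabilizer $H_X = \{h \in \Gamma : h \otimes X \cong X\}$ coincides with the set of invertible constituents of $X \otimes X^* \in \C_e$; since invertibles occur with multiplicity $0$ or $1$, we get $X \otimes X^* \cong \bigoplus_{h \in H_X} h$ and hence $(\FPdim X)^2 = \FPdim(X \otimes X^*) = |H_X|$. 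Using the standard fact that in a faithfully graded fusion category all components satisfy $\FPdim \C_g = \FPdim \C_e = |\Gamma|$, each $\Gamma$-orbit in $\Irr(\C_g)$ contributes $[\Gamma : H_X]\cdot(\FPdim X)^2 = |\Gamma|$ to the sum $\sum_{X \in \Irr(\C_g)}(\FPdim X)^2 = |\Gamma|$. Therefore there is exactly one orbit, and a single value $d_g := \FPdim X$ for $X \in \Irr(\C_g)$; passing to duals gives $d_{g^{-1}} = d_g$.

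Next I would extract divisibility relations from the compatibility of the tensor product with the grading. For any $a, b \in G$ and any $X \in \Irr(\C_a)$, $Y \in \Irr(\C_b)$, faithfulness guarantees $\C_{ab} \neq 0$, and $X \otimes Y \in \C_{ab}$ decomposes into simples all of dimension $d_{ab}$, so $d_a d_b = \FPdim(X \otimes Y) = m_{ab}\,d_{ab}$ for some integer $m_{ab} \geq 1$; in particular $d_{ab} \mid d_a d_b$. Specializing this single identity at $(a,b) = (gh, h^{-1})$ and at $(a,b) = (g^{-1}, gh)$, and using $d_{g^{-1}}=d_g$, $d_{h^{-1}}=d_h$, yields the two relations $d_g \mid d_{gh}\,d_h$ and $d_h \mid d_g\,d_{gh}$.

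The last step is an elementary case analysis. Let $p, q \in \pi(\cd(\C))$ be vertices of $\Delta(\C)$ and choose $g, h \in G$ with $p \mid d_g$ and $q \mid d_h$ (so $g, h \neq e$). From $d_g \mid d_{gh}\,d_h$ we deduce $p \mid d_h$ or $p \mid d_{gh}$, and from $d_h \mid d_g\,d_{gh}$ we deduce $q \mid d_g$ or $q \mid d_{gh}$. If $p \mid d_h$ then $pq \mid d_h$; if $q \mid d_g$ then $pq \mid d_g$; and in the remaining case both $p \mid d_{gh}$ and $q \mid d_{gh}$ hold, so $pq \mid d_{gh}$. In every case $p$ and $q$ are joined by an edge, so $\Delta(\C)$ is complete and the proposition follows.

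The step I expect to be the main obstacle is the first one, namely proving that each homogeneous component is \emph{homogeneous} also in Frobenius-Perron dimension. Everything after that is routine divisibility bookkeeping, but this reduction is what makes the numbers $d_g$ well defined and allows the clean relation $d_a d_b = m_{ab} d_{ab}$; it rests essentially on pointedness of $\C_e$ (forcing $X \otimes X^*$ to be a sum of invertibles) together with the equality of component dimensions in a faithful grading.
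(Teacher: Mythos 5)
Your proof is correct, and it in fact establishes a strictly stronger conclusion than the paper does: $\Delta(\C)$ is a \emph{complete} graph, not merely connected of diameter at most $2$. The route is genuinely different. The paper works locally with a pair of simples $X, Y$ satisfying $p \mid \FPdim X$, $q \mid \FPdim Y$: it uses pointedness of $\C_e$ to get $X \otimes X^* \cong \bigoplus_{s \in G[X]} s$ and $|G[X]| = (\FPdim X)^2$ (the same fact that drives your stabilizer computation), and then splits into two cases --- if $|G[X]|$ and $|G[Y]|$ share a prime $r$ it only produces the path $p,r,q$, so $d(p,q)\leq 2$; if they are coprime, then $G[X]\cap G[Y]=\un$ and \cite[Lemma 2.5]{fusion-lowdim} shows $X^*\otimes Y$ is simple, giving an edge. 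Your argument instead globalizes: combining $(\FPdim X)^2 = |H_X|$ with orbit-stabilizer and the equality $\FPdim\C_g = \FPdim\C_e$ of homogeneous components in a faithful grading (\cite[Proposition 8.20]{ENO}), you show each $\Irr(\C_g)$ is a single orbit under the invertibles and hence carries a single dimension $d_g$, after which the relations $d_{ab}\mid d_a d_b$ force an edge between any two vertices. Two small remarks: your transitivity step can be shortened, since for $X, Y \in \Irr(\C_g)$ the nonzero object $Y \otimes X^*$ lies in the pointed category $\C_e$, so some invertible $h$ has $\Hom_\C(h\otimes X, Y)\neq 0$, whence $Y \cong h \otimes X$ with no counting needed; and in your final case analysis you should say explicitly that $p \neq q$ (harmless, as edges are only needed between distinct vertices). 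What the paper's approach buys is that it avoids the component-dimension equality and the single-orbit structure entirely; what yours buys is the sharper statement (diameter $1$), which is consistent with, and strengthens, the paper's subsequent example on generalized Tambara--Yamagami categories, where completeness is observed directly.
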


\begin{proof} By assumption there exists a finite group $G$ and a faithful grading $\C = \oplus_{g \in G}\C_g$ such that $\C_e \cong \D$ is a pointed fusion subcategory.

Suppose $X$ is a non-invertible simple object of $\C$. Then $X \in \C_g$, for some $e\neq g \in G$, and therefore $X \otimes X^* \in \C_e = \D$. Then all simple constituents of $X \otimes X^*$ are invertible, and it follows that $X \otimes X^* \cong \bigoplus_{s \in G[X]}s$, where $G[X]$ is the subgroup of the group of invertible objects $s$ of $\C$  such that $s \otimes X \cong X$. Hence the order of $G[X]$ equals $(\FPdim X)^2$ and thus it has the same prime divisors as $\FPdim X$.

Suppose $p \neq q$ are vertices of $\Delta(\C)$, so that there exist simple objects $X$ and $Y$ such that $p | \FPdim X$ and $q | \FPdim Y$. Then $p | |G[X]|$ and $q | |G[Y]|$. Suppose first that $|G[X]|$ and $|G[Y]|$ have a common prime divisor $r$. We may assume that $p \neq r \neq q$, and since $pr | \FPdim X$, $qr | \FPdim Y$, we have that $d(p, q) \leq 2$.

If, on the other hand, $|G[X]|$ and $|G[Y]|$ are relatively prime, then $G[X] \cap G[Y] = \un$ and therefore $X \otimes X^*$ and $Y \otimes Y^*$ have no nontrivial simple constituent. It follows from \cite[Lemma 2.5]{fusion-lowdim} that the tensor product $X^* \otimes Y$ is simple. Since $pq | \FPdim (X^* \otimes Y)$, then $d(p, q) =1$ in this case. This finishes the proof of the proposition. \end{proof}

\begin{example} As a special instance of the situation in Proposition \ref{ext-pointed}, consider the case where $\C$ is a fusion category with \emph{generalized Tambara-Yamagami fusion rules}: that is, $\C$ is a $\mathbb Z_2$-extension of a pointed fusion category.  By \cite[Lemma 5.1 (ii)]{faithful-fusion}, all non-invertible simple objects $X$ of $\C$ have the same stabilizer $G[X] = S$, where $S$ is a normal subgroup of the group of invertible objects of $\C$. In particular, $\C$ is integral if and only if the order of $S$ is a square, and in this case $\Delta (\C)$ is the complete graph in the vertex set of prime divisors of the order of $S$.
\end{example}

In the context of braided fusion categories we have the following:

\begin{proposition}\label{nilp} Let $\C$ be a braided nilpotent integral fusion category. Then $\Delta(\C)$ is a complete graph. \end{proposition}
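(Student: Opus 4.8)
The plan is to reduce the statement to the structural decomposition of braided nilpotent fusion categories into ``prime components''. The key input is the braided analogue of the fact that a finite nilpotent group is the direct product of its Sylow subgroups, due to Drinfeld--Gelaki--Nikshych--Ostrik (see \cite{DGNOI}): a braided nilpotent fusion category $\C$ is equivalent, as a braided fusion category, to a Deligne tensor product
$$\C \simeq \C_{p_1} \boxtimes \cdots \boxtimes \C_{p_s},$$
where $p_1, \dots, p_s$ are the distinct prime divisors of $\FPdim \C$ and each $\C_{p_i}$ is a braided fusion category with $\FPdim \C_{p_i}$ a power of $p_i$. Under this equivalence the simple objects of $\C$ are exactly the objects $X = X_1 \boxtimes \cdots \boxtimes X_s$ with $X_i \in \Irr(\C_{p_i})$, and $\FPdim X = \prod_{i=1}^s \FPdim X_i$.

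First I would record that each factor $\C_{p_i}$ is integral: its simple objects embed into $\C$ via $X_i \mapsto \un \boxtimes \cdots \boxtimes X_i \boxtimes \cdots \boxtimes \un$ (the unit object in every slot but the $i$-th), so their Frobenius--Perron dimensions appear among those of simple objects of $\C$, hence are integers. Since $\FPdim \C_{p_i}$ is a power of $p_i$ and the Frobenius--Perron dimension of a simple object divides that of the category in an integral fusion category (\cite{ENO}), each $\FPdim X_i$ is itself a power of $p_i$. Consequently every simple object $X = X_1 \boxtimes \cdots \boxtimes X_s$ of $\C$ has $\FPdim X = \prod_{i=1}^s p_i^{a_i}$ with $a_i \geq 0$, and in particular the vertex set of $\Delta(\C)$ is contained in $\{p_1, \dots, p_s\}$.

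Then I would prove completeness directly. Let $p_i \neq p_j$ be two vertices of $\Delta(\C)$. By definition there are simple objects $X$ and $Y$ with $p_i \mid \FPdim X$ and $p_j \mid \FPdim Y$; writing $X = X_1 \boxtimes \cdots \boxtimes X_s$ and $Y = Y_1 \boxtimes \cdots \boxtimes Y_s$, this forces $\FPdim X_i = p_i^{a}$ with $a \geq 1$ and $\FPdim Y_j = p_j^{b}$ with $b \geq 1$. Now consider the simple object $Z$ whose $i$-th tensor factor is $X_i$, whose $j$-th factor is $Y_j$, and whose remaining factors are the unit object. Then $\FPdim Z = p_i^{a} p_j^{b}$ is divisible by $p_i p_j$, so $p_i$ and $p_j$ are joined by an edge in $\Delta(\C)$. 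As any two vertices are adjacent (the cases of a single vertex or no vertices being trivially complete), $\Delta(\C)$ is a complete graph.

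The only nontrivial ingredient is the decomposition theorem, which I would invoke as a black box from \cite{DGNOI}; everything else is the elementary observation that Frobenius--Perron dimension is multiplicative across a Deligne product and that one can ``turn on'' two chosen prime components simultaneously in a single simple object while keeping the rest trivial. The main conceptual point to check is that this factorization holds as \emph{braided} fusion categories and genuinely separates the primes, so that the component $\C_{p_i}$ contributes only $p_i$-powers; this is exactly where braidedness (as opposed to mere nilpotency) is essential, since for non-braided nilpotent fusion categories no such Sylow-type factorization need exist and the conclusion indeed fails in general (compare Proposition \ref{ext-pointed}).
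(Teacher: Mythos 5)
Your proof follows essentially the same route as the paper's: the Drinfeld--Gelaki--Nikshych--Ostrik decomposition of a braided nilpotent fusion category into a Deligne product of braided factors of prime power Frobenius--Perron dimension, the observation that each integral factor contributes only $p_i$-powers, and then a tensor construction producing a simple object whose dimension is divisible by two chosen primes. The one flaw is your justification of the divisibility step: the claim that in an integral fusion category the Frobenius--Perron dimension of a simple object divides that of the category is \emph{not} a theorem of \cite{ENO} --- this is a categorical analogue of Kaplansky's conjecture and is open for general integral fusion categories. What saves you is that each factor $\C_{p_i}$ is itself \emph{braided} (the decomposition is an equivalence of braided categories, and any fusion subcategory of a braided category is braided), so the divisibility you need follows from \cite[Theorem 2.11]{ENO2}, valid for weakly integral braided fusion categories; this is precisely the reference the paper uses. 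With that substitution your argument is complete. The remaining difference is cosmetic: the paper constructs a single simple object $Y_1 \boxtimes \dots \boxtimes Y_t \boxtimes \un \boxtimes \dots \boxtimes \un$ divisible by all relevant primes at once, whereas you join vertices pairwise; the two are interchangeable.
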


\begin{proof} By \cite[Theorem 1.1]{DGNO}, there is an equivalence of braided fusion categories $\C \cong \C_1 \boxtimes \dots \boxtimes \C_m$, where for every $i =1, \dots, m$, $\C_i$ is a braided fusion category of prime power Frobenius-Perron dimension $p_i^{d_i}$, $d_i \geq 1$.

It is known that the Frobenius-Perron dimensions of simple objects in a weakly integral braided fusion category $\C$ divide the Frobenius-Perron dimension of $\C$ \cite[Theorem 2.11]{ENO2}.

Suppose $\C$ is integral. Then so are the fusion subcategories $\C_1, \dots, \C_m$.
Assume in addition that $\C$ is not pointed and $\C_1, \dots, \C_t$ are the non-pointed constituents, where $1 \leq t \leq m$. Then, for every $1\leq i \leq t$, $\C_i$ has a simple object $Y_i$ of Frobenius-Perron dimension divisible by $p_i$.

Note that every simple object $X$ of $\C$ is of the form  $X = X_1\boxtimes X_2 \boxtimes \dots \boxtimes X_m$, where $X_i$ is a simple object of $\C_i$, for all $i =1, \dots, m$. Hence the vertex set of $\Delta(\C)$ is the set $\{p_1, \dots, p_t\}$.

In addition, $Y = Y_1\boxtimes Y_2 \boxtimes \dots \boxtimes Y_t \boxtimes \un \boxtimes \dots \boxtimes \un$
is a simple object of $\C$, whose Frobenius-Perron dimension is divisible by $p_i$, for all $i =1, \dots, t$.
Hence we find that $\Delta(\C)$ is the complete graph on the vertex set $\{p_1, \dots, p_t\}$.
\end{proof}

\section{Gallagher's Theorem for fusion categories}\label{s-gallagher}

Let $\C, \D$ be fusion categories. Recall that the \emph{kernel} of a tensor functor $F:\C \to \D$ is the full subcategory $\KER_F = F^{-1}(\langle \un\rangle)$ of $\C$ whose objects are those $X \in \C$ such that $F(X)$ is a trivial object of $\D$ \cite[Section 3.1]{tensor-exact}.

\begin{proposition}\label{gallagher} Let $F:\C \to \D$ be a tensor functor between fusion categories $\C, \D$. Let $X, Y$ be simple objects of $\C$ such that $Y \in \KER_F$ and $F(X)$ is a simple object of $\D$. Then $Y \otimes X$ is a simple object of $\C$.
\end{proposition}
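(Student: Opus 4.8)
The plan is to reduce the statement to the single numerical identity $\dim\End_\C(Y\otimes X)=1$, which in a semisimple $k$-linear category with $k$ algebraically closed is equivalent to $Y\otimes X$ being simple (an object is simple precisely when its endomorphism algebra is $k$). First I would record two preliminary facts: $Y\otimes X\neq 0$, and, since $\KER_F$ is a fusion subcategory, $Y^*$ and hence $Y^*\otimes Y$ lie in $\KER_F$, so that every simple constituent of $Y^*\otimes Y$ belongs to $\KER_F$.

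Next I would rewrite the endomorphism algebra by rigidity. The standard adjunctions give
$$\dim\End_\C(Y\otimes X)=\dim\Hom_\C\big(X,(Y^*\otimes Y)\otimes X\big)=\sum_{W} N^{W}\,\dim\Hom_\C(X,W\otimes X),$$
where $W$ runs over the simple constituents of $Y^*\otimes Y$ and $N^{W}$ is the multiplicity of $W$. A further use of rigidity identifies $\dim\Hom_\C(X,W\otimes X)=\dim\Hom_\C(W^*,X\otimes X^*)$, the multiplicity of $W^*$ in $X\otimes X^*$. Since every such $W$, hence $W^*$, lies in $\KER_F$, the whole sum is controlled by which objects of $\KER_F$ occur in $X\otimes X^*$. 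The multiplicity of $\un$ in $Y^*\otimes Y$ is $\dim\Hom_\C(Y,Y)=1$, and the term $W=\un$ contributes $\dim\Hom_\C(X,X)=1$; it remains only to show that all other terms vanish.

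This is the crux, and the step I expect to be the \emph{main obstacle}: proving that the only simple object of $\KER_F$ occurring in $X\otimes X^*$ is $\un$, with multiplicity $1$. This is exactly where the hypothesis that $F(X)$ is simple enters. I would let $P$ be the largest subobject of $X\otimes X^*$ lying in $\KER_F$ (a direct summand, by semisimplicity), so that $F(P)\cong\un^{(s)}$ for some $s\ge 0$. Applying the exact functor $F$ to the inclusion $P\hookrightarrow X\otimes X^*$ yields an embedding $\un^{(s)}\cong F(P)\hookrightarrow F(X)\otimes F(X)^*$. Because $F(X)$ is simple, $\un$ occurs in $F(X)\otimes F(X)^*$ with multiplicity exactly $1$, forcing $s\le 1$; on the other hand $\un$ is a summand of $X\otimes X^*$ and lies in $\KER_F$, so it is a summand of $P$ and $s\ge 1$. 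Hence $s=1$ and $F(P)\cong\un\cong F(\un)$. Writing $P=\un\oplus P'$ gives $F(P')=0$; since a tensor functor between fusion categories kills no nonzero object (for any simple $Z$, $\un$ is a summand of $Z\otimes Z^*$, so $F(Z)\neq 0$), this forces $P'=0$, i.e. $P=\un$.

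Feeding this back, $\dim\Hom_\C(X,W\otimes X)=\delta_{W,\un}$, whence $\dim\End_\C(Y\otimes X)=N^{\un}=1$ and $Y\otimes X$ is simple. Everything apart from the computation of $P$ is formal rigidity and semisimplicity, so that is where I would concentrate the care. One could alternatively invoke that tensor functors preserve Frobenius--Perron dimension to read off $s=\FPdim P$ directly, but the faithfulness argument above avoids even that.
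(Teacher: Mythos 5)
Your proof is correct and is in essence the paper's own argument: in both, the crux is that simplicity of $F(X)$ forces the unit to occur with multiplicity exactly one in $F(X)\otimes F(X)^*$, so that no nontrivial simple object of $\KER_F$ can be a constituent of $X\otimes X^*$ (using, explicitly in your case and implicitly in the paper's, that a tensor functor kills no nonzero object). The only real difference is cosmetic: where the paper invokes the simplicity criterion of \cite[Lemma 2.5]{fusion-lowdim} (namely that $Y\otimes X$ is simple if and only if $Y^*\otimes Y$ and $X\otimes X^*$ share no nontrivial simple constituent), you reprove an equivalent criterion inline via the adjunction computation of $\dim\End_\C(Y\otimes X)$, which makes your argument self-contained but mathematically identical in substance.
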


\begin{proof} By \cite[Lemma 2.5]{fusion-lowdim}, a necessary and sufficient condition for the tensor product $Y \otimes X$ to be simple is that for any simple object $Z \neq \un$ of $\C$, either $\Hom_\C(Z, Y^*\otimes Y) =0$ or $\Hom_\C(Z, X\otimes X^*) =0$.

Consider a simple object $Z \neq \un$ such that $\Hom_\C(Z, Y^*\otimes Y) \neq 0$.  Since $Y\in \KER_F$, this implies that also $Z\in \KER_F$, that is, $F(Z)$ is a trivial object of $\D$.

Suppose that $\Hom_\C(Z, X\otimes X^*)$ is also nonzero. Then we have a decomposition $X\otimes X^* \cong \un \oplus Z \oplus \dots$ in $\C$. Applying the functor $F$, we obtain isomorphisms  $F(X) \otimes F(X)^* \cong F(X\otimes X^*) \cong \un \oplus F(Z) \oplus \dots$ in $\D$. Since $F(Z) \neq 0$,  we get that the multiplicity of the trivial object of $\D$ in $F(X) \otimes F(X)^*$ is bigger than $1$. This contradicts the assumption that $F(X)$ is simple. Therefore $\Hom_\C(Z, X\otimes X^*) = 0$ for all such simple object $Z$, and $Y\otimes X$ is simple, as claimed. \end{proof}

Recall that a sequence of tensor functors $\C' \overset{f}\to \C \overset{F}\to \C''$ between tensor categories is called \emph{exact} if $F$ is a dominant normal tensor functor and $f$ is a full embedding whose essential image is $\KER_F$. See Subsection \ref{exact-seq}. We shall identify $\C'$ with $\KER_F$.
As an immediate consequence of Proposition \ref{gallagher}, we obtain:

\begin{corollary}\label{gallagher-exact} Let $\C' \to \C \overset{F}\to \C''$ be an exact sequence of fusion categories. Let also $Y \in \C'$, $X \in \C$, be simple objects and assume that $F(X)$ is a simple object of $\C''$. Then $Y \otimes X$ is a simple object of $\C$. \qed
\end{corollary}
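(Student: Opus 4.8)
The plan is to deduce the statement directly from Proposition \ref{gallagher} by unwinding the definition of an exact sequence, so that the whole argument amounts to a translation of notation. Recall from Subsection \ref{exact-seq} that in the exact sequence $\C' \to \C \overset{F}\to \C''$ the first functor is a full embedding whose essential image is precisely $\KER_F$, and as noted just before the statement we identify $\C'$ with $\KER_F \subset \C$. The only thing I need to verify is that, under this identification, the hypotheses of Proposition \ref{gallagher} are met with $\D = \C''$.

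First I would observe that the embedding $\C' \hookrightarrow \C$ is fully faithful, hence it carries simple objects of $\C'$ to simple objects of $\C$; in particular the simple object $Y \in \C'$ becomes a simple object of $\C$ lying in the fusion subcategory $\KER_F$. Thus $Y$ is a simple object of $\C$ with $Y \in \KER_F$, while $X$ is by hypothesis a simple object of $\C$ whose image $F(X)$ is simple in $\C'' = \D$. These are exactly the two requirements appearing in the statement of Proposition \ref{gallagher} for the tensor functor $F \colon \C \to \C''$.

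Invoking that proposition then gives immediately that $Y \otimes X$ is a simple object of $\C$, which is the desired conclusion. Since every step is a direct reading of the definitions, there is no genuine obstacle here; the only point deserving a word is that membership in $\C'$ coincides with membership in $\KER_F$, and this is built into the notion of an exact sequence recalled above.
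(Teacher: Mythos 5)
Your proposal is correct and is exactly the paper's argument: the corollary is stated there as an immediate consequence of Proposition \ref{gallagher}, using precisely the identification of $\C'$ with $\KER_F$ built into the definition of an exact sequence. Your extra remark that the full embedding carries simple objects of $\C'$ to simple objects of $\C$ is a harmless (and correct) spelling-out of a point the paper leaves implicit.
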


\begin{remark} Consider an action $\underline{G} \to \underline{\Aut}_{\otimes}\C$ of a finite group $G$ on a fusion category $\C$ by  tensor autoequivalences, and let $\C^G$ denote the corresponding equivariantization. The forgetful functor $\C^G \to \C$ gives rise to an exact sequence of fusion categories $\Rep G \to \C^G \to \C$ \cite[Section 5.3]{tensor-exact}.

Corollary \ref{gallagher-exact} implies that for every finite-dimensional irreducible representation $V$ of $G$ and for every simple equivariant object $(X, u) \in \C^G$ whose underlying object $X$ is a simple object of $\C$, the tensor product $V \otimes (X, u)$ is a simple object of $\C^G$.

\medbreak An instance of an equivariantization exact sequence arises from exact sequences of finite groups: if $F$ is a finite group and $N \subseteq F$ is a normal subgroup, then the natural action of $F$ on finite-dimensional representations of $N$ induces an action the quotient group $G = F/N$ on $\Rep N$ by tensor autoequivalences. Moreover, there is an equivalence of tensor categories $\Rep F \cong (\Rep N)^G$, such that the forgetful functor $(\Rep N)^G \to \Rep N$ becomes identified with the restriction functor $\Rep F \to \Rep N$, $X \mapsto X_N$. (See \cite[Section 3]{ext-ty} for a discussion in the more general context of cocentral exact sequences of Hopf algebras.)

Thus we obtain that for any irreducible characters $\psi$ and $\chi$ of $F$ such that $N \subseteq \ker \psi$ and $\chi_N \in \Irr(N)$, the product $\psi\chi$ is again an irreducible character of $F$.
This well-known fact in finite group theory, is a consequence of a theorem of Gallagher; see for instance \cite[Corollary 6.17]{isaacs}.  \end{remark}

\section{The Frobenius-Perron graphs of an equivariantization}\label{graph-equiv}

Throughout this section $\C$ will be a fusion category and $G$ will be a finite group endowed with an action on $\C$ by tensor autoequivalences $\rho: \underline G \to \underline{\Aut}_\otimes(\C)$.

Let $\C^G$ be the equivariantization of $\C$ under the action $\rho$ and let $F: \C^G \to \C$ denote the forgetful functor.

\medbreak For every simple object $Z$ of $\C$, let $G_Z \subseteq G$ be the inertia subgroup of $Z$, that is, $G_Z = \{g\in G:\,  \rho^g(Z) \cong  Z \}$. Since $Z$ is simple, there is a $2$-cocycle $\alpha_Z: G_Z \times G_Z \to k^*$ defined by the relation
\begin{equation}\label{alfa} \alpha_Z(g, h)^{-1} \id_Z = c^g \rho^g(c^h)({\rho^{g, h}_{2_Z}})^{-1}(c^{gh})^{-1}: Z \to Z, \end{equation}
where, for all $g \in G_Z$, $c^g: \rho^g(Z) \to Z$ is a fixed isomorphism \cite[Subsection 2.3]{fusionrules-equiv}.

\medbreak Simple objects of $\C^G$ are parameterized by pairs $(Z, U)$, where $Z$ runs over the $G$-orbits on $\Irr(\C)$ and $U$ is an equivalence class of an  irreducible $\alpha_Z$-projective representation of $G_Z$.

We shall use the notation $S_{Z, U}$ to indicate the (isomorphism class of the) simple object corresponding to the pair $(Z, U)$. We have in addition
\begin{equation}\label{dim-equiv}\FPdim S_{Z, U} =  [G:G_Z] \dim U \FPdim Z.\end{equation}
Furthermore, $F(S_{Z, U})$ is a direct sum of conjugates of the simple object $Z$ in $\C$. See \cite[Corollary 2.13]{fusionrules-equiv}.

\medbreak \emph{We shall assume in what follows that the fusion category $\C$ is integral.} This is equivalent to the fusion category $\C^G$ being integral.

\begin{remark} The full subcategory of $\C^G$ whose simple objects are parameterized by $S_{\un, U}$, where $U$ runs over the nonisomorphic irreducible representations of $G$, is a fusion subcategory of $\C^G$ equivalent to $\Rep G$, that coincides with the kernel $\KER_F$ of the forgetful functor $F: \C^G \to \C$. In particular, $\Delta(G)$ is a subgraph of $\Delta(\C^G)$.
\end{remark}

\begin{lemma}\label{simple-relprime}  Let $X$ be a simple object of $\C^G$ such that $\FPdim X$ is relatively prime to the order of $G$. Then $F(X)$ is a simple object of $\C$.
\end{lemma}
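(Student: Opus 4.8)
The plan is to use the explicit parameterization of simple objects of $\C^G$ recalled just above the statement. Write the given simple object as $X = S_{Z,U}$, where $Z$ is a representative of a $G$-orbit in $\Irr(\C)$ and $U$ is an irreducible $\alpha_Z$-projective representation of the inertia subgroup $G_Z$. By the dimension formula \eqref{dim-equiv} we have $\FPdim X = [G:G_Z]\,(\dim U)\,\FPdim Z$. Since all the factors on the right are positive integers, both $[G:G_Z]$ and $\dim U$ divide $\FPdim X$, and hence both are relatively prime to $|G|$.

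First I would show that $G_Z = G$. The index $[G:G_Z]$ divides $|G|$ by Lagrange's theorem, while it also divides $\FPdim X$; as $\FPdim X$ is coprime to $|G|$, this forces $[G:G_Z] = 1$. Consequently the $G$-orbit of $Z$ is trivial, i.e. $\rho^g(Z) \cong Z$ for all $g \in G$.

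Next I would show that $\dim U = 1$. The number $\dim U$ is the degree of an irreducible $\alpha_Z$-projective representation of $G_Z = G$, and the crucial input is that such a degree divides $|G|$. This can be obtained by lifting $U$ to an ordinary irreducible representation $\tilde U$ of a representation group (Schur cover) $\tilde G$ of $G$: here $\tilde G$ fits into a central extension $1 \to Z_0 \to \tilde G \to G \to 1$ with $Z_0$ central and $[\tilde G : Z_0] = |G|$, and $\dim \tilde U = \dim U$. Applying Ito's theorem to the abelian (indeed central, hence normal) subgroup $Z_0 \le \tilde G$ yields $\dim U = \dim \tilde U \mid [\tilde G : Z_0] = |G|$. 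Since $\dim U$ also divides $\FPdim X$, which is coprime to $|G|$, we conclude $\dim U = 1$. Establishing that the degree of an irreducible projective representation divides the group order is the one nontrivial point of the argument; everything else is bookkeeping with the dimension formula.

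Finally I would identify $F(X)$. Since $F(S_{Z,U})$ is a direct sum of conjugates of $Z$ and all conjugates of $Z$ are isomorphic to $Z$ (because $G_Z = G$), we have $F(X) \cong Z^{\oplus m}$ for some $m \ge 1$. The forgetful functor $F$ is a tensor functor and therefore preserves Frobenius-Perron dimension, so $m\,\FPdim Z = \FPdim F(X) = \FPdim X = (\dim U)\,\FPdim Z = \FPdim Z$, whence $m = 1$. Therefore $F(X) \cong Z$ is a simple object of $\C$, as claimed.
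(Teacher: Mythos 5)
Your proof is correct and follows essentially the same route as the paper's: write $X \cong S_{Z,U}$, use the dimension formula $\FPdim X = [G:G_Z]\,\dim U\,\FPdim Z$, and observe that $[G:G_Z]\dim U$ divides $|G|$ while also dividing $\FPdim X$, forcing $[G:G_Z]\dim U = 1$ and hence $F(X) \cong Z$. The only difference is one of detail: the paper simply asserts the standard fact that $[G:G_Z]\dim U$ divides $|G|$, whereas you supply its proof (Schur cover plus Ito's theorem) and separate the conclusion into the two steps $G_Z = G$ and $\dim U = 1$.
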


\begin{proof} Let $(Z, U)$ be the pair corresponding to the simple object $X$, so that $X \cong S_{Z, U}$.
Since the number $[G:G_Z] \dim U$ divides the order of $G$, which by assumption is relatively prime to $\FPdim X$, then $[G:G_Z] \dim U  =1$ and $\FPdim F(X) = \FPdim X = \FPdim Z$.
Hence $F(X) = Z$ is a simple object of $\C$, as claimed.
\end{proof}

The following proposition will be used in the proof of our main results. It relies on a theorem of Michler \cite[Theorem 5.4]{michler} whose proof relies in turn on the classification of finite simple groups.

\begin{proposition}\label{ext-nonab} Let $G$ be a nonabelian finite group and let $q$ be a prime number. Suppose that $q$ is a vertex of the graph $\Delta(\C^G)$ such that $q \notin \pi(\cd G)$. Then one of the following holds:
\begin{enumerate}
\item[(i)] There exists $p \in \pi(\cd G)$ with $d(p, q) \leq 1$, or
\item[(ii)] $G$ has a nontrivial normal abelian Sylow subgroup.
\end{enumerate}
In particular, if $G$ does not satisfy (ii), then the number of connected components of $\Delta(\C^G)$ is at most equal to the number of connected components of $\Delta(G)$. \end{proposition}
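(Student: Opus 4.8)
The plan is to split according to whether the prime $q$ divides $|G|$, and in the relevant case to manufacture a simple object of $\C^G$ whose Frobenius–Perron dimension is divisible by both $q$ and some prime $p \in \pi(\cd G)$; such an object witnesses an edge in $\Delta(\C^G)$ between $q$ and $p$, which is exactly conclusion (i). First, if $q \mid |G|$, then the hypothesis $q \notin \pi(\cd G)$ together with the results of Michler \cite[Theorem 5.4]{michler} and Ito \cite{ito} recalled in Section~\ref{preliminaries} says that $G$ has a normal abelian Sylow $q$-subgroup; since $q \mid |G|$ this subgroup is nontrivial, so (ii) holds. Hence I would assume from now on that $q \nmid |G|$, and, aiming at (i), that $G$ does \emph{not} satisfy (ii). Again by Michler's theorem the failure of (ii) means that every prime divisor of $|G|$ lies in $\pi(\cd G)$; as character degrees divide $|G|$ the reverse inclusion is automatic, so under these assumptions $\pi(\cd G) = \pi(|G|)$. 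The point of this reduction is that it now suffices to produce an edge from $q$ to \emph{any} prime dividing $|G|$.

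Next I would locate where $q$ comes from. As $q \in \vartheta(\Delta(\C^G))$, there is a simple object $X \cong S_{Z,U}$ with $q \mid \FPdim X = [G:G_Z]\,\dim U\,\FPdim Z$ by \eqref{dim-equiv}. Since $[G:G_Z]\dim U$ divides $|G|$ (as already used in the proof of Lemma~\ref{simple-relprime}) and $q \nmid |G|$, the prime $q$ must divide $\FPdim Z$, where $Z$ is a simple object of the fixed $G$-orbit underlying $X$.

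Now I would split according to the inertia subgroup $G_Z$. If $G_Z \neq G$, pick any prime $p$ dividing $[G:G_Z]$; then $p \mid |G|$, so $p \neq q$ and, by the reduction above, $p \in \pi(\cd G)$. Since $p \mid [G:G_Z]$ and $q \mid \FPdim Z$, formula \eqref{dim-equiv} gives $pq \mid \FPdim S_{Z,U}$, so $d(p,q) = 1$ and (i) holds. If instead $Z$ is $G$-invariant, i.e. $G_Z = G$, then the simple objects over $Z$ are the $S_{Z,W}$ with $W$ an irreducible $\alpha_Z$-projective representation of $G$, and $\FPdim S_{Z,W} = \dim W \, \FPdim Z$. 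Here I would argue that some such $W$ has $\dim W > 1$: the number of the $W$'s equals the number of $\alpha_Z$-regular classes, hence is at most the number of conjugacy classes of $G$, which is strictly smaller than $|G|$ because $G$ is nonabelian; as $\sum_W (\dim W)^2 = |G|$, not all $\dim W$ can equal $1$. Finally, any prime $p$ dividing such a $\dim W$ divides $|G|$, because the degrees of the irreducible $\alpha_Z$-projective representations of $G$ divide $|G|$ — this follows from Ito's theorem \cite{ito} applied to a finite central extension $1 \to C \to \widetilde{G} \to G \to 1$ on which $\alpha_Z$ becomes a coboundary, since $C$ is then an abelian normal subgroup with $[\widetilde{G}:C] = |G|$. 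Then $p \in \pi(\cd G)$ by the reduction, $p \neq q$, and $pq \mid \FPdim S_{Z,W}$, so again $d(p,q) = 1$ and (i) holds.

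The main obstacle I anticipate is precisely this last sub-case, where $Z$ is $G$-invariant and the $2$-cocycle $\alpha_Z$ is nontrivial: there the relevant dimensions $\dim W$ are degrees of genuinely projective representations, and the argument hinges on the fact that these degrees still divide $|G|$; the rest is bookkeeping together with the translation of (ii) into $\pi(\cd G) = \pi(|G|)$. For the final assertion, I would recall that $\Delta(G)$ is a subgraph of $\Delta(\C^G)$ with vertex set $\pi(\cd G)$; if (ii) fails, then by what was just proved every vertex of $\Delta(\C^G)$ either lies in $\pi(\cd G)$ or is adjacent to a vertex of $\pi(\cd G)$, so every connected component of $\Delta(\C^G)$ meets $\vartheta(\Delta(G))$. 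Sending each component of $\Delta(G)$ to the component of $\Delta(\C^G)$ containing it then yields a surjection, whence $\Delta(\C^G)$ has at most as many connected components as $\Delta(G)$.
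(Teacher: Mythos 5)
Your proof is correct, but it takes a genuinely different route from the paper's. The paper fixes a simple object $X$ with $q \mid \FPdim X$ and splits on whether $\FPdim X$ has a prime factor $p$ in common with $|G|$: if so, Michler's theorem \cite{michler} yields (i) or (ii) at once; if not, Lemma~\ref{simple-relprime} shows $F(X)$ is simple in $\C$, and the categorical Gallagher theorem (Corollary~\ref{gallagher-exact}) then makes $Y \otimes X$ simple for any $Y \in \Irr(G)$ of degree divisible by some $p \in \pi(\cd G)$ (nonempty since $G$ is nonabelian), so $pq \mid \FPdim(Y \otimes X)$ gives the edge. You instead split on whether $q \mid |G|$, convert the failure of (ii) into the equality $\pi(\cd G) = \pi(|G|)$ via Michler and Ito, and then produce the edge entirely inside the parameterization \eqref{dim-equiv}: either the index $[G:G_Z]$ supplies the prime $p$, or, when $Z$ is $G$-stable, classical projective representation theory (degrees of irreducible $\alpha_Z$-projective representations divide $|G|$, their squared degrees sum to $|G|$, and their number is at most the number of conjugacy classes, which is smaller than $|G|$ for nonabelian $G$) supplies some $W$ with $\dim W > 1$, hence a simple object $S_{Z,W}$ with $pq \mid \FPdim S_{Z,W}$. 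What the paper's route buys is brevity and reuse: the Gallagher analogue is the paper's advertised key tool and serves again in Theorems~\ref{equiv-solv} and \ref{thm-leq3}, and nothing about twisted group algebras is needed beyond the divisibility fact already embedded in Lemma~\ref{simple-relprime}. What your route buys is independence from the Gallagher machinery --- no simplicity of tensor products is ever invoked --- together with the clean intermediate observation that the failure of (ii) forces $\pi(\cd G) = \pi(|G|)$, at the cost of importing standard but finer facts about twisted group algebras ($\alpha$-regular classes and the dimension count), which you do justify correctly via a finite central extension and Ito's theorem. Both arguments rest on Michler's theorem, hence on the classification of finite simple groups, and your treatment of the final assertion on the number of connected components coincides with the paper's.
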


\begin{proof} Let  $X \in \C^G$ be  a simple object such that $q$ divides $\FPdim X$.
Suppose that $\FPdim X$ is divisible by some prime factor $p$ of the order of $G$. Then either $p \in \pi(\cd G)$, in which case (i) holds, or, by Michler's Theorem \cite[Theorem 5.4]{michler}, $G$ has a normal abelian Sylow $p$-subgroup, and thus (ii) holds.

\medbreak Thus we may assume that $\FPdim X$ is relatively prime to the order of $G$. Hence $F(X)$ is a simple object of $\C$, by Lemma \ref{simple-relprime}.

On the other hand, since $G$ is  nonabelian, $\pi(\cd G) \neq \emptyset$. Let $p \in \pi(\cd G)$, and let $Y$ be a simple object of $\Rep G = \KER_F$  whose dimension is divisible by $p$.
It follows from Corollary \ref{gallagher-exact} that $Y\otimes X$ is a simple object of $\C^G$. Since $pq$ divides $\FPdim (Y \otimes X)$, then $d(q, p) = 1$ in $\Delta(\C^G)$. Therefore (i) holds in this case.

\medbreak Finally observe that, if $G$ has no nontrivial normal abelian Sylow subgroup, then every vertex of $\Delta(\C^G)$ is connected to some vertex of $\Delta(G)$, whence the statement on the number of connected components.   This finishes the proof of the theorem. \end{proof}

\subsection{Equivariantization under the action of a nonabelian group}
We continue to assume in this subsection that $G$ is a finite group and $\C$ is an integral fusion category endowed with an action by tensor autoequivalences $\underline{G} \to \underline{\Aut}_{\otimes}(\C)$.
We shall now apply the results obtained so far in order to study the prime graph of $\C^G$, where $G$ is  nonabelian.

\begin{remark}\label{rmk-action} We shall use repeatedly in what follows the following fact. Suppose $N$ is a normal subgroup of $G$. The action $\rho: \underline G \to \underline{\Aut}_\otimes(\C)$ induces, by restriction, an action of $N$ on $\C$ by tensor autoequivalences. Moreover, there is an action by tensor autoequivalences of quotient group $G/N$ on the equivariantization $\C^N$, such that  $\C^G$ is equivalent $(\C^N)^{G/N}$ as tensor categories.  See \cite[Corollary 5.4]{br-bu}, \cite[Example 2.9]{DMNO}.

Consider the fusion subcategory $\Rep N \subseteq \C^N$. It follows from the proof of \cite[Corollary 5.4]{br-bu} that the restriction of the $G/N$-action to the fusion subcategory $\Rep N \subseteq \C^N$ is equivalent to the adjoint action of $G$ on $\Rep N$, induced by the restriction functor $\Rep G \to \Rep N$.
\end{remark}

\begin{lemma}\label{ext-sol-p} Suppose that $G$ is a nonabelian finite $p$-group, where $p$ is a prime number.
Then $d(p, q) \leq 1$ for all vertex $q$ of the graph $\Delta(\C^G)$. In particular, the graph $\Delta(\C^G)$ is connected and its diameter is at most $2$. \end{lemma}

\begin{proof} A nonabelian $p$-group has no nontrivial abelian Sylow subgroups. Hence the statement follows immediately from Proposition \ref{ext-nonab}. \end{proof}

\begin{proposition}\label{equiv-nilp} Suppose that $G$ is a nonabelian nilpotent group. Then the graph $\Delta(\C^G)$ is connected.
\end{proposition}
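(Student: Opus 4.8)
The plan is to exploit the nilpotency of $G$ in order to realize $\C^G$ as an equivariantization of an integral fusion category under a \emph{nonabelian $p$-group}, so that the already-established Lemma \ref{ext-sol-p} applies at once. First I would use the structure theorem for finite nilpotent groups: $G \cong P_1 \times \cdots \times P_k$, where $P_i$ denotes the Sylow $p_i$-subgroup of $G$. Since $G$ is nonabelian, at least one of these factors, say $P = P_i$ with $p = p_i$, is a nonabelian $p$-group. Let $N = \prod_{j \neq i} P_j$ be the product of the remaining Sylow subgroups; then $N$ is a direct factor of $G$, hence a normal subgroup, and the quotient $G/N \cong P$ is again a nonabelian $p$-group.

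Next I would invoke Remark \ref{rmk-action}. The action of $G$ on $\C$ restricts to an action of $N$, and the equivariantization $\C^N$ is again an integral fusion category (the equivariantization of an integral category is integral, as recorded earlier in Section \ref{graph-equiv}). Moreover there is an induced action of $G/N \cong P$ on $\C^N$ by tensor autoequivalences, together with an equivalence of tensor categories $\C^G \cong (\C^N)^{G/N} \cong (\C^N)^{P}$. Applying Lemma \ref{ext-sol-p} to the nonabelian $p$-group $P$ acting on the integral fusion category $\C^N$ then gives $d(p, q) \leq 1$ for every vertex $q$ of $\Delta\big((\C^N)^{P}\big) = \Delta(\C^G)$; in particular $\Delta(\C^G)$ is connected (indeed of diameter at most $2$), which is exactly the assertion of the proposition.

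There is essentially no analytic content here: the whole point is the reduction, and the only step demanding a little care is the choice of the \emph{quotient} rather than a subgroup. One must split off the nonabelian Sylow factor as $G/N$ with $N$ normal, so that $P$ appears as the group performing the outer equivariantization and the hypotheses of Lemma \ref{ext-sol-p} are literally met. I expect this matching of the iterated-equivariantization structure $\C^G \cong (\C^N)^{G/N}$ to the nonabelian $p$-group quotient to be the only place where one could slip; everything else, including the fact that $p$ is genuinely a vertex of $\Delta(\C^G)$ (since $\Rep P = \KER_F$ embeds in $\C^G$ and a nonabelian $p$-group has an irreducible representation of degree divisible by $p$), is automatic.
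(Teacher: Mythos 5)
Your proof is correct and follows essentially the same route as the paper: the paper also writes $G \cong S_p \times L$ with $S_p$ a nonabelian Sylow $p$-subgroup, identifies $\C^G \cong (\C^L)^{S_p}$ via Remark \ref{rmk-action}, and concludes by Lemma \ref{ext-sol-p}. Your additional remarks on the integrality of $\C^N$ and on $p$ being a vertex of $\Delta(\C^G)$ are correct but are implicit in the paper's argument.
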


\begin{proof} Since $G$ is nilpotent, then $G$ is the direct product of its Sylow subgroups. Because $G$ is not abelian, we may assume that there is a prime number $p$ such that the Sylow $p$-subgroup $S_p$ is not abelian. Let $L = G/S_p$, so that $G \cong S_p \times L$. Then, by Remark \ref{rmk-action}, the group $S_p$ acts by tensor autoequivalences on the equivariantization $\C^L$ and there is an equivalence of fusion categories $\C^G \cong (\C^L)^{S_p}$. Lemma \ref{ext-sol-p} implies the proposition.
\end{proof}

The proof of the following theorem follows the lines of the proof of an analogous result for the character degree graph of a finite group in \cite[Theorem 18.4]{manz-wolf}.

\begin{theorem}\label{equiv-solv} Suppose that the group
$G$ is  solvable nonabelian. Then the graph $\Delta(\C^G)$ has at most two connected components.
\end{theorem}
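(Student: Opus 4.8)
The plan is to induct on $|G|$, proving the statement in the form: for every integral fusion category $\C$ and every solvable nonabelian finite group $K$ acting on $\C$ by tensor autoequivalences, $\Delta(\C^K)$ has at most two connected components. Two cases are settled at once. If $G$ is nilpotent, then $\Delta(\C^G)$ is in fact connected by Proposition \ref{equiv-nilp}. If $G$ has no nontrivial normal abelian Sylow subgroup, then by the ``in particular'' clause of Proposition \ref{ext-nonab} the number of connected components of $\Delta(\C^G)$ is at most that of $\Delta(G)$, which is at most two because $G$ is solvable (Theorem \ref{ppties-graphg-1}(ii)).

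Thus I may assume that $G$ is not nilpotent and has a nontrivial normal abelian Sylow $p$-subgroup $P$. The next step is a reduction based on Remark \ref{rmk-action}: for any nontrivial proper normal subgroup $N$ of $G$ there is an equivalence $\C^G \cong (\C^N)^{G/N}$, and $\C^N$ is again integral. If some such $N$ can be chosen with $G/N$ nonabelian, then, since $|G/N| < |G|$ and $G/N$ is solvable, the inductive hypothesis applied to the action of $G/N$ on $\C^N$ finishes the proof. An elementary computation shows that no such $N$ exists precisely when the commutator subgroup $G'$ is the unique minimal normal subgroup of $G$, so that every nontrivial normal subgroup contains $G'$.

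This monolithic case is the heart of the matter. Here $G' \subseteq P$ (as $G/P$ is abelian), so $\operatorname{soc}(G) = G'$ is an elementary abelian $p$-group; since $G$ is not nilpotent, $G'$ is noncentral, and this forces $C_P(G/P) = P \cap Z(G) = 1$ (a nontrivial intersection would give a central, hence distinct, minimal normal subgroup). By coprime complete reducibility together with the uniqueness of the minimal normal subgroup one gets $P = G'$ irreducible, whence $H := G/P$ is cyclic and acts fixed-point-freely on $P \smallsetminus \{0\}$; that is, $G = P \rtimes H$ is a Frobenius group with abelian kernel $P$ and complement $H$. In particular $\cd(G) = \{1, |H|\}$, so $\pi(\cd G) = \pi(|H|)$ and $\Delta(G)$ is a complete, hence connected, subgraph of $\Delta(\C^G)$; thus $\pi(\cd G)$ lies inside a single connected component.

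It then remains to control the remaining (``new'') vertices $q \in \pi(\cd \C^G) \smallsetminus \pi(|H|)$, and the plan is a hub argument: I will show that any such $q$ not joined by a path to $\pi(|H|)$ is necessarily adjacent to the single prime $p$. Indeed, let $X = S_{Z,U}$ be simple with $q \mid \FPdim X = [G:G_Z]\,\dim U\,\FPdim Z$, using \eqref{dim-equiv}. If any $\ell \in \pi(|H|)$ divided $\FPdim X$, then $q$ and $\ell$ would be adjacent, connecting $q$ to $\pi(|H|)$; so $\gcd(\FPdim X, |H|) = 1$. If in addition $p \nmid \FPdim X$, then $\FPdim X$ is coprime to $|G| = |P|\,|H|$, so $F(X)$ is simple by Lemma \ref{simple-relprime}; tensoring with a nonlinear irreducible $Y \in \Rep G = \KER_F$ of degree $|H|$ and applying Corollary \ref{gallagher-exact} yields a simple object $Y \otimes X$ of Frobenius--Perron dimension $|H|\,\FPdim X$, which again joins $q$ to every prime of $\pi(|H|)$, a contradiction. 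Hence $p \mid \FPdim X$, so $pq \mid \FPdim X$ and $q$ is adjacent to $p$. Consequently every new vertex lies in the component of $\pi(|H|)$ or in the component of $p$, and $\Delta(\C^G)$ has at most two connected components. The main obstacle I expect is the structural reduction to the Frobenius case and the ensuing realization that, there, all potentially stray primes are forced to attach to the unique kernel prime $p$.
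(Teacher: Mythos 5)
Your proof is correct, and its core coincides with the paper's: both reduce, via Remark \ref{rmk-action}, to the ``monolithic'' case in which every proper nontrivial quotient of $G$ is abelian, identify $G$ there as a Frobenius group with elementary abelian kernel and abelian complement $H$ (so $\cd(G)=\{1,|H|\}$ and $\Delta(G)$ is complete on $\pi(|H|)$), and then run the identical hub argument: for a vertex $q$ coming from a simple object $X$, either some prime of $|H|$ or the kernel prime divides $\FPdim X$, or else $\FPdim X$ is coprime to $|G|$, in which case Lemma \ref{simple-relprime} together with Corollary \ref{gallagher-exact} produces the simple object $Y\otimes X$ of dimension $|H|\,\FPdim X$ joining $q$ to all of $\pi(|H|)$. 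The differences are organizational rather than conceptual. First, you induct on $|G|$, whereas the paper achieves the same reduction in a single step by choosing $N$ maximal among normal subgroups with $G/N$ nonabelian, so that every proper quotient of $L=G/N$ is automatically abelian and the monolithic case applies directly to $L$ acting on $\C^N$; no induction is needed. Second, inside the monolithic case the paper disposes of prime-power order via Lemma \ref{ext-sol-p} and then cites \cite[Lemma 12.3]{isaacs} for the Frobenius structure, whereas you first peel off nilpotent groups (Proposition \ref{equiv-nilp}) and groups with no nontrivial normal abelian Sylow subgroup (Proposition \ref{ext-nonab} plus Theorem \ref{ppties-graphg-1}(ii)) so as to guarantee a normal abelian Sylow subgroup, and then rederive the relevant portion of Isaacs' lemma by elementary arguments (coprime complete reducibility, faithfulness and irreducibility forcing $H$ cyclic and fixed-point-free). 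Your route is more self-contained on the character-theory side but threads through the Ito--Michler input hidden in Proposition \ref{ext-nonab}; the paper's route is shorter and keeps this theorem independent of that proposition. Both are valid proofs.
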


\begin{proof}
We consider first the case where $G/L$ is abelian, for every nontrivial normal subgroup $L \subseteq G$.
Note that this assumption is equivalent to the requirement that $G' \subseteq G$ is the unique minimal normal subgroup of $G$.

By Lemma \ref{ext-sol-p}, we may assume that $G$ is not of prime power order. Then it follows from \cite[Lemma 12.3]{isaacs}, that $G$ has a unique irreducible degree $d > 1$. Furthermore, $G$ is a Frobenius group with Frobenius kernel $G'$ which is an elementary abelian $r$-group, for some prime number $r$, and Frobenius complement $H$ which is abelian of order $d$.
Observe that $|G| = r^nd$, for some $n \geq 1$. In addition, the vertex set of $\Delta(G)$ coincides with the set $\pi(d)$ of prime divisors of $d$ and $\Delta(G)$ is the complete graph on the set $\pi(d)$.

\medbreak Let $p$ be a vertex of $\Delta(\C^G)$, and let $X$ be a simple object of $\C^G$ such that $p \vert \FPdim X$. If $r | \FPdim X$, then $r$ is also a vertex of $\Delta(\C^G)$ and $d(p, r) \leq 1$.

Assume that $\FPdim X$ is not divisible by $r$. We shall show that every prime divisor of $\FPdim X$ is connected by a path with every vertex of $\Delta(G)$.
Suppose first that $\FPdim X$ is not relatively prime to $d$, then it is divisible by some prime number $q$ in $\pi(\cd G)$ and  $d(p, q) \leq 1$. Since $\Delta(G)$ is a complete graph, this implies that $p$ is connected by a path with every vertex $t$ of $\Delta(G)$.

 Otherwise, $\FPdim X$ is relatively prime to the order of $G$. Hence $F(X)$ is a simple object of $\C$, by Lemma \ref{simple-relprime}. Let $Y \in \Irr(G)$ of dimension $d$. Corollary \ref{gallagher-exact} implies that $Y \otimes X$ is a simple object of $\C^G$. Since $\FPdim (Y \otimes X)$ is divisible by $pq$, for all prime number $q \in \pi(\cd G)$, we get that $d(p, q) \leq 1$, for all vertex $q$ of $\Delta(G)$. This shows that $\Delta(\C^G)$ has at most $2$ connected components in this case.

\medbreak We now consider the general case. Let $N \subseteq G$ be a maximal normal subgroup such that the quotient $L = G/N$ is nonabelian. As pointed out in Remark \ref{rmk-action}, the group $L$ acts on the equivariantization $\C^N$ by tensor autoequivalences in such a way that  $\C^G$ is equivalent $(\C^N)^L$ as tensor categories. On the other hand, the maximality of $N$ implies that every proper quotient of the group $L$ is abelian. Then, by the first part of the proof, the prime graph of $(\C^N)^L \cong \C^G$ has at most $2$ connected components. This finishes the proof of the theorem.
\end{proof}

We shall need the following lemma, which is contained in the proof of  \cite[Proposition 2]{manz-sw}.

\begin{lemma}\label{lema-msw} Let $G$ be a  finite group and let $q$ be a prime number. Suppose that $N$ is a nonsolvable normal subgroup of $G$ of $q$-power index and $Q \subseteq N$ is a normal $q$-Sylow subgroup of $N$. If $Q_0 \subseteq G$ is a $q$-Sylow subgroup such that every nonlinear irreducible representation of $N/Q$ is stable under $Q_0/Q$, then $Q_0$ is normal in $G$. \end{lemma}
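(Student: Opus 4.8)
The plan is to reduce the statement, by passing to the quotient by $Q$, to a pure statement about a coprime action, and then to apply character-theoretic facts about such actions. First I would record that, since $Q$ is a normal Sylow $q$-subgroup of $N$, it is the unique one and hence characteristic in $N$; as $N\trianglelefteq G$ this gives $Q\trianglelefteq G$, and $Q$ lies in every Sylow $q$-subgroup of $G$, in particular $Q\subseteq Q_0$. Write $\overline{G}=G/Q$, $\overline{N}=N/Q$ and $A=Q_0/Q$. Counting orders (using that $[G:N]$ and $|A|$ are $q$-powers while $|\overline N|$ is prime to $q$) shows $Q_0\cap N=Q$ and $Q_0N=G$, so $\overline G=\overline N\rtimes A$ with $\overline N\trianglelefteq\overline G$ of order prime to $q$ and $A$ a Sylow $q$-subgroup. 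Moreover $\overline N$ is nonsolvable, since otherwise $N$, an extension of the $q$-group $Q$ by $\overline N$, would be solvable. In this semidirect product $A$ is normal if and only if $[\overline N,A]\subseteq \overline N\cap A=1$, that is, if and only if the (coprime) conjugation action of $A$ on $\overline N$ is trivial; and $Q_0\trianglelefteq G$ is equivalent to $A\trianglelefteq\overline G$. Thus it suffices to prove that $A$ acts trivially on $\overline N$.

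The hypothesis says precisely that $A$ fixes every nonlinear character in $\Irr(\overline N)$. I would invoke the standard fact that, for a coprime action, a group fixing every irreducible character acts trivially; this follows from Brauer's permutation lemma, which matches the $A$-action on $\Irr(\overline N)$ with the $A$-action on conjugacy classes (see \cite{isaacs}). Hence it is enough to upgrade the hypothesis and show that $A$ also fixes every \emph{linear} character, equivalently that $A$ acts trivially on the abelianization $\overline N/\overline N'$.

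The mechanism for this is a multiplication trick. Suppose some $a\in A$ moved a linear character $\lambda$, and set $\mu=\lambda^a\lambda^{-1}$, a nontrivial linear character with kernel $K=\ker\mu\subsetneq\overline N$; note $\overline N'\subseteq K\trianglelefteq\overline N$ and $\overline N/K$ is cyclic of order $m\ge 2$. For any nonlinear $\chi\in\Irr(\overline N)$ the product $\lambda\chi$ is again a nonlinear irreducible character, so both $\chi$ and $\lambda\chi$ are $A$-fixed; since $\chi^a=\chi$ we have $(\lambda\chi)^a=\lambda^a\chi=\mu\lambda\chi$, and comparing with $(\lambda\chi)^a=\lambda\chi$ forces $\mu\chi=\chi$, i.e. $\chi$ vanishes on $\overline N\setminus K$. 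Using Clifford theory for the cyclic quotient $\overline N/K$ (where invariant characters extend and there is no ramification), this vanishing forces every $\overline N$-invariant $\vartheta\in\Irr(K)$ to be linear: an invariant $\vartheta$ with $\vartheta(1)>1$ would extend to a nonlinear $\hat\vartheta\in\Irr(\overline N)$ with $\hat\vartheta|_K=\vartheta$ irreducible, so that $\langle \hat\vartheta|_K,\hat\vartheta|_K\rangle_K=1$; but a character vanishing on $\overline N\setminus K$ satisfies $\langle \hat\vartheta|_K,\hat\vartheta|_K\rangle_K=[\overline N:K]=m\ge 2$, so $\hat\vartheta$ cannot vanish off $K$, contradicting the previous sentence. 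Hence $K$ carries no $\overline N$-invariant nonlinear irreducible character.

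The main obstacle is to derive a contradiction from this last conclusion, and it is exactly here that nonsolvability is used. Since $\overline N/K$ is solvable, $K$ is nonsolvable (its solvable residual coincides with that of $\overline N$, a nontrivial perfect characteristic subgroup), and in particular nonabelian. I would show that such a $K$ must possess an $\overline N$-invariant nonlinear irreducible character, contradicting the conclusion above; the cleanest route is again Brauer's permutation lemma, now applied to the cyclic group $\overline N/K$ acting on $K$, comparing the number of invariant irreducible characters with the number of invariant conjugacy classes and observing that the invariant \emph{linear} characters only account for the invariants arising from the abelian quotient $K/K'$, so that a nonsolvable $K$ is forced to carry an invariant nonlinear character. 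This contradiction rules out the existence of $a$ and $\lambda$, so $A$ acts trivially on $\overline N/\overline N'$; combined with the hypothesis and the coprime-action fact, $A$ then fixes all of $\Irr(\overline N)$ and therefore acts trivially on $\overline N$, yielding $Q_0\trianglelefteq G$.
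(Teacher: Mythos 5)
Your reduction is clean up to the last paragraph: the setup ($Q\trianglelefteq G$, $\overline G=\overline N\rtimes A$, the equivalence of $Q_0\trianglelefteq G$ with triviality of the coprime action of $A$ on $\overline N$), the multiplication trick producing $\mu=\lambda^a\lambda^{-1}$ and forcing every nonlinear $\chi\in\Irr(\overline N)$ to vanish off $K=\ker\mu$, and the extension/norm computation showing that $K$ then carries no $\overline N$-invariant nonlinear irreducible character are all correct. The genuine gap is the final step, and it is exactly where all the difficulty of the lemma sits. The counting you propose would need the inequality $\#\{\overline N\text{-invariant classes of } K\} > \#\{\overline N\text{-invariant elements of } K/K'\}$, and this is \emph{not} a formal consequence of $K$ being nonabelian: for the Frobenius group $K=\mathbb Z_7\rtimes\mathbb Z_3$ of order $21$ with the order-$2$ automorphism inverting $\mathbb Z_7$ and fixing $\mathbb Z_3$ pointwise, the two degree-$3$ characters are swapped while all three linear characters are fixed, so the invariant irreducible characters are precisely the linear ones and equality holds. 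Hence nonsolvability of $K$ must enter in some structural, non-counting way, and your sketch provides no mechanism for it. Worse, the action of the cyclic group $\overline N/K$ on $K$ is \emph{not} coprime (the coprimality was between $A$ and $\overline N$; the group $A$ has disappeared from this subproblem), so you cannot fall back on coprime-action theorems here. What you actually need is the statement that a nonsolvable group cannot have all of its nonlinear irreducible characters vanishing off a proper normal subgroup with cyclic quotient; this is true, but it is a substantial theorem (the arguments I know pass through Thompson's normal $p$-complement theorem, nilpotency of Frobenius kernels, and the Feit--Thompson theorem when the index is even). In other words, you have reduced the lemma to something at least as deep as the result it is meant to rest on, and then asserted it.

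For comparison, the paper handles this point by quoting Isaacs \cite[Theorem A]{isaacs2}, on coprime actions fixing all nonlinear irreducible characters: if $Q_0/Q$ acted nontrivially on $N/Q$, that theorem would force $(N/Q)'$ to be nilpotent, hence $N/Q$ and $N$ solvable, a contradiction; so $Q_0/Q$ centralizes $N/Q$, whence $N\subseteq N_G(Q_0)$ and $G=NQ_0\subseteq N_G(Q_0)$. Your argument becomes correct if, at the moment you suppose some $a\in A$ moves a linear character (so that the action is nontrivial, coprime, and fixes all nonlinear characters), you invoke that same theorem instead of the Brauer-lemma counting; but the elementary replacement you propose for it does not work.
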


\begin{proof} Since $N$ is normal in $G$ and $Q$ is a characteristic subgroup of $N$, then $Q$ is a normal $q$-subgroup of $G$.  In particular, $Q \subseteq Q_0$ for any $q$-Sylow subgroup $Q_0$ of $G$. Suppose first that $Q_0/Q$ does not centralize $N/Q$ in $G/Q$. Then $Q_0/Q$ acts nontrivially on $N/Q$ by conjugation. Furthermore, this is a coprime action in the sense of \cite{isaacs2} and every nonlinear irreducible representation of $N/Q$ is stable under $Q_0/Q$, by assumption. It follows from \cite[Theorem A]{isaacs2} that the derived group $(N/Q)'$ is nilpotent and in particular, $N/Q$ is solvable. Hence $N$ must be solvable as well, against the assumption.

Therefore we get that $Q_0/Q$ and $N/Q$ centralize each other in $G/Q$. This implies that $N \subseteq N_G(Q_0)$. Since $N$ has $q$-power index, then $G = N Q_0 \subseteq N_G(Q_0)$ and therefore $Q_0$ is normal in $G$, as claimed.
\end{proof}

\begin{theorem}\label{thm-leq3} Let $G$ be a nonabelian finite group and let $\C$ be an integral fusion category endowed with an action by tensor autoequivalences $\underline{G} \to \underline{\Aut}_{\otimes}(\C)$.  Then the graph $\Delta(\C^G)$ has at most three connected components.
\end{theorem}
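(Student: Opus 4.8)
The goal is to bound the number of connected components of $\Delta(\C^G)$ by three when $G$ is an arbitrary nonabelian finite group. The strategy is to leverage Proposition~\ref{ext-nonab}, which controls how vertices of $\Delta(\C^G)$ attach to vertices of $\Delta(G)$, together with the classical fact (Theorem~\ref{ppties-graphg-1}(i)) that $\Delta(G)$ itself has at most three connected components. The subtlety, and the reason we cannot simply quote Proposition~\ref{ext-nonab}, is the escape clause (ii): when $G$ has a nontrivial normal abelian Sylow $p$-subgroup, a vertex $q$ of $\Delta(\C^G)$ with $q\notin\pi(\cd G)$ need not be connected to any vertex of $\Delta(G)$. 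So the heart of the proof is to analyze these ``stray'' primes $q$ that divide $\FPdim X$ for some simple $X\in\C^G$ but are not prime divisors of any irreducible degree of $G$.

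\textbf{Reduction and setup.} First I would dispose of the easy case: if $G$ has no nontrivial normal abelian Sylow subgroup, then by the final assertion of Proposition~\ref{ext-nonab} the number of components of $\Delta(\C^G)$ is at most that of $\Delta(G)$, which is $\leq 3$ by Theorem~\ref{ppties-graphg-1}(i), and we are done. So assume $G$ does have such a subgroup. I would let $q$ be a stray vertex, i.e.\ $q\in\vartheta(\Delta(\C^G))$ with $q\notin\pi(\cd G)$, and examine a simple $X\cong S_{Z,U}$ with $q\mid\FPdim X=[G:G_Z]\dim U\FPdim Z$. By Proposition~\ref{ext-nonab} (negating (i)), I may assume $\FPdim X$ is relatively prime to $|G|$, so $q\nmid|G|$ and, by Lemma~\ref{simple-relprime}, $F(X)$ is simple in $\C$. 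The primes not in $\pi(\cd G)$ correspond, via Ito--Michler, to normal abelian Sylow subgroups of $G$; the plan is to show all such stray primes cluster into a controlled number of extra components by using the normal subgroup structure and Remark~\ref{rmk-action} to pass to $(\C^N)^{G/N}$ for a suitable normal $N$.

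\textbf{The main argument.} This is where Lemma~\ref{lema-msw} enters, and I expect the decisive step to mirror the group-theoretic proof of \cite[Theorem~18.4]{manz-wolf}. The idea is to peel off a normal subgroup to reduce to the case already handled by Theorem~\ref{equiv-solv} (the solvable case) plus a single nonsolvable ``block.'' Concretely, I would consider a minimal nonsolvable normal subgroup, or more precisely use Lemma~\ref{lema-msw} to locate a normal $q$-Sylow subgroup whose associated quotient forces the stray prime $q$ to behave. If $N\trianglelefteq G$ is nonsolvable of $q$-power index with normal $q$-Sylow $Q$, then Lemma~\ref{lema-msw} tells us either $Q_0$ is normal in $G$ (giving a normal abelian—or at least controlled—Sylow, placing $q$ into a single predictable exceptional component), or some nonlinear irreducible of $N/Q$ is \emph{not} stable under $Q_0/Q$, which produces an irreducible degree of $G$ divisible by $q$, contradicting $q\notin\pi(\cd G)$. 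Running this dichotomy over the stray primes shows they can populate at most one additional component beyond those forced by attachment to $\Delta(G)$, and combined with the at-most-three bound for $\Delta(G)$ this yields the claim. The hard part will be the bookkeeping that guarantees the stray primes do not splinter into several isolated components: I must show that the nonsolvable composition structure, which by the classification of finite simple groups is quite rigid, permits at most one such exceptional cluster, and that any two stray primes arising from the \emph{same} nonsolvable section are mutually connected. This is precisely the point where the finite-group-theoretic input of \cite{manz-sw,manz-wolf} (itself reliant on the classification) is indispensable, and transplanting it to the equivariantization setting via Corollary~\ref{gallagher-exact} is the technical crux.
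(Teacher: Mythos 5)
Your opening reduction (if $G$ has no nontrivial normal abelian Sylow subgroup, finish via Proposition \ref{ext-nonab} and Theorem \ref{ppties-graphg-1}(i)) agrees with the paper, but beyond that point the proposal has genuine gaps, and one stated step is backwards. First, negating alternative (i) of Proposition \ref{ext-nonab} does \emph{not} let you assume $\FPdim X$ is coprime to $|G|$ --- it forces the opposite. If $\FPdim X$ were coprime to $|G|$, then $F(X)$ would be simple by Lemma \ref{simple-relprime}, and Corollary \ref{gallagher-exact}, applied with a nonlinear $Y\in\Irr(G)$ (which exists since $G$ is nonabelian), would give a simple object $Y\otimes X$ of dimension divisible by $pq$ with $p\in\pi(\cd G)$; that is precisely alternative (i). So under the negation of (i), $\FPdim X$ must share a prime with $|G|$, and your conclusion $q\nmid |G|$ is unjustified. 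Second, the main argument is never actually run: the normal subgroup $N$ of $q$-power index needed to invoke Lemma \ref{lema-msw} is not constructed (for a stray prime $q\nmid |G|$ no proper such subgroup can exist at all), the branch in which $Q_0$ is normal in $G$ is settled only by the phrase ``placing $q$ into a single predictable exceptional component,'' which is exactly the assertion to be proved, and even granting all of it, your final count --- at most one extra component on top of the at most three of $\Delta(G)$ --- yields a bound of four, not three.

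The paper's proof resolves these issues with a different architecture: induction on $|G|$. After the same reductions (Theorem \ref{equiv-solv} for solvable $G$, Proposition \ref{ext-nonab} otherwise), it takes $S$ to be the product of \emph{all} nontrivial normal Sylow subgroups of $G$ and chooses a maximal normal subgroup $N$ with $S\subseteq N\subsetneq G$ (possible since $G$ is nonsolvable); then $G/N$ is simple, $|G/N|$ is coprime to the primes of those Sylow subgroups, and $\C^G\cong (\C^N)^{G/N}$ by Remark \ref{rmk-action}. If $G/N$ is nonabelian simple, it has no nontrivial normal abelian Sylow subgroup, so Proposition \ref{ext-nonab} applied to the $G/N$-action on $\C^N$ together with Theorem \ref{ppties-graphg-1}(i) finishes. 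If $G/N\cong \mathbb Z_q$, then by induction $\Delta(\C^N)$ has at most three components; by formula \eqref{dim-equiv} the only possible new vertex of $\Delta(\C^G)$ is $q$ itself, so it suffices to connect $q$ to $\Delta(\C^N)$. If it is not connected, Michler's theorem gives a normal abelian $q$-Sylow $Q\subseteq N$, every nonlinear irreducible representation of $N/Q$ must be $G$-stable (an unstable one, $Y$, would give a simple object of dimension $q\dim Y$, creating the needed edge), and Lemma \ref{lema-msw} then makes the $q$-Sylow subgroup of $G$ normal --- a contradiction, because $N$ contains every normal Sylow subgroup of $G$ while $[G:N]=q$. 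Note the inversion relative to your sketch: in the paper the unstable-irreducible branch is the good outcome and the normal-$Q_0$ branch is the contradiction (made possible by the specific choice of $N$), whereas your proposal treats instability as the contradiction and leaves the normal-$Q_0$ branch --- the actual crux --- unresolved.
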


\begin{proof} The proof is by induction on the order of $G$. We follow the lines of the last part of the proof of \cite[Proposition 2]{manz-sw}. By Theorem \ref{equiv-solv} we may assume that $G$ is not solvable. By Proposition \ref{ext-nonab} we may further assume that $G$ contains a nontrivial normal abelian Sylow subgroup; otherwise the result follows from the corresponding result for groups; see Theorem \ref{ppties-graphg-1} (i).

\medbreak Let  $S_{p_1}, \dots S_{p_r}$ be the nontrivial normal (not necessarily abelian) Sylow subgroups of $G$, where $p_1, \dots, p_r$ are pairwise distinct prime divisors of the order of $G$ and $|S_{p_i}| = |G|_{p_i}$, $i = 1, \dots, r$. Let also  $S$ be the subgroup of $G$ generated by $S_{p_1}, \dots, S_{p_r}$. Then $S$ is a normal subgroup of $G$ isomorphic to the direct product $S_{p_1} \times \dots \times S_{p_r}$. Thus $S \subsetneq G$, because $G$ is not solvable by assumption.

Let $N \subsetneq G$ be a maximal normal subgroup such that $S \subseteq N \subseteq G$. So that the quotient $G/N$ is simple and its order is relatively prime to $p_i$, for all $i =1, \dots, r$.

As in the proof of Theorem \ref{equiv-solv}, there is an action of $G/N$ on the equivariantization $\C^N$ such that  $\C^G \cong (\C^N)^{G/N}$.

Suppose first that $G/N$ is nonabelian. Proposition \ref{ext-nonab} implies that the number of connected components of the prime graph of $(\C^N)^{G/N} \cong \C^G$ is at most equal to the number of connected components of $\Delta(G/N)$. Hence this number is at most three, by Theorem \ref{ppties-graphg-1} (i).

\medbreak Therefore we may assume that $G/N$ is cyclic of prime order $q$. In particular, $N$ is not solvable. Note that $q \neq p_i$, for all $i = 1, \dots, r$.
Since $|N| < |G|$, we may inductively assume that the graph $\Delta(\C^N)$ has at most three connected components.

Simple objects of $(\C^N)^{G/N}$ are parameterized by pairs $S_{Y, U}$, where $Y$ runs over a set of representatives of the orbits of the action  of $G/N$ on $\Irr(\C^N)$ and $U$ is an irreducible projective representation of the stabilizer of $Y$. Since $G/N$ is cyclic of order $q$, then $G_Y = 1$ or $G/N$. Hence, in view of Formula \eqref{dim-equiv}, for each $Y \in \Irr(\C^N)$, we have two possibilities:
$$(a)\; \FPdim S_{Y, U} = q \FPdim Y, \quad \text{ or } \quad (b)\; \FPdim S_{Y, U} = \FPdim Y.$$
Possibility (b) holds if and only if $Y$ is $G/N$-stable.
Furthermore, we may assume that there is a simple object of $\C^N$ which is not $G/N$-stable: otherwise, $\cd(\C^G) = \cd(\C^N)$ and therefore $\C^G$ and $\C^N$ have the same prime graph. Hence the result follows in this case.

\medbreak We thus obtain that $\vartheta(\Delta(\C^G)) = \{q\} \cup \vartheta(\Delta(\C^N))$. Then it will be enough to show that $q$ is connected with a vertex in $\Delta(\C^N)$ and use the inductive assumption on $\C^N$.

We may assume that $N$ has no nonlinear irreducible representation of dimension divisible by $q$ (if such a representation exists, then $q$ is connected to some vertex of $\Delta(N)$ which is a subgraph of $\Delta(\C^N)$).    Therefore, by Michler's theorem \cite[Theorem 5.4]{michler}, $N$ has a normal abelian $q$-Sylow subgroup $Q$. Note that $\vartheta(\Delta(N)) = \vartheta(\Delta(N/Q))$, by Ito's theorem.

Similarly, if $N$ has a nonlinear irreducible representation $Y$ with  $G_Y = 1$, then $\FPdim S_{Y, U} = q\dim Y$, and $q$ is connected with the prime divisors of $\dim Y$, hence we are done.

\medbreak Therefore we may assume that every nonlinear irreducible representation of $N/Q$ is stable under $G/Q$. Lemma \ref{lema-msw} implies that $G$ has a normal $q$-Sylow subgroup $Q_0$. The choice of the normal subgroup $N$ implies that $Q_0 \subseteq N$ and, in particular, $[G: N]$ is relatively prime to $q$, which is a contradiction. This shows that $q$ must be connected to some vertex of $\Delta(\C^N)$ and, by induction, we get that the graph $\Delta(\C^G)$ has at most three connected components.
The proof of the theorem in now complete.
\end{proof}

\subsection{Equivariantizations of pointed fusion categories}

Throughout this subsection let $G$ be a finite group and let $\underline G \to \underline{\Aut}_\otimes(\C)$ be an action of $G$ by tensor autoequivalences on a pointed fusion category $\C$. These actions are described by Tambara in \cite[Section 7]{tambara}. Let us recall the description here.

\medbreak Since $\C$ is pointed, we may assume that $\C = \C(A, \omega)$, where $A$ is the group of isomorphism classes of invertible objects in $\C$, and  $\omega\in H^3(A, k^*)$ is an invertible normalized 3-cocycle. Recall that $\C(A, \omega) = \vect^A_\omega$ is the category of finite dimensional $A$-graded vector spaces with associativity constraint induced by $\omega$.

\medbreak Any action $\rho: \underline G \to \underline \Aut_{\otimes} \C$ is determined by an action by group automorphisms of $G$ on $A$,  $x \mapsto {}^ga$, $a \in A$, $g \in G$, and two maps $\tau: G \times A \times A \to k^*$, and $\alpha: G \times G \times A \to k^*$, satisfying
\begin{align}\label{uno}\frac{\omega(a, b, c)}{\omega({}^ga, {}^gb, {}^gc)} & = \frac{\tau(g; ab, c) \, \tau(g; a, b)}{\tau(g; b, c) \, \tau(g; a, bc)} \\
\label{dos} 1 & = \frac{\alpha(h, l; a) \, \alpha(g, hl; a)}{\alpha(gh, l; a) \, \alpha(g, h; {}^la)} \\
\label{tres}\frac{\tau(gh; a, b)}{\tau(g; {}^ha, {}^hb) \, \tau(h; a, b)} & = \frac{\alpha(g, h; a) \, \alpha(g, h; b)}{\alpha(g, h; ab)},
\end{align} for all $a, b, c \in A$, $g, h, l \in G$.

Let us assume that, in addition, $\tau$ and $\alpha$ are normalized such that $\tau(g; a, b) = \alpha(g, h; a) = 1$, whenever some of the arguments $g$, $h$, $a$ or $b$ is an identity.

\medbreak Let us denote $\alpha_a(g, h): = \alpha(g, h; a)$ and $\tau_{a, b}(g): = \tau(g; a, b)$, $a, b \in A$, $g, h \in G$.
It follows from \eqref{dos} that, for all $a \in A$, $\alpha_a: G_a \times G_a \to k^*$ is a 2-cocycle.
Let $[\alpha_a] \in H^2(G_a, k^*)$ denote the cohomology class of $\alpha_a$.

Let $A^G = \{ a \in A: \, G_a = G\}$ be the subgroup of fixed points of $A$ under the action of $G$. Then we have

\begin{lemma}\label{group-homo} The assignment $a \mapsto [\alpha_a]$ defines a group homomorphism $A^G \to H^2(G, k^*)$.
\end{lemma}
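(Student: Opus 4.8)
The plan is to read off the homomorphism property directly from the cocycle-type identity \eqref{tres}, once it is restricted to fixed points. The computation is essentially formal; the only real content is the observation that restricting \eqref{tres} to $a,b \in A^G$ exhibits a certain product of cocycles as a coboundary.

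First I would record that $A^G$ is genuinely closed under multiplication, so that the assignment is defined on a group: since $G$ acts on $A$ by group automorphisms, $a, b \in A^G$ gives \({}^g(ab) = {}^g a \cdot {}^g b = ab\) for all $g \in G$, whence $ab \in A^G$. For the target, I would observe that when $a \in A^G$ we have $G_a = G$, so specializing \eqref{dos} to \({}^l a = a\) yields the ordinary $2$-cocycle identity
\[ \alpha_a(gh, l)\, \alpha_a(g, h) = \alpha_a(h, l)\, \alpha_a(g, hl), \qquad g, h, l \in G, \]
showing that $\alpha_a$ is a normalized $2$-cocycle on all of $G$; hence $[\alpha_a] \in H^2(G, k^*)$ is well defined, with no further choices involved since $\alpha$ is part of the given data.

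The key step is the computation of $[\alpha_{ab}]$ for $a, b \in A^G$. I would restrict \eqref{tres} to such a pair: using \({}^h a = a\) and \({}^h b = b\), the left-hand side becomes $\tau_{a,b}(gh)\,\big(\tau_{a,b}(g)\,\tau_{a,b}(h)\big)^{-1}$, where $\tau_{a,b}\colon G \to k^*$, $\tau_{a,b}(g) = \tau(g; a, b)$, is a $1$-cochain (normalized, since $\tau_{a,b}(e) = 1$ by the normalization assumption on $\tau$). Writing $\partial$ for the coboundary of a $1$-cochain, $(\partial\mu)(g,h) = \mu(g)\,\mu(h)\,\mu(gh)^{-1}$, the left-hand side is exactly $(\partial\tau_{a,b})(g, h)^{-1}$, a coboundary. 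Thus \eqref{tres} reads
\[ \frac{\alpha_a(g,h)\,\alpha_b(g,h)}{\alpha_{ab}(g,h)} = (\partial\tau_{a,b})(g,h)^{-1}, \qquad g, h \in G. \]

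Finally I would conclude: the displayed identity says that the $2$-cocycle $\alpha_a\,\alpha_b\,\alpha_{ab}^{-1}$ equals the coboundary $(\partial\tau_{a,b})^{-1}$, hence is trivial in cohomology, so $[\alpha_a]\,[\alpha_b] = [\alpha_{ab}]$ in $H^2(G, k^*)$. This is precisely the assertion that $a \mapsto [\alpha_a]$ is a group homomorphism $A^G \to H^2(G, k^*)$. The only point requiring care — the main obstacle, such as it is — is the bookkeeping that identifies the left-hand side of \eqref{tres} as a coboundary after setting \({}^h a = a\) and \({}^h b = b\); everything else is formal once one knows that $\alpha_a$ is a cocycle on all of $G$ and that $A^G$ is a subgroup.
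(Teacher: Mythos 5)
Your proof is correct and follows essentially the same route as the paper: restrict identity \eqref{tres} to $a,b \in A^G$, recognize the $\tau$-side as the coboundary of the $1$-cochain $\tau_{a,b}$, and conclude $[\alpha_a][\alpha_b]=[\alpha_{ab}]$. The extra bookkeeping you include (closure of $A^G$, the cocycle identity for $\alpha_a$ via \eqref{dos}, and the sign/inverse convention for the coboundary) is consistent with facts the paper records just before the lemma.
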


\begin{proof} Suppose $a, b \in A^G$. Then condition \eqref{tres} writes as follows:
$$(\alpha_a\alpha_b\alpha_{ab}^{-1}) (g, h) = \tau_{a, b}(gh)\tau_{a, b}^{-1}(g)\tau_{a, b}^{-1}(h) = d\tau_{a, b}(g, h),$$
for all $g, h \in G$, where $\tau_{a, b}: G \to k^*$ is the 1-cochain $g \mapsto \tau_{a, b}(g)$.
We obtain from this that $[\alpha_{ab}] = [\alpha_a] \, [\alpha_b] \in H^2(G, k^*)$, which proves the claim. \end{proof}

\medbreak The set of isomorphism classes of simple objects of $\C^G$ is parameterized by the isomorphism classes of simple objects $X_{a, U_a}$, where $a$ runs over the orbits of the action of $G$ on $A$ and $U_a$ is an irreducible projective representation of the stabilizer $G_a \subseteq G$ with factor set $\alpha_a$.  We shall say that the simple object $X_{a, U_a}$ \emph{lies over} $a \in A$. For every pair $(a, U_a)$ we have
\begin{equation}\label{dim-ep}\FPdim X_{a, U_a} = |^Ga| \, \dim U_a.\end{equation}

\begin{theorem}\label{eq-pted} The graph $\Gamma(\C(A, \omega)^G)$ has at most $3$ connected components. \end{theorem}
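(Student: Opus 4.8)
The plan is to compare the graph $\Gamma(\C(A,\omega)^G)$---equivalently, by Remark \ref{graphs-sets}, the prime graph $\Delta(\C(A,\omega)^G)$---with two graphs that we already control: the prime graph of the orbit sizes of the action of $G$ on $A$, bounded by the IP-graph, and the character degree graph $\Delta(G)$. The starting point is the dimension formula \eqref{dim-ep}: every simple object is $X_{a, U_a}$ with $\FPdim X_{a, U_a} = |{}^Ga|\,\dim U_a$. Since $|{}^Ga| = [G:G_a]$ divides $|G|$ and the degree $\dim U_a$ of an irreducible $\alpha_a$-projective representation of $G_a$ divides $|G_a|$, every Frobenius--Perron dimension divides $|G|$, so the vertex set of $\Delta(\C^G)$ lies in $\pi(|G|)$. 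First I would record two subgraphs of $\Delta(\C^G)$: the prime graph $\Delta(\mathcal{O})$ of the set of orbit sizes $\mathcal{O} = \{|{}^Ga|\}$ (its vertices and edges all divide realized dimensions), and the graph $\Delta(G)$, which arises from the objects $X_{\un, U}$, $U \in \Irr(G)$, that form the copy of $\Rep G = \KER_F$ inside $\C^G$ and satisfy $\FPdim X_{\un, U} = \dim U$.

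Next I would control the orbit primes through the IP-graph. Because $G$ acts on the group $A$ by automorphisms, Example \ref{orbits} (Corollary B of \cite{ipgraph}, resting on Theorem \ref{iptheorem}) gives that $\Delta(\mathcal{O})$ has at most two connected components. The key merging observation is that whenever $|{}^Ga| > 1$, the dimension $|{}^Ga|\,\dim U_a$ is divisible both by a prime of $\mathcal{O}$ and by every prime of $\dim U_a$; hence in $\Delta(\C^G)$ all primes dividing the dimension of a simple object with nontrivial orbit lie in a component meeting $\pi(\mathcal{O})$. Since passing from $\Delta(\mathcal{O})$ to $\Delta(\C^G)$ only adds edges among the vertices of $\pi(\mathcal{O})$, these primes still span at most two components of $\Delta(\C^G)$. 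Conversely, if $C$ is a connected component of $\Delta(\C^G)$ disjoint from $\pi(\mathcal{O})$, then every simple object whose dimension is divisible by a vertex of $C$ must have trivial orbit, i.e.\ comes from some $a \in A^G$: otherwise an orbit prime would be joined to that vertex and would lie in $C$.

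It remains to analyze the components that come purely from fixed points $a \in A^G$, where $\FPdim X_{a, U_a} = \dim U_a$ is an $\alpha_a$-projective character degree of $G$. By Lemma \ref{group-homo} the classes $[\alpha_a]$ generate a single subgroup of $H^2(G, k^*)$, so I would pass to a finite central extension $1 \to Z \to \widehat{G} \to G \to 1$ (a representation group of $G$) realizing all of them: these degrees then become ordinary irreducible character degrees of $\widehat{G}$, and by Theorem \ref{ppties-graphg-1}(i) the graph $\Delta(\widehat{G})$ has at most three connected components. Note that the case where $G$ acts trivially on $A$ (all $a \in A^G$) is already settled this way, and that $\Delta(G)$ itself sits inside this fixed-point subgraph via $a = \un$.

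The main obstacle is the final bookkeeping: combining the at most two components carrying orbit primes with the at most three coming from the fixed-point degrees, and showing that their union in $\Delta(\C^G)$ has at most three components rather than five. This forces a genuine trade-off between the two---more nontrivial orbits must absorb more degree primes into the orbit components---and is exactly where the interaction between the $G$-action on $A$ and the projective degrees has to be exploited, rather than treating the two data independently. I expect the decisive input to be a lemma showing that a fixed-point degree prime lying outside the orbit components is already a vertex of $\Delta(G)$, so that the global count is governed by $\Delta(G)$ together with the IP-graph, with the possible third (non-solvable) component forced to be of the type appearing in Theorem \ref{ppties-graphg-1}(i).
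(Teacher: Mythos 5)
Your setup reproduces the paper's first half correctly: formula \eqref{dim-ep} together with \cite[Corollary B]{ipgraph} (Example \ref{orbits}) does confine all primes coming from simple objects lying over non-fixed points to at most two connected components. But the proof is genuinely incomplete at exactly the point you flag yourself: your two ingredients only give a bound of $2+3=5$ components, and the ``decisive input'' you hope for --- that a fixed-point degree prime outside the orbit components must already be a vertex of $\Delta(G)$ --- is not available; it is in fact false in the critical case. When $G$ is abelian the graph $\Delta(G)$ is empty, yet fixed-point degree primes do occur (precisely when $[\alpha_a]\neq 1$; for instance $\Rep D^\omega(A)\cong\C(A,\omega)^A$ with $A$ abelian acting adjointly, hence trivially, and $\omega\notin H^3(A,k^*)_{ab}$ has simple objects of dimension $>1$ while there are no orbit primes at all). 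So no lemma of the kind you anticipate can close the gap.

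The paper closes it with two moves you do not make. First, it invokes Theorem \ref{thm-leq3} (proved earlier for an arbitrary integral fusion category acted on by a \emph{nonabelian} group, via the Gallagher analogue and Michler's theorem) together with Remark \ref{graphs-sets}, so that one may assume $G$ is abelian; this removes any need for representation groups or for Theorem \ref{ppties-graphg-1}(i). Second, for abelian $G$ it shows that \emph{all} fixed-point dimensions lie in a single component, using Lemma \ref{group-homo} multiplicatively: writing $n_a$ for the order of $[\alpha_a]$, one has $n_a\mid\dim U_a$; if $n_a$ and $n_b$ share a prime then $\dim U_a$ and $\dim U_b$ are already adjacent in $\Gamma$, and if $n_a$, $n_b$ are coprime then $[\alpha_{ab}]=[\alpha_a][\alpha_b]$ has order $n_an_b$, so $\dim U_{ab}$ is divisible by primes of both $n_a$ and $n_b$ and the object $X_{ab,U_{ab}}$ supplies the connecting path --- crucially, this connecting degree is \emph{realized} in $\C(A,\omega)^G$ because $ab\in A^G$. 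This gives $2+1=3$. Your representation-group idea suffers from a realization defect even for the part it is meant to handle: edges of $\Delta(\widehat{G})$ arise from degrees of $\widehat{G}$ lying over arbitrary classes in $H^2(G,k^*)$, whereas only representations over classes in the image of $A^G\to H^2(G,k^*)$ correspond to simple objects of $\C(A,\omega)^G$; connectivity of $\Delta(\widehat{G})$ therefore does not transfer to the realized subgraph, which may have strictly fewer edges. (One can repair this by replacing the Schur cover with the central extension attached to the image subgroup, but the $2+3$ bookkeeping problem would still remain.)
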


\begin{proof} In view of Theorem \ref{thm-leq3} and Remark \ref{graphs-sets}, we may assume that the group $G$ is abelian.

As a consequence of \cite[Corollary B]{ipgraph} (see Example \ref{orbits}), it follows from formula \eqref{dim-ep} that the Frobenius-Perron dimensions of simple objects $X_{a, U_a}$ lying over non-fixed points $a \in A\backslash A^G$ fall into at most two connected components of $\Gamma(\C(A, \omega)^G)$.

Since the group $G$ is abelian, the remaining nontrivial Frobenius-Perron dimensions of $\C(A, \omega)^G$ correspond to simple objects $X_{a, U_a}$ lying over elements $a \in A^G$ such that $[\alpha_a] \neq 1$, and for such simple objects we have $\FPdim X_{a, U_a} = \dim U_a$. Let $n_a$ denote the order of the class $[\alpha_a]$ in $H^2(G, k^*)$. Then $n_a$ divides $\dim U_a$. (Note in addition that, since $G$ is abelian, the dimensions of the irreducible projective representations $U_a$ are all equal.)

Suppose that $a, b \in A^G$ are such that $[\alpha_a], [\alpha_b] \neq 1$ and $n_a$ is relatively prime to $n_b$. In particular, $ab \in A^G$ and from Lemma
\ref{group-homo} we get that the order of the class $[\alpha_{ab}] = [\alpha_a] [\alpha_b]$ equals the product of the orders $n_an_b$. Hence the Frobenius-Perron dimensions of $X_{a, U_a}$ and $X_{b, U_b}$ are both connected to the Frobenius-Perron dimension of $X_{ab, U_{ab}}$, for any irreducible $\alpha_{ab}$-projective representation $U_{ab}$ of $G$. Therefore the nontrivial dimensions of simple objects of $\C(A, \omega)^G$ lying over fixed points of $A$ belong to the same connected component of $\Gamma(\C(A, \omega)^G)$. Thus this graph has at most three connected components, as claimed.
\end{proof}

\section{The graph of the representation category of a twisted quantum double}\label{s-tqd}
Let $G$ be a finite group and let $\omega$ be a 3-cocycle on $G$. Consider  the \emph{twisted quantum double} $D^{\omega}(G)$ of $G$ \cite{dpr}. The Frobenius-Perron graphs of the fusion category $\C = \Rep D^\omega(G)$ will be indicated by $\Delta(D^\omega(G))$, $\Gamma(D^\omega(G))$ in what follows. Similarly, the vertex set of the graph $\Delta(D^\omega(G))$ will be denoted by $\vartheta(D^\omega(G))$.

\medbreak The irreducible representations of $D^{\omega}(G)$ are parameterized by pairs $(a, U)$, where $a$ is a conjugacy class representative of $G$ and $U$ is a projective irreducible representation of $C_G(a)$, the centralizer of $a$ in $G$, with 2-cocycle $\alpha_{a}$ with coefficients in $k^{*}$ given by:
\begin{equation}\label{theta}\alpha_{a}(x,y) =
\frac{\omega(a,x,y)\omega(x,y,a)}{\omega(x,a,y)}, \quad x,y \in C_G(a).\end{equation}

The dimension of the representation $W_{(a, U)}$ corresponding to the pair $(a, U)$ is $\dim W_{(a, U)} = [G : C_G(a)] \dim U = |^Ga| \dim U$, where $^Ga \subseteq G$ denotes the conjugacy class of $a$.

\begin{remark}\label{tdd-equiv} Let $\C(G, \omega)$ denote the fusion category of finite-dimensional $G$-graded vector spaces with associativity given by $\omega$. The adjoint action of $G$ on itself gives rise to an action by tensor autoequivalences $G \to \underline{\Aut}_\otimes(\C(G, \omega))$ in such a way that $\Rep D^\omega(G) \cong \C(G, \omega)^G$.  \end{remark}

\medbreak  For each $a\in G$, consider the associated 2-cocycle $\alpha_a$ of $C_G(a)$ given by \eqref{theta}. The map $D_a:\omega\mapsto\alpha_a$ induces a group homomorphism $H^{3}(G,k^{*})\to H^{2}(C_C(a),k^{*})$. Following \cite{mason-ng} we will denote by $H^{3}(G,k^{*})_{ab}$ the subgroup $$H^{3}(G,k^{*})_{ab} = \bigcap_{a\in G}\ker D_a \subseteq H^{3}(G,k^{*}).$$

Let $p$ be a prime number. We shall denote by $\omega_p$ the $p$-part of the cocycle $\omega$. Thus, $\omega = \prod_{p \vert |\omega|} \omega_p$.
Let $S_\omega$ be the set of prime numbers defined as follows:
$$S_\omega=\{p \textrm{ prime: } \omega_p \notin H^{3}(G, k^*)_{ab}, \textrm{ and } G \textrm{ has a central Sylow $p$-subgroup}\}.$$
Note that  the order of $\omega$ is divisible by all primes $p \in S_\omega$.

\begin{lemma}\label{cor-sp} Let $p$ be a prime number and assume that $G$ has a central Sylow subgroup $S$. Then $p \in S_\omega$ if and only if $\omega\vert_S \notin H^3(S, k^*)_{ab}$.
\end{lemma}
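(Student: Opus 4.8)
The plan is to reduce everything to the direct-product structure forced by the hypothesis and then to compare $\omega_p$ with $\omega\vert_S$ through inflation along the projection onto $S$.

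First I would record the group-theoretic consequence of the hypothesis. Since $S$ is a central, hence normal, Sylow $p$-subgroup and $[G:S]$ is prime to $p$, Schur--Zassenhaus provides a complement, and because $S$ is central the extension splits as a direct product $G = S \times K$ with $K$ a $p'$-group and $S$ abelian. Writing $\pi\colon G \to S$ for the projection and $\iota\colon S \hookrightarrow G$ for the inclusion, I have $\operatorname{res}\circ\Infl = \id$ on $H^3(S,k^*)$, where $\operatorname{res} = \iota^*$ and $\Infl = \pi^*$. The standard identity $\operatorname{cor}\circ\operatorname{res} = [G:S]\cdot\id$ together with $\gcd([G:S],p)=1$ shows that $\operatorname{res}$ is injective on the $p$-primary component $H^3(G,k^*)_{(p)}$; since $H^3(S,k^*)$ is a $p$-group, $\Infl$ carries it into $H^3(G,k^*)_{(p)}$, and the two maps are mutually inverse isomorphisms $H^3(G,k^*)_{(p)} \cong H^3(S,k^*)$. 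Because $\operatorname{res}$ sends the prime-to-$p$ part of $\omega$ into the $p$-group $H^3(S,k^*)$ and hence to zero, this yields $\omega\vert_S = \operatorname{res}(\omega_p)$ and, crucially, $\omega_p = \Infl(\omega\vert_S)$.

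Since the hypothesis already guarantees the central Sylow condition, $p \in S_\omega$ is equivalent to $\omega_p \notin H^3(G,k^*)_{ab}$, so it suffices to prove that $\omega_p \in H^3(G,k^*)_{ab}$ if and only if $\omega\vert_S \in H^3(S,k^*)_{ab}$. For the forward implication I would use that every $b \in S$ is central, so $C_G(b) = G$, and that the defining formula \eqref{theta} for $\alpha_b$ involves only triples built from $b$ and elements of $C_G(b)$; restricting the resulting $2$-cocycle to $S$ produces exactly the cocycle computing $D_b^S(\omega\vert_S)$. Thus $\operatorname{res}\circ D_b^G = D_b^S\circ\operatorname{res}$ for $b \in S$, and if $D_b^G(\omega_p)=0$ for all $b\in G$, then in particular $D_b^S(\omega\vert_S) = \operatorname{res}(D_b^G(\omega_p)) = 0$ for every $b \in S$, so $\omega\vert_S \in H^3(S,k^*)_{ab}$.

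The main point is the reverse implication, and here the identity $\omega_p = \Infl(\omega\vert_S)$ does the work. Assuming $\omega\vert_S \in H^3(S,k^*)_{ab}$, I would represent $\omega_p$ by the inflated cocycle $\bar\omega\circ(\pi\times\pi\times\pi)$, where $\bar\omega$ is a cocycle on $S$ representing $\omega\vert_S$. For an arbitrary $a = st \in G$ (with $s = \pi(a)\in S$, $t\in K$) one has $C_G(a) = S \times C_K(t)$, and substituting this representative into \eqref{theta} shows that $\alpha_a(x,y)$ depends on $x,y$ only through their $S$-components and equals $\alpha^S_{\pi(a)}(\pi(x),\pi(y))$, i.e. $\alpha_a$ is the inflation of $\alpha^S_{\pi(a)}$ along the projection $C_G(a)\to S$. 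By hypothesis $\alpha^S_{\pi(a)}$ is a coboundary on $S$, so its inflation $\alpha_a$ is a coboundary on $C_G(a)$, giving $D_a^G(\omega_p)=0$; as $a$ was arbitrary, $\omega_p \in H^3(G,k^*)_{ab}$. The one step requiring care is the cohomological bookkeeping of the first paragraph—pinning $\omega_p$ down precisely as $\Infl(\omega\vert_S)$—since everything afterwards reduces the computation of $D_a^G$ for all $a\in G$ to the already-understood maps $D_b^S$ on the abelian group $S$.
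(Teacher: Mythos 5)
Your proof is correct, and its overall skeleton coincides with the paper's: both split $G = S \times K$ using centrality of the Sylow subgroup, both reduce the lemma to the identity $\omega_p = \Infl(\omega\vert_S)$, and both implications are then handled by exactly the same cocycle manipulations (restricting a coboundary $df_a$ to $S$ for the forward direction, and inflating a coboundary $f_a \circ \pi$ for the converse — your $f\circ\pi$ is literally the paper's $\tilde f_{(a,h)}(b,g) = f_a(b)$). The one place where you genuinely diverge is how the identity $\omega_p = \Infl(\omega\vert_S)$ is established. The paper invokes the K\"unneth-type decomposition from Brown's book: there exist $\omega_1 \in H^3(S,k^*)$, $\omega_2 \in H^3(\tilde G,k^*)$ with $\omega = \Infl\omega_1 \cdot \Infl\omega_2$, and then compares orders of the two factors to identify $\Infl\omega_1$ with $\omega_p$. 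You instead run a restriction--corestriction argument: $\operatorname{res}\circ\Infl = \id$ on $H^3(S,k^*)$, injectivity of $\operatorname{res}$ on the $p$-primary component via $\operatorname{cor}\circ\operatorname{res} = [G:S]\cdot\id$, and the fact that $H^3(S,k^*)$ is a $p$-group, concluding that $\Infl$ and $\operatorname{res}$ are mutually inverse between $H^3(G,k^*)_{(p)}$ and $H^3(S,k^*)$. Your route is more self-contained and elementary — it needs only standard transfer facts rather than the K\"unneth theorem — and it yields slightly more (the explicit isomorphism of the $p$-primary part with $H^3(S,k^*)$). The paper's choice has the advantage of consistency with the rest of Section \ref{s-tqd}: the same decomposition $\omega = \Infl\omega_1\cdot\Infl\omega_2$ feeds directly into the Mason--Ng isomorphism $D^{\omega}(G) \cong D^{\omega_{1}}(S_{p})\otimes D^{\omega_{2}}(\widetilde{G})$ used in Propositions \ref{contain1} and \ref{connected1}, so no additional bookkeeping is needed there.
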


\begin{proof} The assumption implies that $S$ is an abelian direct factor of $G$, that is, $G =S \times \tilde G$, where the order of $\tilde G$ is not divisible by $p$. Then there are 3-cocycles $\omega_{1} \in H^3(S, k^*)$ and $\omega_{2}  \in H^3(\tilde G, k^*)$ such that $\Infl\omega_{1} \Infl\omega_{2}=\omega$ \cite{brown}.

Observe that the order of the cohomology class of $\Infl\omega_{2}$ divides the order of $\tilde G$ and thus it is relatively prime to $p$. Therefore the class of  $\Infl\omega_{1}$ coincides with the $p$-part $\omega_p$ of the class of $\omega$. In addition $\omega|_S = \omega_p|_S = \omega_{1}$.

Suppose $\omega_p \in H^3(G, k^*)_{ab}$. Then,
for all $a \in S \subseteq Z(G)$, there is a $1$-cochain $f_a:G \to k^*$ such that $\alpha_a = df_a$, where $\alpha_a \in H^2(G, k^*)$ is associated to $\omega_p$ by \eqref{theta}. Letting $\alpha^1_a \in H^2(S, k^*)$ be the 2-cocycle associated to $\omega_1 = \omega_p\vert_S$, we have that $\alpha^1_a = df_a\vert_S$ and hence $\omega_1 \in H^3(S, k^*)_{ab}$.

Conversely, suppose that $\omega_1 = \omega\vert_S \in H^3(S, k^*)_{ab}$. Then for all $a \in S$, there exists $f_a : S \to k^*$ such that $\alpha'_a = df_a$, where $\alpha'_a \in H^2(S, k^*)$ is associated to $\omega\vert_S$ by \eqref{theta}. Let $\tilde f_{(a, h)}:G = S \times \tilde G \to k^*$ be the $1$-cochain defined by $\tilde f_{(a, h)}(b, g) = f_a(b)$, for all $a, b\in S$, $h, g \in \tilde G$. Letting $\alpha_{(a, h)}$ be the $2$-cocycle on $G = C_G(a)$ associated to $\omega_p = \Infl(\omega_1)$ by \eqref{theta}, we get that  $\alpha_{(a, h)} =  d\tilde f$.
Hence  $\omega_p = \Infl \omega_1 \in H^3(G, k^*)_{ab}$. This finishes the proof of the lemma.
\end{proof}

\begin{proposition}\label{contain1} $\Delta(G)$ and $\Delta^{\prime}(G)$ are subgraphs of $\Delta(D^{\omega}(G))$. Moreover,
$$\pi(\cs G) \subseteq \vartheta(D^{\omega}(G)) \subseteq \pi(\cs G) \cup S_\omega.$$\end{proposition}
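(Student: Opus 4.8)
The plan is to transport everything to the equivariantization picture of Remark~\ref{tdd-equiv}, where $\Rep D^\omega(G) \cong \C(G,\omega)^G$, and then to read off the graph-theoretic statements from the dimension formula $\dim W_{(a,U)} = |^Ga|\,\dim U$, with $a$ ranging over conjugacy class representatives and $U$ over the irreducible $\alpha_a$-projective representations of $C_G(a)$. For the first subgraph assertion I would take $a = e$: then $C_G(e) = G$, $|^Ge| = 1$, and since $\omega$ is normalized the cocycle $\alpha_e$ is trivial, so the pairs $(e,U)$ with $U \in \Irr(G)$ yield representations of dimension $\dim U$. Hence $\cd(G) \subseteq \cd(\Rep D^\omega(G))$ and $\Delta(G)$ is a subgraph of $\Delta(D^\omega(G))$. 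For the second, I would observe that for every conjugacy class representative $a$ and every admissible $U$ the class size $|^Ga|$ divides $\dim W_{(a,U)}$; thus each element of $\cs(G)$ divides an element of $\cd(\Rep D^\omega(G))$, which simultaneously yields that $\Delta'(G)$ is a subgraph of $\Delta(D^\omega(G))$ and the inclusion $\pi(\cs G) \subseteq \vartheta(D^\omega(G))$.

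It remains to prove $\vartheta(D^\omega(G)) \subseteq \pi(\cs G) \cup S_\omega$, and this is where the real work lies. Since a projective representation degree divides the group order, $\dim U$ divides $|C_G(a)|$ and therefore $\dim W_{(a,U)} = |^Ga|\,\dim U$ divides $|G|$; in particular every $p \in \vartheta(D^\omega(G))$ divides $|G|$. Fix such a $p$ and a simple object $W_{(a,U)}$ with $p \mid \dim W_{(a,U)}$. If $p \mid |^Ga|$ then $p \in \pi(\cs G)$ and we are done, so I may assume $p \nmid |^Ga|$, hence $p \mid \dim U$. If in addition $p \notin \pi(\cs G)$, then $p$ divides no conjugacy class size, so $G$ has a central Sylow $p$-subgroup $S$; by Lemma~\ref{cor-sp} it then suffices to prove $\omega|_S \notin H^3(S,k^*)_{ab}$, which places $p$ in $S_\omega$.

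The crux is the contrapositive: I will show that if $\omega|_S \in H^3(S,k^*)_{ab}$ then $p \nmid \dim U$, contradicting $p \mid \dim U$. Writing $G = S \times \tilde G$ with $|\tilde G|$ prime to $p$, and $a = (s_0,t)$, one has $C_G(a) = S \times K$ with $K = C_{\tilde G}(t)$ of order prime to $p$. A computation with \eqref{theta} and the factorization $\omega = \Infl\omega_1 \cdot \Infl\omega_2$ (as in the proof of Lemma~\ref{cor-sp}) shows that the class of $\alpha_a|_S$ in $H^2(S,k^*)$ equals that of the $2$-cocycle attached to $\omega|_S$ at $s_0$ via \eqref{theta}, which is a coboundary by hypothesis; so $\alpha_a|_S$ is cohomologically trivial. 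I would then exploit that $S$ is central in $C_G(a)$: the commutator pairing $\langle s, h\rangle = \alpha_a(s,h)\,\alpha_a(h,s)^{-1}$, defined for $s \in S$ and $h \in C_G(a)$ (commuting elements), is bimultiplicative, so $k \mapsto \langle\,\cdot\,,k\rangle$ is a homomorphism $K \to \widehat{S}$ whose image has order dividing $\gcd(|K|,|S|) = 1$, while triviality of $\alpha_a|_S$ makes the pairing trivial on $S \times S$ as well. Consequently every $\bar s$, $s \in S$, is central in the twisted group algebra $k_{\alpha_a}C_G(a)$; by Schur's lemma these act as scalars on the simple module $U$, so $U$ descends to an irreducible projective representation of $C_G(a)/S \cong K$. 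Therefore $\dim U$ divides $|K|$ and is prime to $p$, the desired contradiction.

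The two subgraph statements and the inclusion $\pi(\cs G) \subseteq \vartheta(D^\omega(G))$ are essentially formal; the main obstacle is the final step, namely bounding the $p$-part of $\dim U$ by the restriction $\omega|_S$. The decisive inputs are that projective representation degrees divide the group order and that the coprimality $\gcd(|K|,|S|) = 1$ forces the $S$--$K$ commutator pairing to vanish, after which centrality of $S$ in the twisted group algebra reduces $U$ to a representation of $K$. I would be careful to verify the bimultiplicativity of the commutator pairing and the identification of the class of $\alpha_a|_S$ with that coming from $\omega|_S$, since that is where the cocycle bookkeeping is concentrated.
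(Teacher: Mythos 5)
Your proposal is correct, and while the easy parts (the two subgraph statements and $\pi(\cs G) \subseteq \vartheta(D^{\omega}(G))$) run essentially as in the paper, your proof of the key inclusion $\vartheta(D^{\omega}(G)) \subseteq \pi(\cs G) \cup S_\omega$ takes a genuinely different route. The paper, after producing the central Sylow $p$-subgroup $S_p$ via \cite[Corollary 4]{camina} exactly as you do, invokes two structural results of Mason--Ng: the braided tensor factorization $D^{\omega}(G) \cong D^{\omega_{1}}(S_{p})\otimes D^{\omega_{2}}(\widetilde{G})$ of \cite[Proposition 4.2]{mason-ng}, so that irreducibles of $D^\omega(G)$ are external tensor products, and then the commutativity criterion \cite[Corollary 3.6]{mason-ng} ($D^{\omega_1}(S_p)$ is commutative iff $\omega_1 \in H^3(S_p,k^*)_{ab}$) to conclude $\omega_1 \notin H^3(S_p,k^*)_{ab}$, finishing with Lemma \ref{cor-sp} just as you do. You instead stay inside the parametrization $(a,U)$ and argue at the level of the twisted group algebra $k_{\alpha_a}C_G(a)$ with $C_G(a) = S \times K$: the coboundary-invariant commutator pairing vanishes on $S \times K$ by coprimality and on $S \times S$ because $[\alpha_a|_S] = D_{s_0}([\omega|_S])$ is trivial (the identification via the K\"unneth factorization is the same cocycle bookkeeping as in the paper's proof of Lemma \ref{cor-sp}, and it does go through at the level of classes since $D_a$ and restriction are both maps on cohomology); then Schur's lemma descends $U$ to an irreducible projective representation of $K$, forcing $p \nmid \dim U$. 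What the paper's approach buys is brevity and a byproduct it reuses later: the tensor decomposition of irreducibles is exactly what drives Proposition \ref{connected1}. What your approach buys is self-containedness: you avoid both Mason--Ng citations, in effect reproving the needed direction of their Corollary 3.6 in a relative form, at the cost of having to verify the standard facts you flag (bimultiplicativity and class-invariance of the pairing on commuting pairs, and that irreducible projective degrees divide the group order) -- all of which do check out.
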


\begin{proof}  Let $p$ be a vertex of the graph $\Delta'(G)$. Then there exists $a\in G$ such that $p$ divides $|^Ga|$.
Now, let $W$ be an irreducible representation of $D^{\omega}(G)$ corresponding to a pair  $(a, U)$, where $U$ is any projective irreducible representation of $C_G(a)$ with 2-cocycle $\alpha_{a}$. Then we have  $\dim (a, U)=|^Ga|\dim U$, and thus $p$ divides $\dim W$. Hence $p\in\Delta(D^{\omega}(G))$.
This shows that $\pi(\cs G) \subseteq \vartheta(D^{\omega}(G))$. Clearly, $\Delta^{\prime}(G) \subseteq \Delta(D^{\omega}(G))$ is a subgraph.

In view of Remark \ref{tdd-equiv},  $\Rep D^\omega(G) \cong \C(G, \omega)^G$ is an equivariantization. In particular, $\Rep G$ is equivalent to a fusion subcategory of $\Rep D^\omega(G)$ and therefore $\Delta(G) = \Delta(\Rep G)$ is also a subgraph of $\Delta(D^{\omega}(G))$.

\medbreak Assume that $p\in\vartheta(D^{\omega}(G))$  is such that $p\notin\pi(\cs G)$. Then for every $g\in G$ we have that $p$ does not divide $|^Gg|$.
It follows from \cite[Corollary 4]{camina} that the Sylow $p$-subgroup of \emph{G} is an abelian direct factor of \emph{G}. Then $G$ has a (unique) central Sylow $p$-subgroup $S_p$.

We may thus write $G\cong S_{p} \times \widetilde{G}$ with $(|S_{p}|, |\widetilde{G}|)$ = 1 and $S_{p}$ an abelian $p$-group. In view of \cite[Proposition 4.2]{mason-ng} we have the isomorphism of braided fusion categories
\begin{equation}\label{iso-domega}D^{\omega}(G) \cong D^{\omega_{1}}(S_{p})\otimes D^{\omega_{2}}(\widetilde{G}),\end{equation} where the 3-cocycles $\omega_{1}$ and $\omega_{2}$ satisfy $\Infl\omega_{1} \Infl\omega_{2}=\omega$.
Therefore the class of  $\Infl\omega_{1}$ coincides with the $p$-part $\omega_p$ of the class of $\omega$ and $\omega_p\vert_S =\omega_1$.

The isomorphism \eqref{iso-domega} implies that the irreducible representations of $\emph{D}^{\omega}(G)$ are given by tensor products of irreducible representations of $D^{\omega_{1}}(S_{p})$ and $D^{\omega_{2}}(\widetilde{G})$.

The number $p$ does not divide the order of $\widetilde{G}$. Then $p$ does not divide the dimension of any irreducible representation of $D^{\omega_{2}}(\widetilde{G})$. Hence $p$ divides $\dim Z$, where $Z$ is an irreducible representation of $D^{\omega_{1}}(S_{p})$. This implies that $D^{\omega_{1}}(S_{p})$ is not a commutative algebra. Then using \cite[Corollary 3.6]{mason-ng} we have $\omega_{1}\notin H^{3}(S_{p},k^{*})_{ab}$. Hence $\omega_p \notin H^3(G, k^*)_{ab}$, by Lemma \ref{cor-sp}.
 This proves that $p\in S_\omega$ and the result follows.
\end{proof}

\begin{proposition}\label{connected1} Let $p \in \vartheta(D^\omega(G)) \backslash \, \pi(\cs G)$. Then $p$ is connected with every vertex $q \in \Delta'(G)$.\end{proposition}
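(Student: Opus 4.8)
The plan is to exhibit, for each vertex $q \in \Delta'(G)$, a single irreducible representation of $D^\omega(G)$ whose dimension is divisible by both $p$ and $q$; this establishes $d(p,q)=1$ in $\Delta(D^\omega(G))$, and in particular that $p$ lies in the same connected component as $q$. The argument recombines the tools already assembled in the proof of Proposition \ref{contain1}.

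First I would invoke Proposition \ref{contain1}: since $p \in \vartheta(D^\omega(G))\setminus \pi(\cs G)$, we have $p \in S_\omega$, so that $G$ has a central Sylow $p$-subgroup $S_p$ and $\omega_p \notin H^3(G,k^*)_{ab}$. Because $p \notin \pi(\cs G)$, the subgroup $S_p$ is an abelian direct factor, giving a decomposition $G \cong S_p \times \widetilde G$ with $(|S_p|,|\widetilde G|)=1$, together with the isomorphism of braided fusion categories \eqref{iso-domega}, namely $D^\omega(G) \cong D^{\omega_1}(S_p)\otimes D^{\omega_2}(\widetilde G)$, where $\omega_1 = \omega|_{S_p}$ and $\Infl\omega_1\,\Infl\omega_2 = \omega$.

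Next I would produce the $p$-divisible factor. By Lemma \ref{cor-sp}, the condition $p \in S_\omega$ gives $\omega_1 \notin H^3(S_p,k^*)_{ab}$, so $D^{\omega_1}(S_p)$ is not a commutative algebra by \cite[Corollary 3.6]{mason-ng}; hence it admits an irreducible representation $Z$ with $\dim Z > 1$. Since $\FPdim D^{\omega_1}(S_p) = |S_p|^2$ is a power of $p$, every irreducible dimension there is a power of $p$, so $p \mid \dim Z$. For the $q$-divisible factor I would use that $q \in \Delta'(G)$ yields some $b \in G$ with $q \mid |{}^Gb|$; writing $b = (s,\tilde b)$ and using that $S_p$ is central, so that the conjugacy class of $b$ is $\{s\}\times {}^{\widetilde G}\tilde b$ and hence $|{}^Gb| = |{}^{\widetilde G}\tilde b|$, the representation $W_{(\tilde b, U')}$ of $D^{\omega_2}(\widetilde G)$ (for any irreducible projective representation $U'$ of $C_{\widetilde G}(\tilde b)$ with the appropriate $2$-cocycle) has dimension $|{}^{\widetilde G}\tilde b|\,\dim U'$ divisible by $q$.

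Finally, under \eqref{iso-domega} the tensor product of $Z$ with $W_{(\tilde b, U')}$ is an irreducible representation of $D^\omega(G)$, and its dimension is divisible by $pq$; here $p \neq q$, since $q \mid |\widetilde G|$ while $p \nmid |\widetilde G|$, so $pq$ genuinely divides this dimension. Thus $p$ and $q$ are joined by an edge in $\Delta(D^\omega(G))$. The only step requiring care is the passage from the non-commutativity of $D^{\omega_1}(S_p)$ to an actual $p$-divisible irreducible dimension, which is exactly where the fact that the total dimension $|S_p|^2$ is a $p$-power is used; everything else is a direct application of the structure established for Proposition \ref{contain1}.
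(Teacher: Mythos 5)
Your proof is correct and follows essentially the same route as the paper's: decompose $D^\omega(G) \cong D^{\omega_1}(S_p) \otimes D^{\omega_2}(\widetilde{G})$ using Proposition \ref{contain1}, exhibit an irreducible representation of $p$-divisible dimension in the first factor and one of $q$-divisible dimension in the second, and tensor them to get an edge between $p$ and $q$. The only (harmless) divergence is in producing $Z$: the paper extracts it directly from the hypothesis $p \in \vartheta(D^\omega(G))$, observing that $p$ cannot divide dimensions coming from the $\widetilde{G}$-factor, whereas you re-derive it from $p \in S_\omega$ via Lemma \ref{cor-sp} and the converse direction of \cite[Corollary 3.6]{mason-ng} (valid, since for an abelian group that criterion is an equivalence) combined with the observation that all irreducible dimensions of $D^{\omega_1}(S_p)$ are $p$-powers.
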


\begin{proof}
It follows from Proposition \ref{contain1} that  $p \in S_\omega$ and therefore the Sylow $p$-subgroup $S_{p}$ of $G$ is an abelian direct factor of $G$. Hence there is an isomorphism
$D^{\omega}(G) \cong D^{\omega_{1}}(S_{p})\otimes D^{\omega_{2}}(\widetilde{G})$, where $G = S_p \times \widetilde G$ and $[\Infl\omega_{1}] [\Infl\omega_{2}]=[\omega]$. Therefore the irreducible representations of $\emph{D}^{\omega}(G)$ are given by tensor products of irreducible representations of $D^{\omega_{1}}(S_{p})$ and $D^{\omega_{2}}(\widetilde{G})$.

Since $p \in \vartheta(D^\omega(G))$, and $p$ does not divide the dimension of any irreducible representation of $D^{\omega_{2}}(\widetilde{G})$, then $p$ divides $\dim Z$, for some  irreducible representation $Z$ of $D^{\omega_{1}}(S_{p})$.

Let $q \in \pi(\cs G)$. Observe that $\cs(G) = \cs (\widetilde G)$, because $G = S_p \times \widetilde G$ and $S_p$ is abelian.   Hence, there exists $a \in \widetilde G$ such that $q$ divides $|^{\widetilde G}a|$. Then $q$ divides the dimension of some irreducible representation $\widetilde W_{(a, U)}$ of $D^{\omega_{2}}(\widetilde{G})$. This implies that $pq$ divides the dimension of $Z \otimes \widetilde W_{(a, U)}$ and, since this is an irreducible representation of $D^\omega(G)$, then  $p$ and $q$ are connected by an edge in $\Delta(D^\omega(G))$.
\end{proof}

Using the previous propositions, we obtain the following theorem about the graph $\Delta(D^\omega(G))$.

\begin{theorem}\label{tqd}
Let $G$ be a finite group and let $\omega$  be a 3-cocycle on $G$. Then the graph $\Delta(D^\omega(G))$ has at most two connected components and its diameter is at most $3$. Furthermore, we have:
\newcounter{lista1}
\begin{enumerate}
\item[(i)] Suppose $G$ is nilpotent. Then $\Delta(D^\omega(G))$ is the complete graph on the vertex set $S_\omega$.
  \item[(ii)] If the set $S_\omega$  is non-empty, then the graph $\Delta(D^\omega(G))$ is connected and its diameter is at most two.
  \item[(iii)] Suppose that $\Delta(D^\omega(G))$ is not connected. Then $S_\omega = \emptyset$ and $G$ is a quasi-Frobenius group with abelian complement and kernel. Furthermore, we have $\Delta(D^\omega(G)) = \Delta'(G)$ has two connected components and each of them is a complete graph.
\item[(iv)] Suppose that $G$ is not solvable. Then $\Delta(D^\omega(G))$ is connected and its diameter is at most $2$. \end{enumerate}
 \end{theorem}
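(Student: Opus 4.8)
The plan is to reduce the whole statement to two ingredients: a precise description of the vertex set of $\Delta(D^\omega(G))$, and the cross-connectivity supplied by Proposition~\ref{connected1}, after which everything becomes graph combinatorics layered on top of Theorem~\ref{ppties-graphg}. First I would establish the vertex set identity $\vartheta(D^\omega(G)) = \pi(\cs G) \sqcup S_\omega$. The inclusions $\pi(\cs G)\subseteq \vartheta(D^\omega(G)) \subseteq \pi(\cs G)\cup S_\omega$ are exactly Proposition~\ref{contain1}. For the converse inclusion $S_\omega\subseteq \vartheta(D^\omega(G))$ I would run the argument of Proposition~\ref{contain1} backwards: if $p\in S_\omega$ then $G=S_p\times\widetilde G$ with $S_p$ a central abelian Sylow subgroup, $D^\omega(G)\cong D^{\omega_1}(S_p)\otimes D^{\omega_2}(\widetilde G)$, and $\omega_1=\omega\vert_{S_p}\notin H^3(S_p,k^*)_{ab}$ by Lemma~\ref{cor-sp}; then \cite[Corollary 3.6]{mason-ng} shows $D^{\omega_1}(S_p)$ is noncommutative, hence has an irreducible representation of dimension divisible by $p$, so $p\in\vartheta(D^\omega(G))$. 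Disjointness of the union is immediate, since $p\in S_\omega$ forces a central Sylow $p$-subgroup, which by \cite[Corollary 4]{camina} is equivalent to $p\notin\pi(\cs G)$.

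With this in hand I would prove (ii) first. Assume $S_\omega\neq\emptyset$ and fix $p_0\in S_\omega$; by the vertex set identity $p_0\in\vartheta(D^\omega(G))\setminus\pi(\cs G)$, so Proposition~\ref{connected1} makes $p_0$ adjacent to every vertex of $\pi(\cs G)=\vartheta(\Delta'(G))$, and likewise every vertex of $S_\omega$ is adjacent to every vertex of $\pi(\cs G)$. If $\pi(\cs G)\neq\emptyset$, fix $q_0\in\pi(\cs G)$: any two vertices of $\pi(\cs G)$ lie at distance at most $2$ through $p_0$, any two vertices of $S_\omega$ at distance at most $2$ through $q_0$, and each vertex of $S_\omega$ is adjacent to each vertex of $\pi(\cs G)$; hence $\Delta(D^\omega(G))$ is connected of diameter at most $2$. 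The residual case $\pi(\cs G)=\emptyset$ means $G$ is abelian, so that instance of (ii) is subsumed by the nilpotent case (i), treated last.

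Next I would derive (iii) and the global bounds from (ii) together with Theorem~\ref{ppties-graphg}. By the contrapositive of (ii), if $\Delta(D^\omega(G))$ is disconnected then $S_\omega=\emptyset$, so $\vartheta(D^\omega(G))=\pi(\cs G)$ and $\Delta'(G)$ is a spanning subgraph of $\Delta(D^\omega(G))$ (Proposition~\ref{contain1}). Since adding edges only decreases the number of components, $\Delta(D^\omega(G))$ has at most as many components as $\Delta'(G)$, hence at most two; a disconnected $\Delta(D^\omega(G))$ therefore forces $\Delta'(G)$ to have exactly two components, and Theorem~\ref{ppties-graphg}(ii) identifies $G$ as a quasi-Frobenius group with abelian complement and kernel whose graph $\Delta'(G)$ is the disjoint union of two complete graphs on $\pi(n)$ and $\pi(m)$. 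As $\Delta(D^\omega(G))$ is disconnected and contains these two complete graphs on the same vertex set, it can have no edge across the two parts, so $\Delta(D^\omega(G))=\Delta'(G)$, proving (iii). For the global statement, the bound of at most two components follows the same way when $S_\omega=\emptyset$ and from (ii) when $S_\omega\neq\emptyset$; for the diameter, when $\Delta(D^\omega(G))$ is connected I split on whether $\Delta'(G)$ is connected: if so its diameter is at most $3$ by Theorem~\ref{ppties-graphg}(iv) and passing to the larger graph only helps, while if not the two complete parts are joined by at least one cross edge $p_0q_0$, giving $d(p,q)\le 3$ via $p\!-\!p_0\!-\!q_0\!-\!q$. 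Statement (iv) then follows, since a nonsolvable $G$ has $\Delta'(G)$ connected of diameter at most $2$ by Theorem~\ref{ppties-graphg}(iii) and satisfies $\pi(\cs G)\neq\emptyset$, so both the $S_\omega=\emptyset$ and $S_\omega\neq\emptyset$ subcases yield diameter at most $2$.

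Finally, for (i) with $G$ nilpotent I would use the Sylow decomposition $G=\prod_i S_{p_i}$ and the corresponding factorization $D^\omega(G)\cong\bigotimes_i D^{\omega_i}(S_{p_i})$ of \cite[Proposition 4.2]{mason-ng}, under which each irreducible representation is a tensor product of irreducibles of the factors and dimensions multiply. Since every dimension arising from $D^{\omega_i}(S_{p_i})$ is a power of $p_i$, the prime $p_i$ is a vertex precisely when $D^{\omega_i}(S_{p_i})$ is noncommutative; reading this set off through Lemma~\ref{cor-sp} and \cite[Corollary 3.6]{mason-ng} recovers the description of $\vartheta(D^\omega(G))$. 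To see the graph is complete I would, for each vertex prime, choose a factor irreducible of dimension divisible by that prime and form their tensor product, padding the remaining factors with the trivial object; this single irreducible of $D^\omega(G)$ has dimension divisible by every vertex prime simultaneously, so every pair of vertices is joined by an edge. The main obstacle throughout is the vertex set identity of the first step, and in particular the converse inclusion $S_\omega\subseteq\vartheta(D^\omega(G))$, where the noncommutativity criterion of Mason--Ng and Lemma~\ref{cor-sp} do the genuine work; once the vertices are pinned down, Proposition~\ref{connected1} and Theorem~\ref{ppties-graphg} reduce the remainder to elementary distance estimates.
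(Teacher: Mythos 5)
Your proposal is correct and takes essentially the same route as the paper: Proposition \ref{contain1} to pin down the vertex set, Proposition \ref{connected1} to join the vertices of $S_\omega$ to those of $\pi(\cs G)$, Theorem \ref{ppties-graphg} for the group-theoretic input, and the Mason--Ng factorization for nilpotent $G$ (the paper invokes Proposition \ref{nilp}, whose tensor-product argument you simply inline). You are in fact slightly more careful at two points where the paper is terse: you prove the inclusion $S_\omega \subseteq \vartheta(D^\omega(G))$ explicitly, which the paper's proof of part (ii) uses tacitly, and your case analysis for the global diameter-$3$ bound (using that the two components of $\Delta'(G)$ are complete graphs when $\Delta'(G)$ is disconnected) repairs the paper's blanket assertion that the diameter of $\Delta(D^\omega(G))$ is bounded by the diameter of its spanning subgraph $\Delta'(G)$, an assertion that is not literally valid when that subgraph is disconnected.
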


\begin{proof} (i). The assumption implies that $G$ is the direct product of its Sylow subgroups: $G\cong S_{p_1} \times \dots \times S_{p_n}$, where $p_1, \dots, p_n$ is the set of prime divisors of the order of $G$. By \cite[Proposition 4.2]{mason-ng}, we have
$$D^{\omega}(G) \cong D^{\omega_{1}}(S_{p_1})\otimes \dots \otimes D^{\omega_{n}}(S_{p_n}),$$ where the 3-cocycles $\omega_{i} \in H^3(S_{p_i}, k^*)$ satisfy $\Infl\omega_{1} \dots \Infl\omega_{n}=\omega$. Thus in this case $\Rep D^\omega(G)$ is a nilpotent braided category and part (i) follows from Proposition \ref{nilp}. Observe that the vertex set  coincides in this case with the set $S_\omega$.

\medbreak (ii). We may assume that the group $G$ is not abelian.
Let $p \in \vartheta(D^\omega(G))$  be such that $p \in S_\omega$. Using Proposition \ref{connected1}, for any $q \in \vartheta(D^\omega(G))$ such that $q \in \Delta'(G)$ we have that $p$ and $q$ are joined by an edge. This proves that the graph is connected. If $s$ and $r$ are two vertex in $\Delta'(G)$ then either $s$ and $r$ and joined by an edge or there exists a path of length $2$, which consists of the edges $(s, p)$ and $(p, r)$.

Similarly, suppose that $p \neq l \in S_\omega$ is another vertex. Since $G$ is not abelian, then  there exists a vertex $t \in \Delta'(G)$. Hence $t$ is connected to both $p$ and $l$ and therefore there is a path $(p, t)$,  $(l, t)$ from $p$ to $l$, of length $2$. It follows that the graph $\Delta(D^\omega(G))$ has diameter at most $2$. This proves (ii).

\medbreak (iii). Suppose that $\Delta(D^\omega(G))$ is not connected. Then $S_\omega = \emptyset$, by part (ii). It follows from Proposition \ref{contain1} that $\vartheta(D^\omega(G)) = \pi(\cs G)$ and, since the graph $\Delta'(G)$ is a subgraph of $\Delta(D^\omega(G))$, it is not connected neither. Theorem \ref{ppties-graphg} (ii) implies that $G$ is a Frobenius group with abelian kernel and complement. The disconnectedness assumption on the graph $\Delta(D^\omega(G))$ entails that there cannot be more edges in $\Delta(D^\omega(G))$ than there are in $\Delta'(G)$. Hence $\Delta(D^\omega(G)) = \Delta'(G)$ and we get (iii).

\medbreak (iv). By (ii), we may assume that the set $S_\omega$ is empty, and thus $\vartheta(D^\omega(G)) = \pi(\cs G)$, by Proposition \ref{contain1}. Since $G$ is not solvable, we have that $\Delta'(G)$ is connected is its diameter is at most $2$, by Theorem \ref{ppties-graphg} (iii). Hence the same holds for $\Delta(D^\omega(G))$. This shows (iv).

\medbreak The statement on the number of components of $\Delta(D^\omega(G))$ follows from (iii). Let us prove the statement on the diameter. We have that $\vartheta(D^{\omega}(G)) \subseteq \pi(\cs G) \cup S_\omega$. If $S_\omega \neq \emptyset$, then the diameter of $\Delta(D^\omega(G))$ is at most $2$, in view of part (ii). Suppose, on the other hand, that $S_\omega = \emptyset$. Since $\Delta'(G)$ is a subgraph of $\Delta(D^\omega(G))$,  the diameter of $\Delta(D^\omega(G))$ is not bigger than the diameter of $\Delta'(G)$. By Theorem \ref{ppties-graphg}, the diameter of $\Delta'(G)$ is at most $3$. This finishes the proof of the theorem.
\end{proof}

\begin{example} Let $G$ be a Frobenius group with abelian kernel $N$ and abelian complement $H$ of orders $n$ and $m$, respectively. The conjugacy class sizes of $G$ are $n$ and $m$. The Frobenius kernel $N$ is determined by $H$ by the relation
$$N = (G \backslash \cup_{g \in G}{}^g\!H) \cup \{ e\}.$$ See \cite{isaacs}. It follows that if $a \in G$, then either $a$ is conjugate to some element of $H$ or $a \in N$. Hence the conjugacy classes of elements of $G$ are those of the form $^Ga$, where $a \in H$ or $a \in N$.

Since the groups $N$ and $H$ are abelian by assumption, we obtain that $C_G(a) = H$ if $e \neq a \in H$,  and $C_G(a) = N$ if $e \neq a \in N$. In addition, every irreducible representation of $C_G(a)$ is one-dimensional in either of these cases. As for the identity element $e$, the irreducible degrees of the centralizer $C_G(e) = G$ divide the order of $H$, by Ito's theorem.
Hence the graph $\Delta(D(G))$ coincides with the graph $\Delta'(G)$.

\medbreak Take for instance $G$ to be the Frobenius group $G = \mathbb Z_{341} \rtimes \mathbb Z_{10}$ (with the action of $\mathbb Z_{10}$ on $\mathbb Z_{341}$ such that the generator acts by an automorphism of order $10$). Let $\C = \Rep D(G)$. Then $\C$ is a solvable fusion category such that $\Delta(\C)$ is the graph
$$\begin{CD}\bullet \overline{\qquad } \bullet \quad \bullet \overline{\qquad } \bullet.\end{CD}$$
Observe that this graph is not allowed for $\Delta(G)$, if $G$ is a solvable group; see \cite[Theorem 4.3]{lewis}.
\end{example}

\section{Braided fusion categories}\label{braided}

We begin this section by recalling some definitions and properties of braided fusion categories. We refer the reader to \cite{BK}, \cite{DGNOI} for a detailed exposition.

\medbreak Recall that a fusion category $\C$ is called \emph{braided} if it is endowed with a natural isomorphism, called a \emph{braiding},
$$c_{X, Y}: X \otimes Y \to Y \otimes X, \quad X, Y \in \C,$$ subject to the hexagon axioms.

\medbreak A braided fusion category $\C$ is called \emph{symmetric} if $c_{Y, X}c_{X, Y} = \id_{X \otimes Y}$, for all objects $X, Y \in \C$.

If $G$ is a finite group, then the category $\Rep G$ is symmetric when endowed with the standard braiding (given by the flip isomophism). A symmetric fusion category $\C$ is \emph{Tannakian} if $\C \cong \Rep G$ as braided fusion categories, for some finite group $G$.

The category $\svect$ of finite dimensional super vector spaces $V = V_0 \oplus V_1$ is a pointed symmetric fusion category which is not Tannakian. The braiding $c: V \otimes W \to W \otimes V$, between two objects $V, W \in \svect$ is defined by $c(v \otimes w) =  (-1)^{ab} w\otimes v$, for homogeneous elements $v \in V_a$, $w \in W_b$, $0 \leq a, b \leq 1$.

\medbreak Every symmetric fusion category $\C$ is \emph{super-Tannakian}, that is, there exist a finite group
$G$ and a central element $u \in G$ of order 2, such that $\C$ is equivalent  as a braided tensor category to the category $\Rep(G, u)$ of representations of $G$ on finite-dimensional super-vector spaces where $u$ acts as the parity operator \cite{deligne}.

If $\C$ is a symmetric fusion category, there is a canonical $\mathbb Z_2$-grading on $\C$, $\C = \C_1 \oplus \C_{-1}$, induced by the unique positive spherical structure on $\C$.   The category $\C_1 \cong \Rep G/(u)$ is the maximal Tannakian subcategory of $\C$. Furthermore, if $\theta$ denotes the corresponding balanced structure, then $\C_1$ coincides with the full subcategory of objects $X$ such that $\theta_X = \id_X$. See \cite[Corollary 2.50]{DGNOI}.

\medbreak Let $\C$ be a braided fusion category. Two simple objects $X$ and $Y$ of $\C$ are said to \emph{centralize each other} if $c_{Y, X}c_{X, Y} = \id_{X \otimes Y}$.
The  \emph{M\" uger centralizer}  of a fusion subcategory $\D$, indicated by $\D'$, is the full fusion
subcategory generated by all objects $X \in \C$ such that $c_{Y, X}c_{X, Y} =
\id_{X \otimes Y}$, for all objects $Y \in \D$. The centralizer $\C'$ of $\C$ is a symmetric subcategory, called the \emph{M\" uger center} of $\C$. We have that $\C$ is symmetric if and only if $\C' = \C$. If  $\C' \cong \vect$, then $\C$ is called \emph{non-degenerate}, and it is called \emph{slightly degenerate} if $\C' \cong \svect$.

\medbreak More generally, $X$ and $Y$ are said to \emph{projectively centralize each other} if $c_{Y, X}c_{X, Y}$ is a scalar multiple of the identity map of $X \otimes Y$. By \cite[Proposition 3.22]{DGNOI}, this condition is equivalent to requiring that $X$ centralizes $Y \otimes Y^*$ or also that $Y$ centralizes $X \otimes X^*$.

The projective centralizer of a simple object $X$ (respectively, of a full subcategory $\D$) is the full subcategory of $\C$ whose objects projectively centralize $X$ (respectively, every simple object of $\D$). This is a fusion subcategory of $\C$ \cite[Lemma 3.15]{DGNOI}.

\subsection{Tannakian subcategories of braided fusion categories}\label{tann-subc}

Let $\C$ be a brai\-ded fusion category. Suppose $\E$ is any Tannakian subcategory of $\C$, and let $G$ be a finite group such that $\E \cong \Rep G$ as symmetric categories. The algebra $A = k^G$ of functions on
$G$ with the regular action of $G$ is a commutative algebra in $\E$.

The fusion category $\D = \C_G$ of right $A$-modules in $\C$ is called the de-equivarianti\-zation of
$\C$ with respect to $\Rep G$, and we have that $\C \cong \D^G$ is an equivariantization. The category $\D$ is a braided $G$-crossed fusion category \cite{turaev, turaev2}. That is, $\D$ is endowed with a $G$-grading $\D
= \oplus_{g \in G}\D_g$ and an action of $G$ by tensor autoequivalences
$\rho:\underline G \to \underline \Aut_{\otimes} \, \D$, such that $\rho^g(\D_h) \subseteq
\D_{ghg^{-1}}$, for all $g, h \in G$, and a $G$-braiding $\sigma: X \otimes Y \to
\rho^g(Y) \otimes X$, $g \in G$, $X \in \D_g$, $Y \in \D$, subject to appropriate compatibility conditions.
The neutral component $\C_G^0$ of $\D =\C_G$ with respect to the associated $G$-grading is a braided fusion category.

We have that $\C$ is non-degenerate if and only if $\C_G^0$ is non-degenerate and the $G$-grading on $\D$ is faithful. In this case there is an equivalence of braided fusion categories $$\C \boxtimes (\C_G^0)^{\rev} \cong \Z(\C_G).$$
See \cite{mueger-gcrossed}, \cite[Corollary 3.30]{DMNO}.

\medbreak Conversely, if $\D$ is any $G$-crossed braided fusion category, then
the equivariantization $\C = \D^G$ under the action of $G$ is a braided fusion category, and $\C$ contains $\E \cong \Rep G$
as a Tannakian subcategory, under  the canonical embedding $\Rep G \to \D^G \cong \C$.

\medbreak Assume in addition that the Tannakian subcategory $\E$ is contained in $\C'$. Then the braiding of $\C$ induces a braiding in the de-equivariantization $\C_G$ such that the canonical functor $\C \to \C_G$ is a braided tensor functor and the action of $G$ on $\C_G$ is by braided autoequivalences \cite{bruguieres}, \cite{mueger-galois}, \cite[Corollary 5.31]{tensor-exact}.

\begin{remark}\label{ext-sym} Suppose $\C \cong \Rep(G, u)$ is a symmetric fusion category. So  that $\C_1 \cong \Rep G/(u)$ is its maximal Tannakian subcategory.  It follows from the previous discussion that the de-equivariantization $\D = \C_{G/(u)}$ is a braided fusion category endowed with an action of $G/(u)$ by braided autoequivalences such that $\C \cong \D^{G/(u)}$. Observe in addition that, if $\C$ is not Tannakian, then we have $\D \cong \svect$ as braided fusion categories. In fact, we have $\FPdim \D = \FPdim \C / \FPdim \C_1 = 2$ and, since the canonical functor $\C \to \D$ is a braided tensor functor, then $\D$ is symmetric (and not Tannakian). As a consequence, in the case where  $\C$ is not Tannakian, we have an exact sequence of braided fusion categories $$\Rep G/(u) \to \C \to \svect.$$
\end{remark}

\subsection{$S$-matrix and Verlinde formula for modular categories}

Recall that a \emph{premodular} category is a braided fusion category equipped with a spherical structure. Equivalently, $\C$ is a braided fusion category endowed with a \emph{balanced structure}, that is, a natural automorphism $\theta: \id_\C \to \id_\C$ satisfying
\begin{equation}\label{bal}\theta_{X \otimes Y} = (\theta_X \otimes \theta_Y) c_{Y, X}c_{X, Y}, \quad \theta_X^* = \theta_{X^*},\end{equation}
for all objects $X, Y$ of $\C$ \cite{bruguieres} (see also \cite[Subsection 2.8.2]{DGNOI}).

\medbreak Suppose $\C$ is a premodular category. Let $X, Y$ be simple objects of $\C$ and let $S_{X, Y} \in k$ denote the quantum trace of the squared braiding $c_{Y, X}c_{X, Y}:X \otimes Y \to Y \otimes X$.

The $S$-matrix of $\C$ is defined in the form $S = (S_{XY})_{X, Y \in \Irr(\C)}$.
This is a symmetric matrix with entries in a cyclotomic field that satisfies  $$S_{XY^*} = \overline{S_{XY}} = S_{X^*Y},$$ for all $X, Y \in \Irr(\C)$, where $\overline{S_{XY}}$ denotes the complex conjugate. In particular, the absolute value of $S_{XY}$ is determined by  $$|S_{XY}|^2 = S_{XY}S_{XY^*} = S_{XY}S_{X^*Y}, \quad X, Y \in \Irr(\C).$$

\medbreak The premodular  category $\C$ is called \emph{modular} if the $S$-matrix is non-degenerate \cite{turaev-b}. Equivalently, $\C$ is modular if and only if it is non-degenerate \cite[Proposition 3.7]{DGNOI}.

For every $X, Y, Z \in \Irr(\C)$, let $N_{XY}^Z = \dim \Hom_\C(Z, X \otimes Y)$ be the multiplicity of the simple object $Z$ in the tensor product $X \otimes Y$.
Suppose $\C$ is modular. Then the following relation, known as \emph{Verlinde formula}, holds:
\begin{equation}\label{verlinde}N_{XY}^Z = \frac{1}{\dim \C}\sum_{T \in \Irr(\C)} \frac{S_{XT} \, S_{YT} \, S_{Z^*T}}{d_T},\end{equation}
for all $X, Y, Z \in \Irr(\C)$, where $d_T$ denotes the categorical dimension of the object $T$ and $\dim \C = \sum_{T \in \Irr(\C)} d_T^2$ is the categorical dimension of $\C$. See \cite[Theorem 3.1.14]{BK}.

\medbreak Let $T$ be a simple object of $\C$. By \cite[Lemma 6.1]{gel-nik},
$|S_{XT}| = |d_X d_T|$, for all $X \in \Irr(\C)$, if and only if $T$ is an invertible object of $\C$.

\medbreak \emph{For the rest of this section,  $\C$ will be a weakly integral braided fusion category over $k$.} That is, $\FPdim \C$ is a natural integer.
We shall consider the category $\C$ endowed with the canonical positive spherical structure with respect to which categorical dimensions of simple objects coincide with their Frobenius-Perron dimensions \cite[Proposition 8.23]{ENO}. The corresponding balanced structure will be denoted by $\theta: \id_\C \to \id_\C$.

\subsection{De-equivariantization by the maximal Tannakian subcategory of the M\" uger center and fusion rules} Suppose that $\E \cong \Rep G$ is a Tannakian subcategory of $\C$ contained in $\C'$. This amounts to the assumption that $\theta_X = \id_X$, for every object $X$ of $\E$. Consider the de-equivariantization $\C_G$ of $\C$, which is also weakly integral and therefore a premodular category with canonical balanced structure $\overline \theta$. The action by braided autoequivalences of $G$ on $\C_G$ and the canonical functor $\C \to \C_G$  preserve the balanced structures, that is,
$$\overline\theta_{F(X)} = F(\theta_X), \quad \overline\theta_{\rho^g(Y)} = \overline\theta_Y,$$
for all objects $X$ of $\C$ and $Y$ of $\C_G$, and for all $g \in G$. See \cite[Lemme 3.3]{bruguieres}.

\begin{lemma}\label{fermionic} Let  $Z \in \C'$ be a simple object. Suppose that $Z$ is a constituent of $X \otimes X^*$ for some simple object $X$ of $\C$. Then $\theta_Z = 1$.
\end{lemma}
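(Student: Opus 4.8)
The plan is to reformulate the hypothesis so that the balancing axiom \eqref{bal} can be applied directly to a tensor product involving $Z$, rather than to $X \otimes X^*$. First I would use rigidity: since $\C$ is a fusion category, the standard duality adjunction between $-\otimes X$ and $-\otimes X^*$ gives a natural isomorphism $\Hom_\C(Z, X\otimes X^*)\cong \Hom_\C(Z\otimes X, X)$. Hence the assumption that $Z$ is a constituent of $X\otimes X^*$ is equivalent to the assertion that $X$ is a constituent of $Z\otimes X$ (equivalently, a direct summand, by semisimplicity).

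Next I would exploit that $Z\in\C'$ lies in the M\"uger center. By definition this means that $Z$ centralizes every object of $\C$; in particular $c_{X,Z}c_{Z,X}=\id_{Z\otimes X}$. Feeding this into the balancing axiom \eqref{bal} applied to the pair $(Z,X)$ yields $\theta_{Z\otimes X}=(\theta_Z\otimes\theta_X)\,c_{X,Z}c_{Z,X}=\theta_Z\otimes\theta_X$. Since $Z$ and $X$ are simple, $\theta_Z$ and $\theta_X$ act as nonzero scalars (Schur's lemma, together with the invertibility of $\theta$), so $\theta$ acts on the whole object $Z\otimes X$ as multiplication by the single scalar $\theta_Z\theta_X$.

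Finally I would compare this with the naturality of $\theta$ on the distinguished summand $X\subseteq Z\otimes X$ furnished by the first step. Naturality forces $\theta$ to act on that copy of $X$ as $\theta_X$, while the previous paragraph forces it to act as $\theta_Z\theta_X$; cancelling the nonzero scalar $\theta_X$ gives $\theta_Z=1$, as desired.

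I do not expect a serious obstacle: once the hypothesis is transported from $X\otimes X^*$ to $Z\otimes X$, the vanishing of the double braiding is exactly the centrality condition $Z\in\C'$, and the balancing axiom does the rest. The only points requiring care are the direction of the rigidity adjunction and the fact that, $Z$ and $X$ being simple, $\theta_Z$ and $\theta_X$ are genuine scalars. It is worth noting that this argument pins down $\theta_Z=1$ directly, without first invoking the $\mathbb Z_2$-grading of the symmetric category $\C'$ into its Tannakian and fermionic parts.
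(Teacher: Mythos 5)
Your proof is correct, and it takes a genuinely different --- and substantially more elementary --- route than the paper's. The paper argues by contradiction: assuming $\theta_Z = -1$, it invokes Deligne's description $\C' \cong \Rep(G,u)$ of symmetric fusion categories, de-equivariantizes $\C$ by the maximal Tannakian subcategory $\Rep G/(u) \subseteq \C'$, and uses normality of the functor $F\colon \C \to \C_{G/(u)}$ to identify $F(Z)$ with a direct sum of copies of the fermion $g \in \svect$; the hypothesis on $X \otimes X^*$ then yields an isomorphism $g \otimes \rho^s(Y) \cong \rho^t(Y)$ for a simple constituent $Y$ of $F(X)$, and the same kind of twist computation you perform (balancing plus centrality of $g$) produces the contradiction $\overline{\theta}_Y = -\overline{\theta}_Y$. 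In effect the paper reduces the statement to the invertible case (M\"uger's lemma, which it cites) at the cost of the full de-equivariantization machinery, whereas you run the twist computation directly upstairs: Frobenius reciprocity converts the hypothesis into a split inclusion $X \hookrightarrow Z \otimes X$, centrality of $Z$ together with the balancing axiom \eqref{bal} makes $\theta$ act on $Z \otimes X$ by the scalar $\theta_Z\theta_X$, and naturality of $\theta$ along the inclusion forces $\theta_Z\theta_X = \theta_X$, hence $\theta_Z = 1$. Your argument buys both brevity and generality: it uses neither weak integrality of $\C$ nor the specific canonical spherical structure, nor the a priori fact that $\theta_Z = \pm 1$ on the symmetric category $\C'$, so it proves the lemma for an arbitrary balanced structure on an arbitrary braided fusion category (semisimplicity enters only to split the inclusion). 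What the paper's route offers in exchange is mainly coherence with the surrounding section, where de-equivariantization by the maximal Tannakian subcategory of the M\"uger center is already the central working tool.
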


In the case where $Z$ is an invertible object of $\C$, the lemma is contained in \cite[Lemma 5.4]{mueger-galois}.

\begin{proof} The category $\C'$ is symmetric. Let $G$ be a finite group and $u \in Z(G)$, $u^2 = 1$, such that $\C' \cong \Rep (G, u)$.
Since $\C'$ is a symmetric fusion category, for every simple object $Z$ of $\C$ we have $\theta_Z = \pm 1$, and $Z \in \Rep G/(u)$ if and only if $\theta_Z = 1$.

Suppose on the contrary that $\theta_Z = -1$. In particular $\C'$ is not Tannakian and $\E \cong \Rep G/(u)$ is its maximal Tannakian subcategory. Thus $\E$ is a Tannakian subcategory of $\C$ contained in the M\" uger center $\C'$. Let  $\D = \C_{G/(u)}$ denote the de-equivariantization of $\C$, which is an integral fusion category. We shall indicate by $\overline\theta$ the canonical positive balanced structure in $\D$. Thus the group  $G/(u)$ acts on $\D$ by braided autoequivalences preserving the balanced structures, such that $\C \cong \D^{G/(u)}$, and the canonical functor $F:\C \to \D$ is a braided tensor functor which preserves the balanced structures.

\medbreak The essential image of $\C'$ under the functor $F$ is equivalent as a braided fusion category to the category $\svect$ of super-vector spaces; see Remark \ref{ext-sym}. Let $g \in F(\C') \cong \svect$ be the unique non-trivial invertible object; $g$ is the unique simple object of $F(\C')$ such that $\overline\theta_g = -1$.
Note that, since $Z \in \C'$, then $g \in \D'$ \cite[Lemme 2.2 (1)]{bruguieres}.

Since $\theta_Z = -1$ and $F$ preserves the balanced structures, then $\overline\theta_{F(Z)} = -\id_{F(Z)}$. By normality of the functor $F$, this implies that $\Hom_\D(\un, F(Z)) = 0$ and hence $F(Z)$ is isomorphic to a direct sum of copies of the object $g \in F(\C')$.

\medbreak Let $Y$ be a simple constituent of $F(X)$ in $\D$. The object $F(X) \in \D$ decomposes as a direct sum of conjugates of $Y$ under the action of $G/(u)$. The assumption that $Z$ is a constituent of $X \otimes X^*$ implies that $F(Z)$ is a direct summand of $F(X \otimes X^*) \cong F(X) \otimes F(X)^*$.  In particular,  there must exist $s, t \in G/(u)$ such that $g$ has positive multiplicity in the tensor product $\rho^t(Y) \otimes \rho^s(Y)^*$. This implies that
$$g \otimes \rho^s(Y) \cong \rho^t(Y).$$
Hence, $\overline\theta_{g \otimes \rho^s(Y)} = \overline\theta_{\rho^t(Y)} = \overline\theta_Y$. On the other hand, since $g$ centralizes $\rho^s(Y)$, we get from \eqref{bal} that
$$\overline\theta_{g \otimes \rho^s(Y)} = ({\overline\theta_g} \otimes {\overline\theta_{\rho^s(Y)}}) c_{\rho^s(Y), g} \, c_{g, \rho^s(Y)} = - \overline\theta_{\rho^s(Y)} = -\overline\theta_{Y}.$$
This leads to the contradiction $\overline\theta_Y = -\overline\theta_Y$. The contradiction comes from the assumption that $\theta_Z = -1$. Therefore we obtain that $\theta_Z = 1$, as claimed.
\end{proof}

\subsection{Braided fusion categories without non-pointed Tannakian subcategories}
Recall that $\C$ is a weakly integral braided fusion category.
We shall give some sufficient conditions for a tensor product of simple objects of $\C$ to be simple.

\begin{lemma}\label{xy-simple} Suppose that every Tannakian subcategory of $\C$ is pointed.
Let $X, Y$ be simple objects of $\C$ such that $G[X^*] \cap G[Y] = \un$. Assume in addition that $X$ and $Y$ projectively centralize each other.   Then $X \otimes Y$ is a simple object of $\C$.
\end{lemma}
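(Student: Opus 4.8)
The plan is to apply the simplicity criterion of \cite[Lemma 2.5]{fusion-lowdim}, already used in the proof of Proposition \ref{gallagher}: the tensor product $X \otimes Y$ is simple if and only if there is no simple object $Z \neq \un$ that is simultaneously a constituent of $X^* \otimes X$ and of $Y \otimes Y^*$. Accordingly, I would argue by contradiction, assume such a $Z$ exists, and aim to deduce $Z \cong \un$.

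First I would extract the consequences of projective centralization. By \cite[Proposition 3.22]{DGNOI}, the hypothesis that $X$ and $Y$ projectively centralize each other is equivalent to $X$ centralizing $Y \otimes Y^*$ and to $Y$ centralizing $X \otimes X^*$. Since centralizing a direct sum amounts to centralizing each simple summand, and $Z$ is a constituent of $Y \otimes Y^*$, it follows that $X$ centralizes $Z$; likewise, using the braiding isomorphism $X^* \otimes X \cong X \otimes X^*$ (so that $Z$ is also a constituent of $X \otimes X^*$), $Y$ centralizes $Z$. As centralization is symmetric, $Z$ centralizes both $X$ and $Y$.

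Next, let $\D$ be the fusion subcategory of $\C$ generated by $X$ and $Y$. Then $Z \in \D$, being a constituent of $X^* \otimes X$, while the previous step shows that $Z$ lies in the centralizer of $X$ and of $Y$, hence in the centralizer of all of $\D$. Therefore $Z$ belongs to the M\"uger center of $\D$, a symmetric fusion subcategory of $\C$. Applying Lemma \ref{fermionic} inside $\D$ — whose proof goes through for any weakly integral braided fusion category and uses only the restriction of the balanced structure of $\C$ — to the simple object $Z$, a constituent of $X \otimes X^*$, yields $\theta_Z = 1$. By the description of the maximal Tannakian subcategory of a symmetric category as the full subcategory of objects with trivial twist \cite[Corollary 2.50]{DGNOI}, $Z$ then lies in a Tannakian subcategory of $\C$, which is pointed by hypothesis; hence $Z$ is invertible.

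Finally, I would use the characterization of invertible constituents recalled in Section \ref{fc}: an invertible object $g$ occurs in $A \otimes B^*$ exactly when $g \otimes B \cong A$. Applied to $Z$ as a constituent of $X^* \otimes X = X^* \otimes (X^*)^*$, this gives $Z \otimes X^* \cong X^*$, that is $Z \in G[X^*]$; applied to $Z$ as a constituent of $Y \otimes Y^*$, it gives $Z \otimes Y \cong Y$, that is $Z \in G[Y]$. Thus $Z \in G[X^*] \cap G[Y] = \un$, contradicting $Z \neq \un$, so no such $Z$ exists and $X \otimes Y$ is simple. The main obstacle is that $Z$ need not lie in the M\"uger center $\C'$ of the whole category but only in that of $\D$, so the crux of the argument is organizing it so that Lemma \ref{fermionic} and the pointedness hypothesis are invoked for the subcategory generated by $X$ and $Y$ rather than for $\C$ itself.
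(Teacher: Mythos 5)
Your proof is correct and follows essentially the same route as the paper's: both invoke the simplicity criterion of \cite[Lemma 2.5]{fusion-lowdim}, pass to the fusion subcategory generated by $X$ and $Y$, place the common constituent $Z$ in its M\"uger center, and apply Lemma \ref{fermionic} together with the pointedness hypothesis on Tannakian subcategories. The only (immaterial) difference is the order of the two contradictions: the paper first uses $G[X^*]\cap G[Y]=\un$ to conclude $Z$ is non-invertible and then contradicts pointedness, whereas you first deduce invertibility of $Z$ from pointedness and then contradict $G[X^*]\cap G[Y]=\un$.
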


\begin{proof} Suppose $Z$ is a nontrivial common simple constituent of $X^* \otimes X$ and $Y \otimes Y^*$. Since by assumption $G[X^*]\cap G[Y] = \un$, then $Z$ is not invertible. In addition, since $X$ and $Y$ projectively centralize each other, then $Z$ centralizes $X$ and $Y$.

Let $\tilde \C$ be the fusion subcategory of $\C$ generated by $X$ and $Y$. Then $Z$ belongs to the M\" uger center $\tilde\C'$. Lemma \ref{fermionic} implies that $\theta_Z = 1$, hence $Z$ generates a Tannakian subcategory of $\C$. Since $Z$ is not invertible, this contradicts the assumption that $\C$ contains no non-pointed Tannakian subcategories.

Therefore $X^* \otimes X$ and $Y \otimes Y^*$ cannot have non-invertible common simple constituents.  Then the tensor product $X \otimes Y$ is simple, by \cite[Lemma 2.5]{fusion-lowdim}. \end{proof}

\begin{remark}\label{xy-simple-2}  Recall that for every simple object $Z$, the order of the group $G[Z]$ of invertible simple constituents of $Z\otimes Z^*$ divides $(\FPdim Z)^2$ \cite[Lemma 2.2]{fusion-lowdim}.

Suppose that $\C$ is integral. Let $X$ and $Y$ be simple objects of $\C$ such that $\FPdim X$ and $\FPdim Y$ are relatively prime and $X$ and $Y$ projectively centralize each other.  Then the orders of the groups $G[X^*]$ and $G[Y]$ are relatively prime too and Lemma \ref{xy-simple} applies. \end{remark}

\begin{corollary}\label{cor-fpcoprime}
Let $\C$ be an integral braided fusion category such that every Tannakian subcategory of  $\C$ is pointed.
Suppose that $X, Y$ are simple objects of $\C$ such that $\FPdim X$ and $\FPdim Y$ are relatively prime. Then one of the following holds:
\begin{enumerate}
\item[(i)]\label{1} The tensor product $X \otimes Y$ is simple, or
\item[(ii)]\label{2} $S_{X, Y} = 0$.
\end{enumerate} \end{corollary}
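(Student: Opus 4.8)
The plan is to dichotomize according to whether $X$ and $Y$ projectively centralize each other. If they do, then since $\FPdim X$ and $\FPdim Y$ are relatively prime, Remark \ref{xy-simple-2}---which records that $|G[Z]|$ divides $(\FPdim Z)^2$ for every simple $Z$ and then invokes Lemma \ref{xy-simple}---shows that $X\otimes Y$ is simple, giving alternative (i). It therefore remains to prove that if $X$ and $Y$ do \emph{not} projectively centralize each other, then $S_{X,Y}=0$, which is alternative (ii).

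For the remaining case I would expand $S_{X,Y}$ by means of the balancing axiom \eqref{bal}. Writing $X\otimes Y\cong\bigoplus_Z N_{XY}^Z\,Z$ and using that the squared braiding $c_{Y,X}c_{X,Y}$ acts on the isotypic component of a simple $Z$ by the scalar $\theta_Z\,\theta_X^{-1}\theta_Y^{-1}$, one obtains
$$\frac{S_{X,Y}}{\FPdim X\,\FPdim Y}=\sum_{Z}\frac{N_{XY}^Z\,\FPdim Z}{\FPdim X\,\FPdim Y}\,\frac{\theta_Z}{\theta_X\,\theta_Y}.$$
Because $\C$ is integral, the coefficients $N_{XY}^Z\,\FPdim Z/(\FPdim X\,\FPdim Y)$ are nonnegative rationals summing to $1$ (the denominator equals $\sum_Z N_{XY}^Z\,\FPdim Z$), and the twists $\theta_Z$ are roots of unity; hence the right-hand side is a convex combination of roots of unity. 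Its modulus is thus at most $1$, with equality precisely when all the scalars $\theta_Z\theta_X^{-1}\theta_Y^{-1}$ with $N_{XY}^Z>0$ coincide, i.e.\ exactly when $c_{Y,X}c_{X,Y}$ is a scalar on $X\otimes Y$; since we are assuming $X$ and $Y$ do not projectively centralize each other, I conclude $|S_{X,Y}|<\FPdim X\,\FPdim Y$ strictly.

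To pass from this strict inequality to vanishing I would exploit the coprimality arithmetically. The normalized rows $Z\mapsto S_{X,Z}/\FPdim X$ and $Z\mapsto S_{Z,Y}/\FPdim Y$ are ring homomorphisms $K(\C)\to k$---the Verlinde characters coming from \eqref{verlinde} when $\C$ is modular, and in general because two loops encircling a common simple strand may be fused---so both $S_{X,Y}/\FPdim X$ and $S_{X,Y}/\FPdim Y$ are algebraic integers. Choosing $a,b\in\mathbb Z$ with $a\,\FPdim X+b\,\FPdim Y=1$, the quantity
$$\frac{S_{X,Y}}{\FPdim X\,\FPdim Y}=a\,\frac{S_{X,Y}}{\FPdim Y}+b\,\frac{S_{X,Y}}{\FPdim X}$$
is an algebraic integer as well. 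Applying any automorphism $\sigma$ of the relevant cyclotomic field to the convex-combination formula above, and noting that $\sigma$ fixes the integers $N_{XY}^Z,\FPdim Z,\FPdim X,\FPdim Y$ while sending each twist to a root of unity, I see that every Galois conjugate of $S_{X,Y}/(\FPdim X\,\FPdim Y)$ is again a convex combination of roots of unity, hence has modulus at most $1$. By Kronecker's theorem an algebraic integer all of whose conjugates lie in the closed unit disc is $0$ or a root of unity; the strict inequality excludes a root of unity, whence $S_{X,Y}=0$.

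The main obstacle is the assertion that $S_{X,Y}/\FPdim X$ is an algebraic integer. In the nondegenerate case this is immediate from the Verlinde formula \eqref{verlinde}, but the corollary does not assume $\C$ nondegenerate, so I would need the premodular statement that a normalized $S$-matrix row defines a character of $K(\C)$; this rests on the graphical identity that parallel loops around a fixed simple strand fuse, and would have to be justified carefully for the possibly degenerate ribbon category $\C$. The remaining ingredients---the reduction to the two cases, the convex-combination bound, and the Kronecker argument---are routine.
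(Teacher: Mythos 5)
Your proof is correct, and in the centralizing case it is identical to the paper's: you combine Remark \ref{xy-simple-2} with Lemma \ref{xy-simple} to conclude that $X \otimes Y$ is simple. Where you diverge is in the complementary case. The paper's entire remaining argument is a citation of \cite[Lemma 7.1]{ENO2}, which asserts precisely the dichotomy you set out to prove --- for simple objects of coprime Frobenius-Perron dimension, either $S_{X,Y}=0$ or $X$ and $Y$ projectively centralize each other --- whereas you reconstruct that dichotomy from scratch. Your reconstruction (the convex-combination-of-roots-of-unity bound coming from the balancing axiom \eqref{bal}, algebraic integrality of $S_{X,Y}/(\FPdim X \, \FPdim Y)$ via B\'ezout, stability of the bound under Galois conjugation, and Kronecker's theorem) is in substance the standard proof of that lemma, so what your route buys is self-containedness rather than a new mechanism. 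The single ingredient you flag as the ``main obstacle'' --- that $Y \mapsto S_{X,Y}/\FPdim X$ is a character of $K(\C)$ without any non-degeneracy hypothesis --- is indeed the point requiring a ribbon (rather than modular) justification, and it is standard: the identity $(S_{X,Y}/\FPdim X)(S_{X,Z}/\FPdim X) = \sum_W N_{YZ}^W \, S_{X,W}/\FPdim X$ is proved graphically by fusing two loops encircling the $X$-strand, an argument that uses only the braiding, the twist, and $\FPdim X \neq 0$; see for instance \cite[Section 3.1]{BK}, where such relations are derived before invertibility of the $S$-matrix is ever invoked, or the treatment of premodular categories in \cite{DGNOI}. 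The same remark applies to your use of Vafa's theorem (twists are roots of unity), which also holds in the premodular setting. So the proof is complete; it simply replaces the paper's one-line citation by a proof of the cited lemma.
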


\begin{proof} Since $\FPdim X$ and $\FPdim Y$ are relatively prime, \cite[Lemma 7.1]{ENO2} implies that either $S_{X, Y} = 0$, and (ii) holds, or $X$ and $Y$ projectively centralize each other. If the second possibility holds, then $X \otimes Y$ is simple, in view of Lemma \ref{xy-simple} and Remark \ref{xy-simple-2}. Hence (i) holds in this case. \end{proof}

\section{The Frobenius-Perron graphs of a non-degenerate integral fusion category}
Throughout this section $\C$ will be a non-degenerate integral braided fusion category.
As before, we consider $\C$ endowed with its canonical positive spherical structure, so that $\C$ is a modular category.

\begin{example} An example of a non-degenerate integral fusion category is given by the Drinfeld center $\Z(\D)$, where $\D$ is an integral fusion category. In view of \cite[Proposition 4.5]{ENO2}, $\Z(\D)$ is solvable if and only if $\D$ is solvable.

\medbreak Consider the case where $\D$ is itself a non-degenerate braided fusion category, then there is an equivalence of braided fusion categories $\Z(\D) \cong \D \boxtimes \D^{\rev}$. In particular, every simple object of $\Z(\D)$ is isomorphic to one of the form $X \boxtimes Y$, where $X, Y \in  \Irr(\D)$, and we have $\FPdim (X \boxtimes Y) = \FPdim X \, \FPdim Y$. Thus in this case the graph $\Delta(\Z(\D))$ is the complete graph on the vertex set $\pi(\cd(\D))$.   \end{example}

\begin{lemma}\label{s-null} Suppose that every Tannakian subcategory of $\C$ is pointed. Let $X, Y$ be non-invertible simple objects of $\C$. Then we have:
\begin{enumerate}
\item[(i)] If $d(\FPdim X, \FPdim Y) > 2$ in the graph $\Gamma (\C)$, then $S_{X, Y} = 0$.
\item[(ii)]\label{const-xx*}  If $\FPdim X$ and $\FPdim Y$ belong to different connected components of $\Gamma (\C)$, then the multiplicity of $Y$ in $X \otimes X^*$ is determined by the formula 
\begin{equation}\label{nzxx*}\frac{\FPdim \C}{(\FPdim X)^2}\, N^Y_{X, X^*} = \sum_{\FPdim T = 1} S_{Y^*, T}.\end{equation}
\end{enumerate}
\end{lemma}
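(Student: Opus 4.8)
The plan is to handle the two parts in turn, using part (i) as an ingredient in the proof of part (ii).

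For part (i), I would first note that $d(\FPdim X, \FPdim Y) > 2$ in $\Gamma(\C)$ forces $\FPdim X$ and $\FPdim Y$ to be relatively prime, since otherwise they would be adjacent and hence at distance $1$. This puts me in position to apply Corollary \ref{cor-fpcoprime}: either $S_{X,Y} = 0$, which is exactly what I want, or the tensor product $X \otimes Y$ is simple. I would rule out the second alternative by a distance argument in $\Gamma(\C)$. If $X \otimes Y$ were simple, then $\FPdim(X \otimes Y) = \FPdim X \, \FPdim Y$ would be a vertex of $\Gamma(\C)$ (it exceeds $1$ because both $X$ and $Y$ are non-invertible, and it differs from both $\FPdim X$ and $\FPdim Y$ for the same reason). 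It shares the divisor $\FPdim X > 1$ with $\FPdim X$ and the divisor $\FPdim Y > 1$ with $\FPdim Y$, giving a path $\FPdim X - \FPdim(X \otimes Y) - \FPdim Y$ of length $2$ and so $d(\FPdim X, \FPdim Y) \leq 2$, a contradiction. Hence $S_{X,Y} = 0$.

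For part (ii), I would start from the Verlinde formula \eqref{verlinde} applied to the multiplicity of $Y$ in $X \otimes X^*$:
\[
N^Y_{X, X^*} = \frac{1}{\dim \C}\sum_{T \in \Irr(\C)} \frac{S_{XT}\, S_{X^*T}\, S_{Y^*T}}{d_T}.
\]
Using $S_{X^*T} = \overline{S_{XT}}$ from the $S$-matrix symmetries, the product $S_{XT}\, S_{X^*T}$ collapses to $|S_{XT}|^2$, so each summand becomes $|S_{XT}|^2\, S_{Y^*T}/d_T$. The crucial step is to show that only invertible $T$ contribute. For a non-invertible simple $T$, the term vanishes unless both $S_{XT} \neq 0$ and $S_{YT} \neq 0$ (recall $S_{Y^*T} = \overline{S_{YT}}$). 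But if both were nonzero, then since $X, T$ and $Y, T$ are all non-invertible, the contrapositive of part (i) would give $d(\FPdim X, \FPdim T) \leq 2$ and $d(\FPdim Y, \FPdim T) \leq 2$, placing $\FPdim X$, $\FPdim T$ and $\FPdim Y$ in one connected component of $\Gamma(\C)$, against the hypothesis that $\FPdim X$ and $\FPdim Y$ lie in different components. Thus every non-invertible $T$ contributes $0$.

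It then remains to evaluate the invertible terms: for $\FPdim T = 1$ one has $d_T = \FPdim T = 1$ and $|S_{XT}| = \FPdim X$ by the criterion of \cite[Lemma 6.1]{gel-nik}, so each such term equals $(\FPdim X)^2\, S_{Y^*T}$. Summing and using $\dim \C = \FPdim \C$ (the canonical spherical structure is positive) gives $N^Y_{X, X^*} = \frac{(\FPdim X)^2}{\FPdim \C}\sum_{\FPdim T = 1} S_{Y^*T}$, which rearranges to the asserted identity \eqref{nzxx*}. The main obstacle is precisely this vanishing of the non-invertible terms: the whole argument rests on being able to invoke part (i) in contrapositive form for the pairs $(X,T)$ and $(Y,T)$, which is why part (i) is stated for non-invertible objects and why the stronger hypothesis of lying in \emph{different connected components} (rather than merely having coprime dimensions) is what is needed to close the proof.
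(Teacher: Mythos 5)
Your proof is correct and follows essentially the same route as the paper's: part (i) via Corollary \ref{cor-fpcoprime} after ruling out simplicity of $X \otimes Y$ by a distance argument, and part (ii) via the Verlinde formula, using part (i) in contrapositive form to kill all terms with $T$ non-invertible and \cite[Lemma 6.1]{gel-nik} to evaluate $|S_{XT}|^2 = (\FPdim X)^2$ for invertible $T$. The only cosmetic difference is that the paper organizes the sum by splitting off the summands with $d(\FPdim T, \FPdim X) \leq 2$ and then shows $S_{Y^*,T} = 0$ on that block, whereas you argue the vanishing term by term; the logic is identical.
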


\begin{proof} (i). The assumption implies that $\FPdim X$ and $\FPdim Y$ are relatively prime. Moreover, the tensor product $X \otimes Y$ cannot be simple, since otherwise we would have $d(\FPdim X, \FPdim Y) \leq 2$ in $\Gamma (\C)$. It follows from Corollary \ref{cor-fpcoprime} that $S_{X, Y} = 0$. This shows (i).

\medbreak (ii). Let $\Gamma_1$ denote the connected component of $\FPdim X$. The multiplicity of $Y$ in $X \otimes X^*$ is given by the Verlinde formula \eqref{verlinde}.

In view of part (i), the nonzero summands in this expression correspond to simple objects $T$ such that $d(\FPdim T, \FPdim X) \leq 2$, so in particular $\FPdim T$ belongs to the connected component $\Gamma_1$ of $\FPdim X$, or else $T$ is invertible (that is, $\FPdim T = 1$). Thus we have
\begin{align*}N_{X, X^*}^Y & = \frac{1}{\FPdim \C}\sum_{d(\FPdim T, \FPdim X) \leq 2} \frac{S_{XT} \, S_{X^*T} \, S_{Y^*T}}{\FPdim T} \\ & + \frac{1}{\FPdim \C} \sum_{\FPdim T = 1} S_{XT} \, S_{X^*T} \, S_{Y^*T}.\end{align*}
Note that if $T$ is an invertible object, then $S_{XT} \, S_{X^*T} = |S_{X, T}|^2 = (\FPdim X)^2$, by \cite[Lemma 6.1]{gel-nik}. On the other hand, if $d(\FPdim T, \FPdim X) \leq 2$, then $\FPdim T$ and $\FPdim Y$ are not connected in $\Gamma(\C)$,  since  by assumption $\FPdim Y$ does not belong to $\Gamma_1$. Hence $S_{Y^*, T} = 0$ in this case, by part (i).

Therefore the above expression reduces to
\begin{equation}N_{X, X^*}^Y = \frac{(\FPdim X)^2}{\FPdim \C} \, \sum_{\FPdim T = 1} S_{Y^*T}.\end{equation}
 This implies (ii) and finishes the proof of the lemma.
 \end{proof}

\begin{remark}\label{rmk-snull} Keep the assumption in Lemma \ref{s-null} (ii). Thus $X$ and $Y$ are non-invertible simple objects of $\C$ such that $\FPdim X$ and $\FPdim Y$ belong to different connected components of $\Gamma (\C)$. Assume in addition that $Y \in \C_{ad}$. Then $S_{Y^*, T} = \FPdim Y$, for every invertible object $T$, because $\C_{ad} = \C_{pt}'$ \cite[Corollary 3.27]{DGNOI}. Hence relation \eqref{nzxx*} becomes 
$$\frac{\FPdim \C}{(\FPdim X)^2}\, N^Y_{X, X^*} = \FPdim Y \, \FPdim \C_{pt}.$$ In particular, we get that $N^Y_{X, X^*} > 0$.
\end{remark}

\begin{proposition}\label{nd-connect} Let $\C$ be a non-degenerate integral braided fusion category and assume that every Tannakian subcategory of  $\C$ is pointed. Then the graph $\Gamma(\C)$ has at most two connected components. \end{proposition}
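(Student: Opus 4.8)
The plan is to argue by contradiction. Assume $\Gamma(\C)$ has at least three connected components; I will manufacture a non-invertible simple object of Frobenius-Perron dimension $1$, an absurdity. The engine of the argument is the adjoint subcategory $\C_{ad}$ together with the exact multiplicity formula from Remark~\ref{rmk-snull}: if $X$ is a non-invertible simple object and $Y \in \C_{ad}$ is non-invertible simple with $\FPdim X$ and $\FPdim Y$ in different components of $\Gamma(\C)$, then, writing $D = \FPdim \C_{ad}$ and using $\FPdim \C = \FPdim \C_{pt} \, \FPdim \C_{ad}$,
\[
N^Y_{X, X^*} = \frac{(\FPdim X)^2 \, \FPdim Y}{D} > 0 .
\]
I will use two elementary consequences: this quantity is a positive integer, and $\FPdim W$ divides $D$ for every simple $W \in \C_{ad}$ (by \cite[Theorem~2.11]{ENO2}). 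Finally, two dimensions lying in different components of $\Gamma(\C)$ are coprime, since a common prime factor would join them by an edge.

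The first case is when at least two components, say $\Gamma_a$ and $\Gamma_b$, each contain the dimension of a non-invertible object of $\C_{ad}$; choose such objects $Y \in \C_{ad} \cap \Gamma_a$ and $Z \in \C_{ad} \cap \Gamma_b$, and let $X$ be a non-invertible simple object whose dimension lies in a third component $\Gamma_c$ (which exists by hypothesis). The displayed formula applied to $X$ and $Y$ gives $D \mid (\FPdim X)^2 \FPdim Y$, while $\FPdim Z \mid D$; hence $\FPdim Z \mid (\FPdim X)^2 \FPdim Y$. As $\FPdim Z$ is coprime to both $\FPdim X$ and $\FPdim Y$, this forces $\FPdim Z = 1$, contradicting that $Z$ is non-invertible.

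The remaining case is when at most one component meets $\C_{ad}$, so that at least two components are disjoint from the dimensions of non-invertible objects of $\C_{ad}$. Here I will show that $\C_{ad}$ is pointed, i.e. $D = m$ with $m = \FPdim(\C_{ad} \cap \C_{pt})$. If $\C_{ad}$ has no non-invertible simple object this is immediate; otherwise exactly one component, say $\Gamma_0$, meets $\C_{ad}$, and at least two others are ``$\C_{ad}$-empty''. For $X$ non-invertible with $\FPdim X$ in a $\C_{ad}$-empty component, all non-invertible constituents of $X \otimes X^*$ lie in $\C_{ad}$, hence in other components, and by Remark~\ref{rmk-snull} each of them occurs; balancing Frobenius-Perron dimensions yields
\[
(\FPdim X)^2 = |G[X]| + \frac{(\FPdim X)^2}{D}\,(D - m), \qquad \text{i.e.}\qquad |G[X]| \, D = m \, (\FPdim X)^2 .
\]
A prime-by-prime comparison of valuations, carried out over the (at least two) empty components and using $\FPdim W \mid D$ for $W \in \C_{ad}$, shows that every prime divisor of $D$ lies in $\Gamma_0$; in particular $\gcd(D, \FPdim X) = 1$, so $D \mid m$ and thus $D = m$. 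In all cases $\C_{ad}$ is pointed, hence $\C$ is nilpotent (its upper central series stops at the second step), and Proposition~\ref{nilp} makes $\Delta(\C)$ — equivalently $\Gamma(\C)$, by Remark~\ref{graphs-sets} — a complete, hence connected, graph, contradicting the three components.

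The delicate point is this last case: the divisibility contradiction of the main case is immediate once two components are known to meet $\C_{ad}$, so the real work is to rule out components carrying no $\C_{ad}$-dimension. The valuation bookkeeping that forces $\gcd(D, \FPdim X) = 1$ — and thereby collapses $\C_{ad}$ to a pointed category, where nilpotency and Proposition~\ref{nilp} take over — is where the self-duality of the modular category is genuinely used, through the $S$-matrix input packaged in Lemma~\ref{s-null} and Remark~\ref{rmk-snull}. I expect this to be the step requiring the most care.
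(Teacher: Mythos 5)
Your Case~1 is correct and complete: integrality of $N^Y_{X,X^*}=(\FPdim X)^2\FPdim Y/D$ gives $D\mid(\FPdim X)^2\FPdim Y$, and playing this against $\FPdim Z\mid D$ and coprimality across components forces $\FPdim Z=1$. The genuine gap is in Case~2, exactly at the step you flagged as delicate. The conclusion you attribute to the ``prime-by-prime comparison of valuations'' --- that every prime divisor of $D$ lies in $\Gamma_0$, hence $\gcd(D,\FPdim X)=1$, hence $D\mid m$ --- is not merely unproved: it is irreconcilable with the equation you yourself derived. Since $X\otimes X^*$ lies in $\C_{ad}$, the group $G[X]$ is a subgroup of the group of invertible objects of $\C_{ad}$, so $|G[X]|$ divides $m$; substituting into $|G[X]|\,D=m\,(\FPdim X)^2$ yields $D=\bigl(m/|G[X]|\bigr)(\FPdim X)^2$, i.e.\ $(\FPdim X)^2\mid D$. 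Thus the primes of $\FPdim X$, which lie outside $\Gamma_0$, \emph{do} divide $D$, and $\gcd(D,\FPdim X)=\FPdim X>1$. No valuation bookkeeping based on the facts you list can produce your claim: the case hypotheses are of course ultimately contradictory, but a proof must exhibit that contradiction, and your sketch points in the opposite direction.

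The repair is easy and uses only tools you already set up: rerun your Case~1 trick inside Case~2. Fix a non-invertible simple $Y\in\C_{ad}$, whose dimension lies in $\Gamma_0$, and non-invertible simples $X_1,X_2$ whose dimensions lie in two distinct $\C_{ad}$-empty components. Integrality of $N^Y_{X_i,X_i^*}=(\FPdim X_i)^2\FPdim Y/D$ (Remark~\ref{rmk-snull}) gives $D\mid(\FPdim X_i)^2\FPdim Y$ for $i=1,2$, and since $\FPdim X_1$ and $\FPdim X_2$ are coprime, $D\mid\FPdim Y$; this is absurd because $1<\FPdim Y<1+(\FPdim Y)^2\le D$. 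So the subcase where $\C_{ad}$ is not pointed cannot occur, and your nilpotency endgame via Proposition~\ref{nilp} disposes of the pointed case. With this fix your argument is a genuinely different proof from the paper's: the paper never invokes $\C_{ad}$, the identity $\FPdim\C_{pt}\,\FPdim\C_{ad}=\FPdim\C$, or the divisibility theorem of \cite[Theorem 2.11]{ENO2}. Instead it splits on whether some $X\otimes X^*$ has a non-invertible constituent $Z$ with $\FPdim Z$ outside the component of $\FPdim X$; in the negative case it deduces connectedness from \cite[Lemma 2.5]{fusion-lowdim}, and in the positive case it applies Lemma~\ref{s-null}~(ii) to $X$ and to a third object $Y$ against the same $Z$, obtaining $(\FPdim X)^2 N^Z_{Y,Y^*}=(\FPdim Y)^2 N^Z_{X,X^*}$ and the impossibility $(\FPdim X)^2\mid N^Z_{X,X^*}$.
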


Observe that the proposition applies, in particular, when the non-degenerate fusion category $\C$ contains no nontrivial Tannakian subcategory.

\begin{proof} Let us first consider the case where for every non-invertible simple object $X$ and for every non-invertible simple constituent $Z$ of $X \otimes X^*$, $\FPdim X$ is connected to $\FPdim Z$ in $\Gamma (\C)$.   

We claim that in this case the graph $\Gamma(\C)$ is connected. Indeed, suppose on the contrary that there exist $X, Y \in \Irr (\C)$ such that $\FPdim X$, $\FPdim Y$ belong to different connected components of $\Gamma (\C)$. In particular $\FPdim X$ and $\FPdim Y$ are relatively prime and therefore $G[X^*]\cap G[Y] = \un$.

Since the tensor product $X \otimes Y$ cannot be simple (otherwise we would have $d(\FPdim X, \FPdim Y) \leq 2$), then it follows from \cite[Lemma 2.5]{fusion-lowdim} that $X^* \otimes X$ and $Y \otimes Y^*$ must have a common simple constituent $Z$, which is necessarily non-invertible. We thus reach a contradiction since, by assumption, the Frobenius-Perron dimensions of non-invertible components of $X^* \otimes X$ (respectively, $Y \otimes Y^*$) belong to the same component as $\FPdim X$ (respectively as $\FPdim Y$). This contradiction shows that $\Gamma(\C)$ must be connected, as claimed. 

\medbreak It remains to consider the case where there exists a non-invertible simple object $X$ and a non-invertible simple constituent $Z$ of $X \otimes X^*$, such that $\FPdim X$ is not connected to $\FPdim Z$ in $\Gamma (\C)$. 

In this case it follows from Lemma \ref{s-null} (ii) that the multiplicity of $Z$ in $X \otimes X^*$ satisfies
$$\frac{\FPdim \C}{(\FPdim X)^2}\, N^Z_{X, X^*} = \sum_{\FPdim T = 1} S_{Z^*, T}.$$
Suppose that $Y$ is a non-invertible simple object such that $\FPdim Y$ does not belong to the connected component of $\FPdim X$ in $\Gamma(\C)$. If $\FPdim Y$ is not connected to $\FPdim Z$, then Lemma \ref{s-null} (ii) also implies that the multiplicity of $Z$ in $Y \otimes Y^*$ satisfies 
$$\frac{\FPdim \C}{(\FPdim Y)^2}\, N^Z_{Y, Y^*} = \sum_{\FPdim T = 1} S_{Z^*, T} = \frac{\FPdim \C}{(\FPdim X)^2}\, N^Z_{X, X^*}.$$
In particular, $N^Z_{Y, Y^*} > 0$ and the following relation holds:
$$(\FPdim X)^2\, N^Z_{Y, Y^*} = (\FPdim Y)^2\, N^Z_{X, X^*}.$$
By assumption, $\FPdim X$ and $\FPdim Y$ are relatively prime, and thus we obtain that $(\FPdim X)^2$ divides $N^Z_{X, X^*}$. This is impossible, because $N^Z_{X, X^*} \FPdim Z < \FPdim (X \otimes X^*) = (\FPdim X)^2$. This contradiction shows that for every non-invertible simple object $Y$, $\FPdim Y$ must be connected either to $\FPdim X$ or to $\FPdim Z$. Therefore $\Gamma(\C)$ has two connected components in this case. This finishes the proof of the proposition.
\end{proof}

The statement of Proposition \ref{nd-connect} can be strengthned as follows in the case where $\C$ has no nontrivial pointed subcategories.

\begin{proposition}\label{npt-connected} Let $\C$ be a non-degenerate integral braided fusion category such that  $\C_{pt} = \vect$. Assume in addition that $\C$ contains no nontrivial Tannakian subcategories. Then the graph $\Gamma(\C)$ is connected. \end{proposition}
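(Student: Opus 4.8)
The plan is to argue by contradiction, using Proposition \ref{nd-connect} to reduce the problem to ruling out the case of exactly two connected components, and then to exploit the hypothesis $\C_{pt} = \vect$ in order to obtain \emph{exact} formulas for two fusion multiplicities that turn out to be mutually incompatible.

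First I would observe that the present hypotheses imply those of Proposition \ref{nd-connect}: if $\C$ has no nontrivial Tannakian subcategory, then in particular every Tannakian subcategory of $\C$ is pointed. Hence $\Gamma(\C)$ has at most two connected components, and it suffices to exclude the disconnected case. Suppose then, for contradiction, that $\Gamma(\C)$ has exactly two connected components. Inspecting the proof of Proposition \ref{nd-connect}, the connected alternative occurs precisely when every non-invertible simple constituent of $X \otimes X^*$ lies in the connected component of $\FPdim X$; therefore, in the disconnected case there must exist a non-invertible simple object $X$ together with a non-invertible simple constituent $Z$ of $X \otimes X^*$ such that $\FPdim X$ and $\FPdim Z$ lie in different components of $\Gamma(\C)$. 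In particular $\FPdim X$ and $\FPdim Z$ are relatively prime.

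The heart of the argument is to apply Lemma \ref{s-null} (ii) to \emph{both} ordered pairs $(X, Z)$ and $(Z, X)$, which is legitimate since both objects are non-invertible and their dimensions lie in distinct components. Because $\C_{pt} = \vect$, the only invertible object is $\un$, and $S_{W^*, \un} = \FPdim W$ for every simple $W$; hence the sum $\sum_{\FPdim T = 1} S_{W^*, T}$ appearing in \eqref{nzxx*} collapses to the single term $\FPdim W$. This yields the exact values
$$N^Z_{X, X^*} = \frac{(\FPdim X)^2\, \FPdim Z}{\FPdim \C}, \qquad N^X_{Z, Z^*} = \frac{(\FPdim Z)^2\, \FPdim X}{\FPdim \C}.$$
Both are the genuine multiplicities, hence non-negative integers; the first is positive because $Z$ is a constituent of $X \otimes X^*$ by construction, and the second is then positive as well, being a strictly positive rational number forced to be an integer. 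Writing $a = \FPdim X$ and $b = \FPdim Z$, these formulas give the symmetric relation $N^Z_{X, X^*}\, b = N^X_{Z, Z^*}\, a$.

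Finally I would extract the contradiction from coprimality together with elementary dimension bounds. Since $\gcd(a, b) = 1$, the relation $N^Z_{X, X^*}\, b = N^X_{Z, Z^*}\, a$ forces $a \mid N^Z_{X, X^*}$ and $b \mid N^X_{Z, Z^*}$, whence $N^Z_{X, X^*} \geq a$ and $N^X_{Z, Z^*} \geq b$. On the other hand, since $Z \neq \un$ and $\un$ occurs in $X \otimes X^*$ with multiplicity one, the $Z$-isotypic part of $X \otimes X^*$ has Frobenius-Perron dimension at most $a^2 - 1$, so $N^Z_{X, X^*}\, b \leq a^2 - 1 < a^2$; together with $N^Z_{X, X^*} \geq a$ this gives $b < a$. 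The symmetric estimate for the $X$-isotypic part of $Z \otimes Z^*$ gives $a < b$, a contradiction. Therefore $\Gamma(\C)$ cannot have two components, and it is connected. I expect the only point requiring genuine care to be the justification that both multiplicities are given exactly by \eqref{nzxx*} and are strictly positive integers; once this is secured the number-theoretic contradiction is immediate, and non-degeneracy enters only through the Verlinde formula already packaged inside Lemma \ref{s-null}.
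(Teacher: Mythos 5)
Your proof is correct, but the endgame differs from the paper's. Both arguments rest on the same key input, Lemma \ref{s-null} (ii), whose right-hand side collapses to the single term $\FPdim Z$ because $\C_{pt}=\vect$ leaves $\un$ as the only invertible object. From there the paper is more direct: it takes \emph{any} two non-invertible simple objects $X, Y$ whose dimensions lie in distinct components (no extraction of a constituent pair from the proof of Proposition \ref{nd-connect} is needed), observes that non-degeneracy gives $\C_{ad}=\C_{pt}'=\C$, and invokes \cite[Theorem 2.11 (i)]{ENO2} --- in a non-degenerate category $(\FPdim X)^2$ divides $\FPdim\C$ --- so that, by coprimality, $\FPdim \C/\bigl((\FPdim X)^2(\FPdim Y)^2\bigr)$ is a positive integer and formula \eqref{nzxx*} reads ``integer $=1/\FPdim Y$'', an immediate contradiction. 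You avoid that square-divisibility theorem altogether: applying the multiplicity formula symmetrically to the pairs $(X,Z)$ and $(Z,X)$ gives $N^Z_{X,X^*}\,\FPdim Z = N^X_{Z,Z^*}\,\FPdim X$, and then coprimality together with the trivial bounds $N^Z_{X,X^*}\,\FPdim Z\le (\FPdim X)^2-1$ and $N^X_{Z,Z^*}\,\FPdim X\le (\FPdim Z)^2-1$ forces $\FPdim Z<\FPdim X$ and $\FPdim X<\FPdim Z$ simultaneously. Your route is thus more elementary --- only the lemma plus arithmetic --- at the cost of a somewhat longer argument; the paper's buys brevity at the cost of quoting a deeper structural result. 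One simplification available to you: your own positivity argument (a strictly positive rational number that must equal a non-negative integer is at least $1$) applies just as well to $N^Z_{X,X^*}$ as to $N^X_{Z,Z^*}$, so you never actually needed $Z$ to be a constituent of $X\otimes X^*$; any two non-invertible simples in distinct components would do, and the appeal to the internal structure of the proof of Proposition \ref{nd-connect} can be dropped entirely.
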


Observe that since $\C_{pt} = \vect$, the assumption that $\C$ contains no nontrivial Tannakian subcategories is equivalent to the assumption that every Tannakian subcategory is pointed.

\begin{proof} 
Since $\C$ is non-degenerate,  then $(\FPdim X)^2$ divides $\FPdim \C$ for every simple object $X$ of $\C$ \cite[Theorem 2.11 (i)]{ENO2}.
Suppose on the contrary that $X$ and $Y$ are non-invertible simple objects such that $\FPdim X$ and $\FPdim Y$ belong to different connected components of $\Gamma(\C)$. 
In particular $\FPdim X$ and $\FPdim Y$ are relatively prime and therefore the product $(\FPdim X)^2(\FPdim Y)^2$ divides $\FPdim \C$.

Since $\C$ is non-degenerate, then $\C_{ad} = \C_{pt}'$. Hence the assumption implies that $\C =\C_{ad}$. 
As pointed out in Remark \ref{rmk-snull}, the multiplicity of $Y$ in $X \otimes X^*$ satisfies 
$$\frac{\FPdim \C}{(\FPdim X)^2}\, N^Y_{X, X^*} = \FPdim Y \, \FPdim \C_{pt} = \FPdim Y.$$
Then $$\frac{\FPdim \C}{(\FPdim X)^2(\FPdim Y)^2}\, N^Y_{X, X^*} = \frac{1}{\FPdim Y}.$$
We thus arrive to a contradiction, because the left hand side of this equation is an integer, while the right hand side is not. This shows that the graph $\Gamma(\C)$ must be connected, as claimed. 
\end{proof}

\subsection{Proof of main results on the graph of a braided fusion category}\label{pfs} We now proceed to apply the results in the previous subsections in order to give a proof of Theorems \ref{main-braidedgt} and \ref{main-nondeg-wgt}  on the graphs of braided fusion categories.

\begin{proof}[Proof of Theorem \ref{main-nondeg-wgt}] Let $\C$ be a non-degenerate integral braided fusion category. If every Tannakian subcategory of $\C$ is pointed, then $\Gamma(\C)$ and hence also $\Delta(\C)$, has at most two connected components, by Proposition \ref{nd-connect} (c. f. Remark \ref{graphs-sets}).  We may therefore assume that $\C$ contains a Tannakian subcategory $\E \cong \Rep G$, where $G$ is a non-abelian finite group.  Then the category $\C$ is an equivariantization $\C \cong \D^G$, where $\D = \C_G$ is the associated braided $G$-crossed fusion category (see Subsection \ref{tann-subc}).

It follows from Theorem \ref{thm-leq3} that the graph $\Delta(\C^G)$ has at most three connected components. This proves part (i) of the theorem.
Moreover, if $\C$ is solvable, then the group $G$ is solvable as well. Therefore the graph $\Delta(\C)$ has at most two connected components in view of Theorem \ref{equiv-solv}. This proves part (ii) and finishes the proof of the theorem.
 \end{proof}

\begin{proof}[Proof of Theorem \ref{main-braidedgt}] Let $\C$ be a braided group-theoretical fusion category. In view of a result of Naidu, Nikshych and Witherspoon \cite[Theorem 7.2]{NNW} there exists a Tannakian subcategory $\E \cong \Rep G$ of $\C$ such that the de-equivariantization $\C_G$ is pointed. In particular, $\C$ is an equivariantization of a pointed fusion category. As a consequence of Theorem \ref{eq-pted}, we get part (i) of the theorem, since the prime graph and the common divisor graph of $\C$ have the same number of connected components. Let us show part (ii). Let $\C$ be a group-theoretical non-degenerate braided fusion category and let $\E \cong \Rep G$ be a Tannakian subcategory of $\C$ such that $\C_G$ is pointed. Then $\C_G \cong \C(\Gamma, \omega)$, for some finite group $\Gamma$ and $\omega \in H^3(\Gamma, k^*)$, and therefore $\Z(\C_G) \cong \Z(\C(\Gamma, \omega)) \cong \Rep D^\omega(\Gamma)$.  In addition there is an equivalence of braided fusion categories $\C \boxtimes (\C_G^0)^{\rev} \cong \Z(\C_G)$, and the neutral homogeneous component $\C_G^0 \subseteq \C_G$ is also pointed; see Subsection \ref{tann-subc}. This implies that $\Delta(\C) = \Delta(D^\omega(\Gamma))$ and part (ii) follows from  Theorem \ref{tqd}.
\end{proof}

\section{Application}\label{application}

In this section we shall give a proof of Theorem \ref{appl}. Our proof relies on Proposition \ref{npt-connected}. 
We shall also need the following lemma.

\begin{lemma}\label{sl-deg} Let $\C$ be a braided fusion category. Suppose that
$\C$ contains no nontrivial non-degenerate or Tannakian subcategories. Then $\C$ is
slightly degenerate.
\end{lemma}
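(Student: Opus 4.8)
The plan is to read off the structure of $\C$ from its M\"uger center $\C'$. Since $\C'$ is a symmetric fusion category, the super-Tannakian classification recalled in Section \ref{braided} gives a finite group $G$ and a central element $u \in G$ with $u^2 = e$ such that $\C' \cong \Rep(G, u)$ as braided fusion categories, and the even part $(\C')_1 \cong \Rep G/(u)$ is the maximal Tannakian subcategory of $\C'$. The first observation I would make is that, carrying the braiding inherited from $\C$, this $(\C')_1$ is itself a Tannakian fusion subcategory of $\C$, so the hypothesis on Tannakian subcategories applies to it.

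Next I would exploit the two hypotheses in turn. Because $\C$ contains no nontrivial Tannakian subcategory, the subcategory $(\C')_1$ must be trivial, i.e. $\Rep G/(u) \cong \vect$; this forces $G = (u)$, so $G$ is either trivial or cyclic of order two. Consequently $\C'$ is one of the only two symmetric fusion categories whose maximal Tannakian subcategory is trivial, namely $\C' \cong \vect$ (when $u = e$) or $\C' \cong \svect$ (when $u \neq e$). Finally I would rule out the first alternative: if $\C' \cong \vect$ then $\C$ is non-degenerate, and taking $\C$ as a fusion subcategory of itself exhibits a nontrivial non-degenerate subcategory, contradicting the assumption. Hence $\C' \cong \svect$, that is, $\C$ is slightly degenerate.

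Since the whole argument is a direct application of the super-Tannakian structure theorem together with the two hypotheses, I do not expect a genuine obstacle. The only points requiring care are verifying that the maximal Tannakian subcategory of the M\"uger center is really Tannakian \emph{inside} $\C$ (so that the hypothesis bites), and the bookkeeping that eliminates $\C' \cong \vect$ by viewing $\C$ as a subcategory of itself; the degenerate boundary case $\C \cong \vect$ is trivial and may be set aside.
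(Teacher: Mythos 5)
Your argument is correct. Note, however, that the paper does not actually prove this lemma: its entire proof is a citation to \cite[Lemma 7.1]{witt-wgt}, so what you have written is a self-contained replacement for that external reference rather than a variant of an argument appearing in the text. Your route --- pass to the M\"uger center $\C'$, invoke Deligne's super-Tannakian classification $\C' \cong \Rep(G,u)$, observe that the even part $\Rep G/(u)$ is a Tannakian subcategory of $\C$ itself (since its braiding is inherited from $\C$), kill it using the hypothesis on Tannakian subcategories to force $G = (u)$, and then use the hypothesis on non-degenerate subcategories to exclude $\C' \cong \vect$ --- is precisely the standard proof of the cited result, and all the ingredients you use (the classification $\C' \cong \Rep(G,u)$ and the identification of $\Rep G/(u)$ as the maximal Tannakian subcategory) are already recalled in Section \ref{braided} of the paper, so nothing external is needed. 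The two points you flag as needing care are the right ones: the Tannakian property of $(\C')_1$ inside $\C$ is immediate because a fusion subcategory of the center carries the restricted braiding, and the case $\C \cong \vect$ must indeed be excluded (as stated, the lemma is vacuously violated by $\vect$, whose M\"uger center is $\vect$, not $\svect$); this implicit nontriviality convention is harmless, since in the paper's application (the proof of Theorem \ref{appl}) the category in question is not pointed, in particular nontrivial.
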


\begin{proof} The statement of the lemma is contained in \cite[Lemma 7.1]{witt-wgt}.  \end{proof}

\begin{proof}[Proof of Theorem \ref{appl}] Let $\C$ be a weakly integral braided fusion category satisfying the assumptions of the theorem. That is, the Frobenius-Perron dimension of each simple object of $\C$ is a $p_i$-power, where $p_1, \dots, p_r$ are prime numbers, $r \geq 1$.

The proof follows the lines of the proof of Theorem 7.2 of \cite{witt-wgt}. We argue by induction on $\FPdim \C$. Observe that the assumption on the dimensions of simple objects of $\C$ is also satisfied by any fusion subcategory and, in view of \eqref{dim-equiv}, also by any de-equivariantization of $\C$.

If $\FPdim \C = 1$ there is nothing to prove. If $\C_{ad} \subsetneq \C$, then $\C$ is a $U(\C)$-extension of its adjoint subcategory $\C_{ad}$, where $U(\C)$ denotes the universal grading group of $\C$. Since $\C$ is braided, the group $U(\C)$ is abelian. We may inductively assume that $\C_{ad}$ is weakly group-theoretical (respectively, solvable), and then $\C$ is weakly group-theoretical (respectively, solvable) as well, by \cite[Proposition 4.5]{ENO2}. 

\medbreak We may thus assume that $\C = \C_{ad}$. In particular, $\C$ is integral and not pointed. Furthermore,  we may also assume that the graph $\Delta(\C)$ consists of at least two isolated points; otherwise the Frobenius-Perron dimensions of simple objects of $\C$ are powers of a fixed prime number and the theorem follows from \cite[Theorem 7.2]{witt-wgt}.

\medbreak We shall show below that $\C$ contains a nontrivial Tannakian
subcategory $\E$. Hence  $\E \cong \Rep G$ for some finite group $G$.  Since $\C$ contains $\Rep G$ as a Tannakian subcategory, then $\C$ is an equivariantization of a $G$-crossed braided fusion category $\C_G$.
 Since $\FPdim \C^0_G \leq \FPdim \C_G  = \FPdim \C / |G| < \FPdim \C$ and $\C_G^0$ is a braided fusion category, then it follows by induction that $\C_G^0$ is weakly group-theoretical. Then $\C$ is weakly group-theoretical as well \cite[Proposition 4.1]{witt-wgt}.

Furthermore, if $\C$ satisfies assumptions (a) or (b), then it follows from Formula \eqref{dim-equiv}, that  so does the fusion category $\C_G$ (which is not necessarily braided), and then the same is true for the fusion subcategory $\C_G^0 \subseteq \C_G$. Hence $\C_G^0$ is solvable by induction. On the other hand, assumption (a) implies that the prime graph $\Delta(G)$ consists of at most two isolated points, while assumption (b) implies that the connected components of the graph $\Delta(G)$ are among $\{p_1\}, \dots, \{p_r\}$, where $p_i \neq 2, 3, 5$ or $7$, for all $i = 0, \dots, r$. In any case, we get that the group $G$ is solvable, by \cite[Corollary 1]{manz-sw}. Therefore $\C$ is solvable, by \cite[Proposition 4.1]{witt-wgt}.

\medbreak Suppose on the contrary that $\C$ contains no nontrivial Tannakian subcategory. 
We may assume that $\C$ contains no nontrivial proper non-degenerate subcategories. In fact, if $\vect \subsetneq \D \subsetneq \C$ is  non-degenerate, then  $\C \cong \D \boxtimes \D'$.  Since $\FPdim \D, \FPdim \D' <
\FPdim \C$, it follows by induction that $\D$ and $\D'$ are both weakly group-theoretical and therefore so is $\C$.
Similarly, if $\C$ satisfies (a) or (b), then so do $\D$ and $\D'$. Hence $\D$ and $\D'$ are both solvable, by induction,  and thus $\C$ is solvable too.

\medbreak Suppose next that $\C$ is itself non-degenerate. Since $\C = \C_{ad}$, then $\C_{pt} = \vect$ and Proposition \ref{npt-connected} implies that the graph $\Delta(\C)$ is connected, which is a contradiction.

\medbreak It remains to consider the case where $\C$ contains no nontrivial Tannakian or non-degenerate subcategories. In this case, Lemma \ref{sl-deg} implies that $\C$ is slightly degenerate. Since $\Delta(\C)$ consists of at least two isolated points, then $\C$ has a  simple object of $p$-power dimension for some odd prime number $p$. This contradicts the assumption that  $\C$ has no nontrivial Tannakian subcategories, in view of \cite[Proposition 7.4]{ENO2}. The proof of the theorem is now complete.  \end{proof}

\end{document}